\newcommand{\QQ}{\mathbb{Q}}
\newcommand{\RR}{\mathbb{R}}
\newcommand{\PP}{\mathbb{P}}
\newcommand{\ZZ}{\mathbb{Z}}
\newcommand{\CE}{\mathcal{E}}
\newcommand{\CO}{\mathcal{O}}
\newcommand{\alb}{{\rm alb}}
\newcommand{\Pic}{{\mathrm{Pic}}}
\newcommand{\ch}{{\mathrm{ch}}}
\newcommand{\nt}{{\mathrm{nt}}}
\newcommand{\nti}{{\mathrm{int}}}
\newcommand{\vol}{{\mathrm{vol}}}
\newcommand{\rounddown}[1]{{\lfloor #1 \rfloor}}
\begin{document}
\title[Relative Clifford inequality]{Relative Clifford inequality for varieties fibered by curves}

\author{Tong Zhang}
\date{\today}

\address{Department of Mathematics, Shanghai Key Laborotary of PMMP, East China Normal University, 500 Dongchuan Road, Shanghai 200241, People's Republic of China}

\address{Department of Mathematical Sciences, Durham University, Durham, DH1 3LE, United Kingdom}

\email{tzhang@math.ecnu.edu.cn, mathtzhang@gmail.com}

\begin{abstract}
	We prove a sharp relative Clifford inequality for relatively special divisors on varieties fibered by curves. It generalizes the classical Clifford inequality about a single curve to a family of curves. It yields a geographical inequality for varieties of general type and Albanese-fibered by curves, extending the work of Horikawa, Persson, and Xiao in dimension two to arbitrary dimensions. We also apply it to deduce a slope inequality for some arbitrary dimensional families of curves. It sheds light on the existence of a most general Cornalba-Harris-Xiao type inequality for families of curves.
	
	The whole proof is built on a new tree-like filtration for nef divisors on varieties fibered by curves. One key ingredient of the proof is to estimate the sum of all admissible products of nef thresholds with respect to this filtration.
\end{abstract}

\maketitle

\setcounter{tocdepth}{1}
\tableofcontents

%%%%%%%%%%%%%%%%%%%%%%%%%%% Theorems %%%%%%%%%%%%%%%%%%%%%%%%%%%%%%%%%

\theoremstyle{plain}
\newtheorem{theorem}{Theorem}[section]
\newtheorem{lemma}[theorem]{Lemma}
\newtheorem{coro}[theorem]{Corollary}
\newtheorem{prop}[theorem]{Proposition}
\newtheorem{defi}{Definition}[section]
\newtheorem{conj}[theorem]{Conjecture}
\newtheorem{ques}[theorem]{Question}

\newtheorem*{conj*}{Conjecture}
\newtheorem*{ques*}{Question}

\theoremstyle{remark}
\newtheorem{remark}{Remark}[section]
\newtheorem{assumption}[theorem]{Assumption}
\newtheorem{example}[theorem]{Example}

\numberwithin{equation}{section}

%%%%%%%%%%%%%%%%%%%%%%%%%%%%%%%%%%%%%%%%%%%%%%%%%%%%%%%%%%%%%

\section{Introduction}
The classical Clifford inequality states that for a special divisor $L$ on a smooth projective curve $X$ of genus $g \ge 2$, we have
\begin{equation} \label{classicalClifford}
	h^0(X, L) \le \frac{1}{2} \deg L + 1.
\end{equation}
Starting from this inequality is the rich theory of special divisors on algebraic curves. We refer to the reader to \cite[Chapter III]{ACGH_Geometry_curves1} for comprehensive details regarding this theory. 

The goal of this paper is to establish a relative (or family) version of \eqref{classicalClifford}. Suppose that $f: X \to Y$ is a fibration by curves of genus $g \ge 2$ between two varieties $X$ and $Y$, i.e., $X$ is a \emph{relative curve} over $Y$. In this paper, we prove a version of the Clifford inequality for numerically $f$-special divisors on $X$ (see Definition \ref{specialdivisor}). As a consequence, if $Y$ is of maximal Albanese dimension, then for a nef and numerically $f$-special divisor $L$ on $X$, we prove a sharp relative Clifford inequality between the $f$-continuous rank $h^0_f(X, \CO_X(L))$ of $L$ (see Definition \ref{continuousrank}) and the volume of $L$. As applications, we obtain
\begin{itemize}
	\item [(i)] a geographical inequality for varieties Albanese-fibered by curves;
	\item [(ii)] a slope inequality for (semi-stable) families of curves over abelian varieties, toric Fano varieties, and varieties covered by one of them.
\end{itemize}

Throughout this paper, we work over an algebraically closed field of characteristic zero. All varieties in this paper are assumed to be projective.

\subsection{Geographical slope $K^n / \chi$: the initial motivation}
Before introducing the relative Clifford inequality, we would like to introduce one of its applications to the geographical problem first. In fact, it is this application that motivates us to seek a general Clifford inequality.

The geography of varieties (usually of general type) is an important area in algebraic geometry. One central problem in this area concerns the distribution of birational invariants of varieties, with the purpose of applying it to characterize the geometry of varieties themselves.

A typical example reflecting this philosophy is the following conjecture of Reid \cite{Reid_Quadrics} concerning the geography of surfaces of general type. %This conjecture predicts that \textit{for $g = 2$, $3$, $\cdots$, there exist rational numbers $a_g$ and $b_g$ with
%$$
%a_2 < a_3 < \cdots \quad \mbox{and} \quad  \lim\limits_{g \to \infty} a_g = 4
%$$
%such that for every (smooth) surface $X$ of general type, if $K_X^2 \le a_g \chi(X, \CO_X) - b_g$, then $X$ has a pencil of curves of genus at most $g$.} 
\begin{conj*} [Reid]
	For $g = 2$, $3$, $\cdots$, there exist rational numbers $a_g$ and $b_g$ with
	$$
	a_2 < a_3 < \cdots \quad \mbox{and} \quad  \lim\limits_{g \to \infty} a_g = 4
	$$
	such that for every (smooth) surface $X$ of general type, if $K_X^2 \le a_g \chi(X, \CO_X) - b_g$, then $X$ has a pencil of curves of genus at most $g$. 
\end{conj*}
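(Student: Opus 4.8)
The plan would be to argue by contrapositive, and — since no fibration is available on an arbitrary surface of general type — to organise the argument by Albanese dimension. So assume $X$ is a minimal surface of general type which carries \emph{no} fibration onto a smooth curve with general fibre of genus $\le g$, and try to prove $K_X^2 > a_g\,\chi(X,\CO_X) - b_g$ for suitable rationals $a_g \nearrow 4$ and $b_g \ge 0$. If the Albanese map of $X$ is generically finite onto its image, this is free: one invokes the Severi-type inequality $K_X^2 \ge 4\,\chi(X,\CO_X)$ (Pardini), which already beats any $a_g < 4$ with $b_g \ge 0$, so such surfaces are discarded at once.

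The substantive case is when the Albanese image is a curve, i.e. $X$ carries the Albanese fibration $f\colon X \to B$ with $q := q(X) \ge 1$; since $X$ is minimal of general type, $f$ is automatically relatively minimal and its general fibre $F$ has genus $b \ge 2$, and by hypothesis $b \ge g+1$. I would then feed this into the slope inequality $K_{X/B}^2 \ge \tfrac{4b-4}{b}\deg f_*\omega_{X/B}$ — which in the present framework is exactly the relative Clifford inequality applied to $L$ a relative canonical divisor, whose restriction to a fibre is the (special) canonical divisor of the curve, hence numerically $f$-special — and combine it with the identities $K_X^2 = K_{X/B}^2 + 8(q-1)(b-1)$ and $\chi(X,\CO_X) = \deg f_*\omega_{X/B} + (q-1)(b-1)$. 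Using $(q-1)(b-1)\ge 0$ and $8 \ge \tfrac{4b-4}{b}$ one gets $K_X^2 \ge \tfrac{4b-4}{b}\,\chi(X,\CO_X) \ge \tfrac{4g}{g+1}\,\chi(X,\CO_X)$, so $a_g := \tfrac{4g}{g+1}$ (strictly increasing with limit $4$) and $b_g := 0$ close this case.

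The point of the present paper, as I read the abstract, is that this second step goes through verbatim in all dimensions: for $X$ of general type whose Albanese map fibers it by curves over a base $Y$ of maximal Albanese dimension, the relative Clifford inequality bounds the $f$-continuous rank of a relative canonical divisor by a constant times $\vol(K_X) = K_X^n$, while the $f$-continuous rank is converted into $\chi$ by generic vanishing on $Y$; this is precisely the higher-dimensional geographical inequality announced in the introduction, and the sharpness of the relative Clifford inequality is what pins down the optimal slope constant whenever a fibration is present.

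The main obstacle — and the reason the full conjecture remains open — is the residual case $q(X) = 0$: there is no Albanese fibration and no a priori low-genus pencil, so one must instead manufacture a pencil from a (pluri)canonical system or a movable class and estimate $K_X^2$ against $\chi = 1 + p_g$ directly. The genuine difficulty is to control simultaneously the genus of such a pencil and the constant, so that the bound degrades to $4\chi$ only in the limit $g \to \infty$; this is known only in special regimes (surfaces near the Noether line, after Horikawa and Persson; low-genus pencils, after Xiao). The relative Clifford inequality does not by itself produce the missing pencil when $q = 0$, so I would expect any complete proof to require genuinely new input precisely there.
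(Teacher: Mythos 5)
The statement you were asked to prove is Reid's conjecture, which the paper itself presents as an \emph{open} conjecture: it is not proved there, and indeed cannot be expected to be. What the paper does is explain how the work of Horikawa, Persson and Xiao verifies the conjecture \emph{for irregular surfaces} (the paper's sentence ``their results have verified Reid's conjecture for irregular surfaces (e.g., taking $a_g = \frac{4g}{g+1}$ and $b_g = 1$)''), and then generalizes that irregular case to higher dimensions via Theorem~\ref{main2}. Your write-up is honest and essentially correct on exactly this point: you correctly dispatch the case of generically finite Albanese map via Pardini's Severi inequality $K^2 \ge 4\chi$, you correctly derive $K_X^2 \ge \frac{4g}{g+1}\chi$ from the slope inequality in the case of an Albanese pencil of genus $\ge g+1$ (which is precisely the route behind \eqref{HPX} in the paper), and you correctly flag the $q(X)=0$ case as the genuine open residue that the relative Clifford framework cannot touch, because there is no Albanese fibration to feed into it.

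Two small remarks. First, your choice $b_g := 0$ is fine for the irregular subcases you treat; the paper's $b_g = 1$ is also valid and is merely a harmless slack term. Second, be slightly careful stating that the general Albanese fibre has genus $b \ge 2$ ``since $X$ is minimal of general type'': the reason $b \ge 2$ here is really that $b \ge g+1 \ge 3$ by your contrapositive hypothesis (no genus $\le g$ pencil), which is what lets you apply the Cornalba--Harris--Xiao slope inequality without worrying about an elliptic Albanese fibre. With that, there is nothing to correct: you have not proved the conjecture, but neither does the paper, and your account of which cases are covered (by the Severi inequality, the slope inequality, and Theorem~\ref{main2} in higher dimension) and which case remains open agrees with the paper's own discussion.
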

The importance of this conjecture not only comes from the geography, it also implies several structural results about fundamental groups of certain surfaces (see \cite{Reid_Quadrics} for details). %So far, this conjecture is only known for irregular surfaces and hyperelliptic surfaces (see \cite{Horikawa_V,Persson_Chern,Xiao_Slope,Xiao_Hyper}).

Reid's conjecture can be naturally generalized to higher dimensions. Without knowing the exact limit of $a_g$ a priori, at least it is natural to ask:

\medskip
%\begin{ques} \label{Reidtype}
\emph{For any $n > 2$, does a geographical inequality $K_X^n \le a_g \chi(X, \omega_X) - b_g$ for an $n$-dimensional variety $X$ of general type imply the existence of a covering family of curves on $X$ with the genus at most $g$?}
%\end{ques}
\medskip

Notice that questions of this type but between $K_X^n$ and $p_g(X)$ have already been considered recently by J. Chen and Lai in \cite[\S 6]{Chen_Lai_Small} for small $g$. %Thus finding a higher dimensional generalization of the aforementioned result of Horikawa, Persson and Xiao may be a first attempt towards this problem.

In general, understanding the bound of the geographical slope $K^n / \chi$ is a very fundamental problem for algebraic varieties. The study of $K^2 / \chi$ for surfaces of general type dates back to the work of Italian school. Regarding this problem for irregular varieties, Pardini \cite{Pardini_Severi} proved the classical Severi inequality that $K_X^2 \ge 4\chi(X, \omega_X)$ for a minimal surface $X$ of general type and of maximal Albanese dimension. Recently, a generalized Severi inequality was obtained by Barja \cite{Barja_Severi} and independently by the author \cite{Zhang_Severi}, which states that $K_X^n \ge 2n! \chi(X, \omega_X)$ for an $n$-dimensional minimal variety $X$ of general type and of maximal Albanese dimension. It answers a question raised by Mendes Lopes and Pardini \cite[\S 5.2 (c)]{Lopes_Pardini_Geography}. In their very recent paper \cite{Barja_Pardini_Stoppino}, Barja, Pardini and Stoppino gave more inequalities of this type for varieties of maximal Albanese dimension under various assumptions. 

However, for most of the irregular varieties, the Albanese map is actually \emph{of fiber type}  (i.e., non-maximal Albanese dimension). From the perspective of Reid's conjecture, the lower bound of $K^n / \chi$  for these varieties should be an increasing function of the geometric genus (or the canonical volume) of the Albanese fiber, but such a desired lower bound seems far from being known for any $n>2$.\footnote{See \cite{Lopes_Pardini_Geography} for a survey of results for $n=2$ as well \cite{Roulleau_Urzua_Chern,Vidussi_Slope} for some recent results.}

Our first result in this paper addresses the above questions. Given any variety $X$, let $\alb_X: X \to A$ be the Albanese map of $X$. By the Stein factorization, the map $\alb_X$ factors through a fibration $f: X \to Y$ with $Y$ normal. We call $f$ the \emph{fibration induced by the Albanese map of $X$}. A fiber of $f$ is called an \emph{Albanese fiber}.

\begin{theorem} \label{main2}
	Let $X$ be an $n$-dimensional minimal variety of general type.\footnote{Throughout this paper, we refer the reader to Section \ref{minimality} for the notion of minimality.} Suppose that the Albanese map of $X$ induces a fibration of curves of genus $g \ge 2$. Then
	$$
	K_X^n \ge 2 n! \left(\frac{g-1}{g+n-2}\right)  \chi(X, \omega_X).
	$$
	In particular, if $g \ge n$, we have 
	$$
	K_X^n \ge n! \chi(X, \omega_X).
	$$
\end{theorem}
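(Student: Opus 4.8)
The plan is to obtain Theorem \ref{main2} as a direct consequence of the sharp relative Clifford inequality. Let $\alb_X \colon X \to A$ be the Albanese map and $f \colon X \to Y$ the fibration it induces, so that $f$ has connected fibres which over a dense open subset of $Y$ are smooth curves of genus $g \ge 2$, and $Y \to A$ is finite; in particular $Y$ has maximal Albanese dimension and $\dim \alb_X(X) = \dim Y = n - 1$. Since $X$ is minimal of general type, $K_X$ is nef and big. For a general fibre $F$, adjunction together with the triviality of $\mathcal{N}_{F/X}$ gives $\CO_X(K_X)|_F \cong \omega_F$, the canonical bundle of the genus-$g$ curve $F$, which satisfies $h^0(F,\omega_F) = g = \tfrac12\deg\omega_F + 1$; hence $K_X$ is nef and numerically $f$-special in the sense of Definition \ref{specialdivisor}, so $L = K_X$ is an admissible input for the relative Clifford inequality.

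First I would feed $L = K_X$ into the sharp relative Clifford inequality (the paper's main result), which applies because $Y$ is of maximal Albanese dimension. This should produce the clean bound
\[
h^0_f\bigl(X, \CO_X(K_X)\bigr) \ \le\ \frac{1}{2\,n!}\cdot\frac{g + n - 2}{g - 1}\, K_X^n ,
\]
in which the constant is exactly the reciprocal of the one appearing in Theorem \ref{main2} — this is precisely why the relative Clifford inequality has to be sharp with this constant. Should the general inequality carry a lower-order correction term, I would remove it by pulling $K_X$ back along the étale covers $X_m \to X$ obtained by base-changing $\alb_X$ along the multiplication maps $[m]\colon A \to A$: there $h^0_f$, $K_X^n$ and $\chi(\omega_X)$ all scale by $m^{2q}$ (with $q = \dim A$) while the fibre genus $g$ is unchanged, so dividing by $m^{2q}$ and letting $m \to \infty$ kills any term of lower order.

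Next I would bound $h^0_f(X, \CO_X(K_X))$ below by $\chi(X, \omega_X)$. By definition the $f$-continuous rank is the generic value of $h^0\bigl(X, \omega_X \otimes \alb_X^{\ast} P\bigr)$ as $P$ runs through the relevant subtorus of $\Pic^0$; since $\dim \alb_X(X) = n - 1$, the generic vanishing theorem of Green--Lazarsfeld gives $H^i\bigl(X, \omega_X \otimes \alb_X^{\ast} P\bigr) = 0$ for general $P$ and every $i \ge 2$. As $\alb_X^{\ast} P$ is numerically trivial we have $\chi(X, \omega_X \otimes \alb_X^{\ast} P) = \chi(X, \omega_X)$, so for general $P$
\[
\chi(X, \omega_X) \ =\ h^0\bigl(X, \omega_X \otimes \alb_X^{\ast} P\bigr) - h^1\bigl(X, \omega_X \otimes \alb_X^{\ast} P\bigr) \ \le\ h^0\bigl(X, \omega_X \otimes \alb_X^{\ast} P\bigr),
\]
hence $\chi(X, \omega_X) \le h^0_f\bigl(X, \CO_X(K_X)\bigr)$. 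Combining this with the previous display yields $K_X^n \ge 2\,n!\,\tfrac{g-1}{g+n-2}\,\chi(X, \omega_X)$, and the ``in particular'' is immediate since $2\,n!\,\tfrac{g-1}{g+n-2} \ge n!$ is equivalent to $2(g-1) \ge g + n - 2$, i.e.\ to $g \ge n$.

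The hard part is entirely contained in the relative Clifford inequality itself, proved earlier via the tree-like filtration and the estimate for sums of admissible products of nef thresholds; in this deduction it is a black box. The only points in the argument proper that need care are: checking that the $\Pic^0$ parametrising the $f$-continuous rank really does pull back from the abelian variety through which $\alb_X$ factors, so that Green--Lazarsfeld applies directly (if it instead comes from $\mathrm{Alb}(Y)$, one uses that $X \to Y \hookrightarrow \mathrm{Alb}(Y)$ still has $(n-1)$-dimensional image, so the vanishing for $i \ge 2$ persists); and, when $X$ has canonical singularities, passing to a resolution while keeping track of the birational invariants $K_X^n$ and $\chi(\omega_X)$ and of the fibration structure. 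These are routine.
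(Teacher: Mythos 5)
Your proposal is correct and follows essentially the same route as the paper: apply Theorem \ref{main} with $L=K_X$ and $d=2g-2$, $\varepsilon=1$, to get $K_X^n\ge\frac{2n!(g-1)}{g+n-2}\,h^0_f(X,\omega_X)$, then use Green--Lazarsfeld generic vanishing to bound $h^0_f(X,\omega_X)\ge\chi(X,\omega_X)$. The only cosmetic difference is that the paper applies generic vanishing directly to the minimal (hence rational-singularity) model via \cite{Green_Lazarsfeld_GenericVanishing,Hacon_Kovacs_Generic_Vanishing} rather than passing to a resolution, and it does not need your auxiliary limiting step since Theorem \ref{main} already has no correction term.
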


The surface case (i.e., $n=2$) of Theorem \ref{main2} is already known. Horikawa \cite[Theorem 2.1]{Horikawa_V} and Persson \cite[Proposition 2.12]{Persson_Chern} first proved that
\begin{equation} \label{HPX}
	K_X^2 \ge 4 \left(\frac{g-1}{g}\right) \chi(X, \omega_X)
\end{equation}
for minimal surfaces with hyperelliptic Albanese fibrations of genus $g \ge 2$. For general fibrations, \eqref{HPX} is due the work of Xiao \cite[Theorem 2]{Xiao_Slope}. In particular, their results have verified Reid's conjecture for irregular surfaces (e.g., taking $a_g = \frac{4g}{g+1}$ and $b_g = 1$). On the other hand, although not satisfying the exact assumption, Theorem \ref{main2} still makes sense when $X$ is a curve of genus $g \ge 2$ (i.e., $n=1$), which says that $\deg K_X \ge 2 \chi(X, \omega_X)$.

A notable fact is that the geographical slope in Theorem \ref{main2} also has the same limiting behavior as in \eqref{HPX}: for a fixed integer $n \ge 2$, the coefficients in Theorem \ref{main2} form an increasing sequence indexed by $g$, and we also have
$$
\lim\limits_{g \to \infty} 2 n! \left(\frac{g-1}{g+n-2}\right)  = 2n!
$$
in which the limit $2n!$ appears in the generalized Severi inequality. This is more than a coincidence. In fact, such a phenomenon has appeared already in Reid's conjecture for surfaces where the limit is $4 = 2 \cdot 2!$.\footnote{A similar asymptoticity for relative canonical divisors (namely $K^2_{X/\PP^1} / \deg (f_* \omega_{X/\PP^1})$) is very crucial in Pardini's proof of the classical Severi inequality in \cite{Pardini_Severi}.} %\footnote{It is even known by Reid \cite{Reid_Quadrics} that the classical Severi inequality is a consequence of his conjecture.}  
From this point of view, Theorem \ref{main2} suggests that it is likely that a correct limit of $a_g$ in the aforementioned Reid's type question is $2n!$ for any $n \ge 2$, and Theorem \ref{main2} has confirmed this when the variety carries an Albanese pencil of curves. 
%We refer the reader to Question \ref{finalquestion} for a concrete analogue of Reid's conjecture for irregular varieties in any dimension.

Previously, few results in this direction were known. Under the same assumption but further assuming $X$ is Gorenstein, Barja \cite[Theorem 4.1 and Remark 4.5]{Barja_Severi} proved that $K_X^n \ge 2(n-1)! \chi(X, \omega_X)$.\footnote{The author was informed by Barja that his result holds also in the $\QQ$-Gorenstein case.} His result is implied by Theorem \ref{main2}, because $2 n! \left(\frac{g-1}{g+n-2}\right) \ge 2(n-1)!$ when $g \ge 2$. In fact, Theorem \ref{main2} is sharper once $g \ne 2$. %We also allow $K_X$ being $\QQ$-Cartier here. Thus Theorem \ref{main2} sits in the general setting in the sense of the minimal model program. 

\begin{remark}
	Comparing Theorem \ref{main2} with \eqref{HPX}, we may wonder whether the better inequality $K_X^n \ge 2n! \left(\frac{g-1}{g}\right) \chi(X, \omega_X)$ holds for any $n \ge 2$. Unfortunately, based on the communication with us by Hu \cite{Hu2016}, we construct examples in Section \ref{examples} indicating that the above expectation is not true for any $n > 2$.
\end{remark}

\begin{remark}
	For a minimal surface $X$ of general type and Albanese-fibered by curves, we always have $K_X^2 \ge 2 \chi(X, \omega_X)$. Earlier than \eqref{HPX}, this weaker result was already stated and proved by Bombieri \cite[Lemma 14]{Bombieri_Canonical} in 1970s. The second inequality in Theorem \ref{main2} can be viewed as a higher dimensional analogue of this result, but we have to put a lower bound on the genus $g$. Otherwise the result would fail. In Section \ref{examples}, we provide examples in any dimension $n > 2$ with $K_X^2 < n! \chi(X, \omega_X)$. In those examples, the genus $g$ of Albanese fibers is at most $\frac{n+1}{2}$.
\end{remark}

\begin{remark}
	By the Miyaoka-Yau inequality, we know that $K^2_X \le 9 \chi(X, \omega_X)$ for complex surfaces of general type. We may ask whether there exists an upper bound of $K^n_X / \chi(X, \omega_X)$ for $n$-dimensional varieties $X$ ($n>2$) that are Albanese-fibered by curves. In Section \ref{unbounded}, we will show that $K^n_X / \chi(X, \omega_X)$ is unbounded from above in general, even when we assume that $\chi(X, \omega_X) > 0$. Notice that it does occur that $\chi(X, \omega_X) \le 0$ for varieties $X$ of dimension $n>2$ that are Albanese-fibered by curves. 
\end{remark}

As a direct consequence, Theorem \ref{main2} helps complete a detailed picture of the geography of irregular $3$-folds of general type. Similar to surfaces, the geography of $3$-folds of general type is also a vast and important subject. Some fundamental problems have been studied. For example, the Noether and Noether type inequalities in the Gorenstein case have been proved by J. Chen and M. Chen in \cite{Chen_Chen_Noether} and Hu in \cite{Hu2013}, respectively. The geography for $3$-folds fibered over curves were also studied by Ohno \cite{Ohno_Slope}, Barja \cite{Barja_Lower_bounds} and many others in the literature.

The next theorem, lying in this area, gives an explicit description on the geographical slope $K^3 / \chi$ of irregular $3$-folds of general type with respect to the Albanese dimension and the genus, or the volume, of the Albanese fiber. In particular, it fills in the blank for irregular $3$-folds of general type that are Albanese-fibered by curves, about which little was known before.

\begin{theorem} \label{main4}
	Let $X$ be an irregular and minimal $3$-fold of general type.
	\begin{itemize}
		\item [(1)] If $X$ is of maximal Albanese dimension, then
		$$
		K_X^3 \ge 12 \chi(X, \omega_X).
		$$
		\item [(2)] If $X$ has Albanese dimension two with general Albanese fiber $F$ a smooth curve of genus $g \ge 2$, then
		$$
		K_X^3 \ge 12 \left(\frac{g-1}{g+1}\right) \chi(X, \omega_X) = 12
		\left(\frac{\vol(K_F)}{\vol(K_F) + 4}\right) \chi(X, \omega_X).
		$$
		\item [(3)] If $X$ has Albanese dimension one with general Albanese fiber $F$ a smooth surface such that $(p_g(F), K^2_F) \ne (2, 1)$, then
		$$
		K_X^3 \ge 4 \left(\frac{K_F^2}{K_F^2+4}\right) \chi(X, \omega_X) = 4 \left(\frac{\vol(K_F)}{\vol(K_F) + 4}\right) \chi(X, \omega_X).
		$$
		Moreover, if $X$ is Gorenstein, then the inequality in (3) also holds when $(p_g(F), K^2_F) = (2, 1)$.
	\end{itemize}
\end{theorem}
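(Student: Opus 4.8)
The plan is to derive Theorem \ref{main4} as a case-by-case consequence of the main relative Clifford inequality (for cases where the Albanese map is of fiber type) combined with the generalized Severi inequality of Barja and the author (for the maximal Albanese dimension case). First, part (1) is immediate: a minimal $3$-fold of general type and of maximal Albanese dimension satisfies $K_X^3 \ge 2 \cdot 3! \cdot \chi(X, \omega_X) = 12 \chi(X, \omega_X)$ by the generalized Severi inequality cited in the introduction, so there is essentially nothing new to prove here beyond invoking that result.

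For part (2), the Albanese map induces a fibration $f: X \to Y$ by curves of genus $g \ge 2$ onto a surface $Y$ of maximal Albanese dimension (since $\dim Y = 2$ equals the Albanese dimension). This is exactly the hypothesis of Theorem \ref{main2} with $n = 3$, so we get $K_X^3 \ge 2 \cdot 3! \cdot \frac{g-1}{g+3-2} \chi(X, \omega_X) = 12 \cdot \frac{g-1}{g+1} \chi(X, \omega_X)$. The second equality is purely formal: $\vol(K_F) = \deg K_F = 2g-2$ for a smooth curve $F$ of genus $g$, so $\frac{g-1}{g+1} = \frac{(2g-2)/2}{(2g-2)/2 + 2} = \frac{\vol(K_F)}{\vol(K_F)+4}$. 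Thus part (2) is a direct specialization of Theorem \ref{main2}.

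For part (3), the Albanese map induces a fibration $f: X \to B$ over a smooth curve $B$ (the Albanese image is a curve, and after Stein factorization $B$ is smooth of positive genus), with general fiber $F$ a minimal surface of general type. Here the relevant tool is a slope-type inequality for the fibration $f: X \to B$: one wants $K_{X/B}^3 \ge \lambda \cdot \deg f_* \omega_{X/B}$ for the appropriate slope $\lambda$, together with the relation $\chi(X, \omega_X) = \deg f_* \omega_{X/B} + \chi(F,\omega_F)(g(B)-1)$ coming from Leray/Fujita-type decomposition, and Noether-type inequalities $K_F^2 \ge 2 p_g(F) - 4$ on the fiber. The coefficient $4 \cdot \frac{K_F^2}{K_F^2+4}$ should come out of the slope inequality with slope $\lambda = 4\,\frac{K_F^2}{K_F^2+4}\cdot\frac{1}{\text{(something)}}$; in dimension-two fiber this is the known Ohno--Barja--type slope bound for $3$-fold fibrations over curves, refined using the geometry of the fiber, and the excluded case $(p_g(F),K_F^2)=(2,1)$ is precisely where the fiber's canonical map behaves pathologically (the classical exception in Noether's inequality analysis), which is why Gorenstein-ness is needed to recover it. I would assemble this from the cited work of Ohno and Barja on $3$-folds fibered over curves, checking that the slope constant matches.

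The main obstacle is part (3): unlike (1) and (2), it is not a formal consequence of the headline theorems of this paper but requires a genuinely separate argument about surface-fibered $3$-folds, delicate handling of the $\chi$ versus $\deg f_*\omega_{X/B}$ comparison (including the non-Gorenstein singularities appearing in minimal models), and careful treatment of the exceptional fiber type $(p_g,K^2)=(2,1)$ where one must either exclude it or impose the Gorenstein hypothesis to run a more precise canonical-system argument on $F$. I would expect the bulk of the work, and the only real subtlety, to lie in verifying that the slope inequality produces exactly the constant $4\,\frac{K_F^2}{K_F^2+4}$ and in the boundary analysis for the $(2,1)$ case.
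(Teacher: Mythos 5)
Your proposal matches the paper's own (very brief) proof, which handles Theorem \ref{main4} purely by citation: part (1) from the generalized Severi inequality of Barja/Zhang, part (2) from Theorem \ref{main2} with $n=3$, and part (3) from the slope/geography results for surface-fibered $3$-folds. The one small discrepancy is in part (3): you propose to assemble the bound from Ohno--Barja-type slope inequalities, whereas the paper points specifically to \cite[Propositions 7.5, 7.6]{Zhang_3fold} for the general case and to \cite[Proposition 3.6]{Hu2013} for the Gorenstein case covering $(p_g(F),K_F^2)=(2,1)$; both sources live in the same circle of ideas (slope bounds for $3$-folds fibered over curves combined with Noether-type inequalities on the fiber), so this is a difference of reference rather than of method. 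You correctly identify that parts (1) and (2) are formal specializations and that (3) is the only part requiring input beyond the headline results of this paper.
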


Theorem \ref{main2} contributes to (2) here. (1) is simply from the generalized Severi inequality (the $n=3$ case of \cite[Corollary B]{Barja_Severi} or \cite[Theorem 1.1]{Zhang_Severi}). (3) is directly from \cite[Proposition 7.5, 7.6]{Zhang_3fold}, as well as \cite[Proposition 3.6]{Hu2013} for the Gorenstein case. %In Part (3), we use $K_F^2$ in the inequality instead of the genus $p_g(F)$ because sometimes $p_g(F)$ can be zero, but $K_F^2$ is always positive. 

It is natural to ask whether there is, in any dimension $n>3$, a similar result as Theorem \ref{main4} about $K^n/\chi$ for $n$-dimensional irregular varieties (of general type) of any given Albanese dimension that depends on the volume (or other birational invariants) of the corresponding Albanese fibers. We refer the reader to Question \ref{finalquestion} for an explicit description of our expectation.

\subsection{Relative Clifford inequality}
It turns out that Theorem \ref{main2} is just a special case of a more general result of this type for relatively special divisors which we will call the \emph{relative Clifford inequality} in the following. Before stating it, we first introduce some definitions.
\begin{defi} \label{continuousrank}
	Let $f: X \to Y$ be a morphism between two normal varieties $X$ and $Y$. For any divisor $L$ on $X$, we define the \emph{$f$-continuous rank of $L$} to be
	$$
	h^0_f (X, \CO_X(L)) := \min\{ \dim H^0 (X, \CO_X(L) \otimes f^*\alpha) | \alpha \in \Pic^0(Y) \}.
	$$
\end{defi}

The definition here is a relative version of the usual continuous linear series known as the form  $H^0(X, \CO_X(L) \otimes \alpha) \, (\alpha \in \Pic^0(X))$. The study of this continuous linear series dates back to the work of Mumford \cite{Mumford_Equation} and Kempf \cite{Kempf_Complex_Abelian_Varieties} on abelian varieties. For general irregular varieties, various properties of the continuous linear series have been intensively studied in the work of Pareschi and Popa \cite{Pareschi_Popa_Regularity,Pareschi_Popa_Regularity2}, Mendes Lopes, Pardini and Pirola \cite{Lopes_Pardini_Pirola_Continuous_Families_Divisors,Lopes_Pardini_Pirola_Brill_Noether}, and Barja \cite{Barja_Severi} during the past decades. Just to name a few. 

In fact, the notion here is compatible with all known ones. For any normal variety $X$, we may let $f$ be the structure morphism of $X$ (i.e., the morphism from $X$ to the spectrum of the base field). For this particular $f$, we simply have $h^0_f(X, \CO_X(L)) = h^0(X, \CO_X(L))$. Moreover, if we denote by $\alb_Y$ the Albanese map of $Y$ and let $a = \alb_Y \circ f$, then the definition here coincides with the continuous rank introduced by Barja in \cite{Barja_Severi}.\footnote{We did not use $\alb_Y$ literally in Definition \ref{continuousrank} because we would like to include fewer notations. Nevertheless, $\alb_Y$ is intrinsically involved in it.}

In the following, let $f: X \to Y$ be a fibration between two normal varieties $X$ and $Y$ with general fiber $C$ a smooth curve of genus $g \ge 2$.

\begin{defi} \label{specialdivisor}
	Let $L$ be a $\QQ$-Cartier Weil divisor on $X$. It is called \emph{$f$-special}, if $h^0(C, L|_C) > 0$ and $h^1(C, L|_C) > 0$. It is called \emph{numerically $f$-special}, if both $L|_C$ and $K_C-L|_C$ are pseudo-effective.
\end{defi}

The $f$-special divisor is a natural generalization of the classical special divisor. When $\dim X = 1$ and $f$ is the structure morphism of $X$ (i.e., $Y$ is a point), $f$-special just means special. 

It is clear that an $f$-special divisor is also numerical $f$-special. Notice that the assumption on $L$ in Definition \ref{specialdivisor} guarantees that $\deg(L|_C)$ is an integer, because a general $C$ lies in the smooth locus of $X$.

Now we state the relative Clifford inequality.

\begin{theorem} \label{main}
	Let $f: X \to Y$ be a fibration between two normal varieties $X$ and $Y$ with general fiber $C$ a smooth curve of genus $g \ge 2$. Assume that $\dim X = n \ge 2$ and that $Y$ is of maximal Albanese dimension. Let $L$ be a nef and numerically $f$-special divisor on $X$ with $d = \deg(L|_C) > 0$. Then
	$$
	h^0_f(X, \CO_X(L)) \le \left(\frac{1}{2n!} + \frac{n-\varepsilon}{n! d}\right) L^n.
	$$
	Here $\varepsilon = \frac{1}{2}$ when $d = 1$ or when $C$ is hyperelliptic, $L$ is $f$-special and $d$ is odd. Otherwise, $\varepsilon = 1$.	
\end{theorem}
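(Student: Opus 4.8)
The strategy is to reduce the statement to a relative version of the induction-on-dimension argument that underlies the generalized Severi inequality, powered by the key combinatorial input the abstract advertises (the tree-like filtration and the estimate on sums of admissible products of nef thresholds). Concretely, I would proceed as follows.

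\emph{Step 1: Reduce to a convenient model and to the ``$L$ relatively base-point free on $C$'' case.} After blowing up and passing to a smooth model of $X$ (and of $Y$), I may assume $f$ is a fibration of smooth curves away from a divisor, $Y$ is smooth of maximal Albanese dimension, and $L$ is a genuine Cartier divisor. The quantity $h^0_f$ is unchanged (it only depends on general fibers and on $\mathrm{Pic}^0(Y)$), while $L^n$ can only increase under the requisite modifications, so an inequality on a good model implies the one on $X$. I would then decompose $L$ into its relatively movable and relatively fixed parts over $Y$; the interesting estimate concerns the relatively movable part, and the relatively fixed part only helps (it adds to $L^n$ without increasing $h^0_f$ beyond what the movable part contributes). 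This is where the hypothesis $d=\deg(L|_C)>0$ and numerical $f$-speciality enter: on the general fiber $C$, Clifford's classical inequality \eqref{classicalClifford} gives $h^0(C,L|_C)\le \tfrac12 d+1$, and one wants a relative refinement of exactly this bound integrated over $Y$.

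\emph{Step 2: Build the tree-like filtration and run the induction.} Using the $f$-continuous evaluation maps, I would construct on $X$ a filtration of $\mathcal O_X(L)$ (or of $f_*\mathcal O_X(L)$ twisted by generic $\alpha\in\mathrm{Pic}^0(Y)$) whose successive quotients are controlled by nef divisors $D_i$ together with ``nef thresholds'' $a_i$ measuring how far one can push $L - \sum a_j D_j$ and stay nef; the ``tree-like'' shape accounts for the branching caused by the continuous twists and by the curve fibers. The rank being estimated, $h^0_f(X,\mathcal O_X(L))$, is then bounded by a weighted sum of lower-dimensional continuous ranks, to which I apply the generalized Severi-type inequality (equivalently the $n-1$ case of the relative statement, with the base $Y$ replaced by the relevant subvariety, still of maximal Albanese dimension). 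Expanding $L^n = (\sum a_i D_i + \text{movable remainder})^n$ and collecting terms produces a sum over chains in the tree of products $\prod a_i$ times intersection numbers of the $D_i$'s; the classical Clifford bound on $C$ controls how many leaves the tree has and with what multiplicity. The coefficient $\tfrac{1}{2n!}$ is the ``main term'' coming from the generic fiber direction (the $\tfrac12$ is Clifford on $C$, the $\tfrac{1}{n!}$ is the volume normalization of the $n-1$ remaining directions over $Y$), and $\tfrac{n-\varepsilon}{n!d}$ is the correction from the lowest-weight leaves, where $\varepsilon$ records the sharp cases $d=1$ and the hyperelliptic/$f$-special/odd-$d$ case in which Clifford on $C$ can be improved by $\tfrac12$ (via the classification of curves achieving equality in \eqref{classicalClifford}).

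\emph{Step 3: The estimate on sums of admissible products of nef thresholds.} The crux — and the step I expect to be the main obstacle — is bounding $\sum_{\text{chains}} \prod_i a_i$ (weighted by the $D_i$-intersection numbers) from above in terms of $L^n$ and $d$. This is a genuinely combinatorial-convexity problem on the tree: one has the constraints that each $D_i$ is nef, that $L-\sum a_iD_i$ is nef along each chain, that the tree has controlled width (from Clifford on $C$), and one wants the extremal configuration. I anticipate this requires an inductive ``redistribution'' or convexity argument — pushing mass toward the leaves or merging branches — to reduce to a single worst-case chain, at which point the bound becomes an elementary inequality. Handling the boundary/degeneration locus of $f$ (where fibers are singular) and making sure nefness of $L$ is genuinely used rather than just relative nefness will require care; I would isolate those contributions and show they are non-negative, so they can only improve the inequality. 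Once the threshold estimate is in place, assembling Steps 1–3 and tracking the constants gives the stated bound, with $\varepsilon$ emerging from the two sharp subcases; sharpness would be checked against explicit product/bielliptic-type examples.
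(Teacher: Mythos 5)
Your plan captures the right raw ingredients (tree-like filtration from iterated pencils, nef thresholds, induction on dimension) but is missing the single most important structural move of the paper, and as written your induction does not close.

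First, the missing move: the maximal Albanese dimension hypothesis is used in the paper via Pardini's covering trick, and this is what kills the error terms. The argument that comes out of the tree-like filtration (Theorem \ref{mainwitherror} / Theorem \ref{explicitclifford}) does \emph{not} give the clean inequality in Theorem \ref{main}; it gives $h^0 \le (\tfrac{1}{2n!}+\tfrac{n-\varepsilon}{n!d})L^n + \tfrac{d+2}{2}\sum_m B_m(X,H,L)$ over an arbitrary base, with a genuine error term $\sum_m B_m$ that does not vanish on $X$ itself. To remove it one pulls back along the \'etale covers $X_{[k]}\to X$ induced by multiplication by $k$ on the Albanese variety of $Y$, observes that the pulled-back $H_{[k]}$ is numerically $\psi_{[k]}^*H/k^2$, so the error terms scale like $k^{2\dim A - 2}$ while $L_{[k]}^n$ and $h^0(X_{[k]},L_{[k]})$ scale like $k^{2\dim A}$, and then one takes $k\to\infty$ using Proposition \ref{limith0} to recover $h^0_f$. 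Your proposal never invokes this; you say you would ``isolate those contributions and show they are non-negative,'' but the error terms are not of the kind that can be absorbed --- they survive at every fixed $H$ and must be made asymptotically negligible by the base change.

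Second, and relatedly, the induction you sketch does not run as stated. You propose to apply ``the $n-1$ case of the relative statement, with the base $Y$ replaced by the relevant subvariety, still of maximal Albanese dimension.'' But the subvarieties $X_m$ appearing in the tree are fibered over complete intersections $Y_m$ of general members of $|H|$, and such $Y_m$ have no reason to be of maximal Albanese dimension (indeed, after Pardini's base change $H$ becomes numerically small and a general complete intersection of $|H|$'s need not even be irregular). This is exactly why the paper first proves the inductive statement (Theorem \ref{mainwitherror}) over an \emph{arbitrary} base carrying only a big base-point-free $|H|$, with error terms $B_m$ quantified in terms of $H$, and only at the very end, at the top level and after the covering trick, uses the maximal Albanese dimension. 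As an aside, your gloss on $\varepsilon$ is backwards: $\varepsilon=\tfrac12$ is the \emph{weaker} bound, occurring precisely where Clifford on $C$ \emph{cannot} be improved (hyperelliptic, $f$-special, $d$ odd, or $d=1$), not where it can.
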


When $d = \deg L > 0$, the classical Clifford inequality \eqref{classicalClifford} for special divisors $L$ can be reinterpreted as
$$
h^0_f( X, \CO_X(L) ) \le \left(\frac{1}{2} + \frac{1}{d}\right) \deg L.
$$
Here $f$ denotes the structure morphism of $X$, which is exactly of relative dimension one. This is the reason why we call Theorem \ref{main} a \emph{relative} Clifford inequality as it concerns a \emph{relative} curve as well as \emph{relatively} special divisors on it. In fact, these two inequalities above are of the same type. We could just literally take $n=1$ and $\varepsilon = 0$ in Theorem \ref{main} to get \eqref{classicalClifford}. Moreover, Theorem \ref{main} for $L=K_X$ implies Theorem \ref{main2}. 

Before going further, we want to illustrate the necessity of all assumptions in Theorem \ref{main} by the following remarks, because these assumptions reflect the significant differences when dealing with a relative curve rather than an absolute curve. Moreover, we emphasize here that the assumptions are probably the \emph{weakest} ones in order to guarantee the inequality in Theorem \ref{main}.

\begin{remark} \label{d>0} 
	The assumption that $d > 0$ is indispensable for obtaining Theorem \ref{main} (not merely because $d$ appears on the denominator). Actually, consider the trivial fibration $f: A \times C \to A$, where $A$ is an abelian variety of dimension $n-1$, and $C$ is a smooth curve of genus $g \ge 2$. If we choose $L = f^*H$ for an ample divisor $H$ on $A$, then $L$ is nef, $f$-special and $L^n = 0$. However, it is fairly easy to see that 
	$$
	h^0_f(X, \CO_X(L)) = h^0(A, \CO_A(H)) = \frac{H^{n-1}}{(n-1)!} > 0.
	$$
	In other words, when $d = 0$, unlike the classical Clifford inequality \eqref{classicalClifford}, there is no way to bound $h^0_f(X, \CO_X(L))$ from above just by $L^n$ and a certain absolute constant independent on $L$. 
\end{remark}

\begin{remark} \label{epsilon=1}
	It is impossible to take $\varepsilon = 1$ in all cases, especially when $C$ is hyperelliptic and $d$ is odd. In Section \ref{epsilon}, for any odd integer $1 \le d < 2g-2$, we construct a hyperelliptic surface fibration $f: X \to A$ and a nef and $f$-special divisor $L$ on $X$ with $d = \deg(L|_C)$ and 
	$$
	h^0_f(X, \CO_X(L)) > \left(\frac{1}{4} + \frac{1}{2d} \right) L^2.
	$$
	We also construct examples in any dimension $n \ge 2$ suggesting that we have to allow $\varepsilon = \frac{1}{2}$ at least when $d=1$. In other words, Theorem \ref{main} is sharp for $d=1$ in any dimension $n \ge 2$.
\end{remark}

\begin{remark} \label{a-bigness} 
	We may wonder whether the nefness assumption on $L$ in Theorem \ref{main} could be removed (while replacing $L^n$ by $\vol(L)$). The answer is \emph{no}. In Section \ref{nefnessremark}, for any $n \ge 2$ we construct a fibration $f: X \to A$ from a smooth variety $X$ of dimension $n$ to an abelian variety $A$ of dimension $n-1$ with general fiber $C$ a smooth curve of genus $g \ge 2$ such that for any $2 \le d \le 2g-2$, there exists an $f$-special and \emph{non-nef} divisor $L$ on $X$ with $\deg(L|_C) = d$ but
	$$
	h^0_f(X, \CO_X(L)) > \left(\frac{1}{2n!} + \frac{n - \varepsilon}{n! d}\right) \vol(L).
	$$
	
	In contrast to this remark is a very recent paper \cite{Barja_Pardini_Stoppino} by Barja, Pardini and Stoppino about the Clifford-Severi type inequality for line bundles on varieties of maximal Albanese dimension, in which the nefness assumption on the line bundle $L$ is removed without weakening the result in their previous work (such as \cite{Barja_Severi}). As we have seen, we cannot expect the similar phenomenon any more as long as the variety is \emph{not} of maximal Albanese dimension. This phenomenon somehow indicates that dealing with fiber type Albanese maps is more subtle than generically finite ones.
\end{remark}

From Remark \ref{epsilon=1}, we know that Theorem \ref{main} for $d=1$ is already sharp. Actually, we also consider the general case when $d > 1$ and prove the following:

\begin{coro} \label{2n-1!}
	Under the setting in Theorem \ref{main}, let $d > 1$. Then
	\begin{itemize}
		\item [(i)] we have
		$$
		L^n \ge 2(n-1)! h^0_f(X, \CO_X(L)).
		$$
		
		\item[(ii)] The above inequality is sharp. Suppose that the equality holds for $L$ and that $h^0_f(X, \CO_X(L)) > 0$ (or equivalently, $L$ is big). Then $d=2$ and $Y$ is birational to an abelian variety of dimension $n-1$.
	\end{itemize}
\end{coro}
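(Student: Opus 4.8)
The plan is to deduce part (i) directly from Theorem~\ref{main} by comparing the two coefficients of $L^n$, to establish the sharpness in (ii) with an explicit extremal family, and to obtain the equality characterization by feeding the equality back into Theorem~\ref{main} (and, where the numerics do not suffice, into its proof). For (i), note that $\frac{1}{2(n-1)!}=\frac{n}{2\,n!}$, so the asserted bound $L^n\ge 2(n-1)!\,h^0_f(X,\CO_X(L))$ is equivalent to $h^0_f(X,\CO_X(L))\le\frac{n}{2\,n!}L^n$; by Theorem~\ref{main} it therefore suffices to check that $\frac{1}{2n!}+\frac{n-\varepsilon}{n!\,d}\le\frac{n}{2n!}$, i.e. $2(n-\varepsilon)\le d(n-1)$. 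When $\varepsilon=1$ this reads $2(n-1)\le d(n-1)$, which holds since $d\ge 2$; when $\varepsilon=\frac12$ the hypothesis $d>1$ forces $d$ to be odd, hence $d\ge 3$, and then $d(n-1)\ge 3(n-1)\ge 2n-1=2(n-\frac{1}{2})$ for every $n\ge 2$. This proves (i).

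For the sharpness in (ii), fix an abelian variety $A$ of dimension $n-1$ and an ample line bundle $\CL$ on it, and set $\pi\colon P:=\PP_A(\CL\oplus\CO_A)\to A$, a $\PP^1$-bundle with tautological class $\xi$; then $\pi_*\CO_P(\xi)=\CL\oplus\CO_A$, $\CO_P(\xi)$ is nef, and $\xi^n=(\CL^{n-1})$. Pick a general smooth $B\in|2\Delta|$ on $P$ with $\Delta$ meeting each fibre of $\pi$ in $g+1\ge 3$ points, and let $\mu\colon X\to P$ be the double cover branched along $B$. Then $f:=\pi\circ\mu\colon X\to Y:=A$ is a fibration whose general fibre $C$ is a smooth hyperelliptic curve of genus $g\ge 2$, and $L:=\mu^*\CO_P(\xi)$ is nef with $L|_C$ equal to the $g^1_2$, so $d=2$ and $L$ is $f$-special, hence numerically $f$-special. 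Using $\mu_*\CO_X=\CO_P\oplus\CO_P(-\Delta)$ together with $\pi_*\CO_P(\xi-\Delta)=0$ one computes $H^0(X,\CO_X(L)\otimes f^*\alpha)=H^0(A,(\CL\oplus\CO_A)\otimes\alpha)$, whence $h^0_f(X,\CO_X(L))=\chi(A,\CL)=\frac{(\CL^{n-1})}{(n-1)!}$, while $L^n=2\,\xi^n=2(\CL^{n-1})$. Thus $L^n=2(n-1)!\,h^0_f(X,\CO_X(L))$, the divisor $L$ is big, and $Y$ is an abelian variety of dimension $n-1$: the bound in (i) is attained and the equality case described in (ii) genuinely occurs.

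For the characterization, assume $L^n=2(n-1)!\,h^0_f(X,\CO_X(L))$ with $h^0_f(X,\CO_X(L))>0$ (equivalently $L$ big, so $L^n>0$). Dividing the chain $\frac{n}{2n!}L^n=h^0_f(X,\CO_X(L))\le\bigl(\frac{1}{2n!}+\frac{n-\varepsilon}{n!\,d}\bigr)L^n\le\frac{n}{2n!}L^n$ by $L^n$ forces both inequalities to be equalities; the first yields $2(n-\varepsilon)=d(n-1)$, so $d=2$ when $\varepsilon=1$, and $(n-1)\mid 1$, i.e. $n=2$ and $d=3$, when $\varepsilon=\frac12$ — a residual case I will exclude by the more delicate equality analysis in dimension two. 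With $d=2$ in hand, the second equality is equality in Theorem~\ref{main} itself, and I would trace it through the proof: it propagates to the classical Clifford inequality on the general fibre $C$ (forcing $C$ hyperelliptic with $L|_C$ its $g^1_2$), it collapses the tree-like filtration of $L$ to a single trivial step, and it makes the Clifford--Severi-type inequality on $Y$ governing the remaining ``horizontal'' contribution an equality; the equality case of that inequality, for the relevant nef divisor on the maximal-Albanese-dimensional $Y$, forces $Y$ to be birational to an abelian variety of dimension $n-1=\dim Y$ (cf. \cite{Barja_Severi, Barja_Pardini_Stoppino, Zhang_Severi}). The step I expect to be the main obstacle is precisely this last one — turning numerical equality in Theorem~\ref{main} into the assertion that its filtration and each constituent inequality are extremal, and then invoking the sharp form of the Clifford--Severi characterization on $Y$ — together with the bookkeeping that disposes of the $(n,d)=(2,3)$ hyperelliptic exception; everything else is the elementary comparison in (i) and the explicit computation on the model in (ii).
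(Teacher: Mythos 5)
Your derivation of part (i) from Theorem~\ref{main} via the coefficient comparison $2(n-\varepsilon)\le d(n-1)$ is correct --- the paper indeed reads (i) off as a ``simple consequence'' and you have supplied exactly the arithmetic --- and your double cover of $\PP_A(\CL\oplus\CO_A)$ is, up to relabelling, the paper's own sharpness example in Section~\ref{constructionhu} (see Proposition~\ref{nepsilon} with $d=2$). So (i) and the existence of an extremal pair are in good shape.

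The equality characterization in (ii) is where the proposal falls short. You are right to observe that the numerics of Theorem~\ref{main} alone leave open the residual $(n,d)=(2,3)$ hyperelliptic case with $\varepsilon=\tfrac12$ (the paper's own ``Then $d=2$ by Theorem~\ref{main}'' is terse on exactly this point), but you do not in fact exclude it --- you only promise a ``more delicate equality analysis in dimension two.'' More seriously, your route to ``$Y$ birational to an abelian variety'' --- tracing equality through the internals of Theorem~\ref{main}, deducing that $C$ is hyperelliptic with $L|_C=g_2^1$, and collapsing the tree-like filtration --- is both unexecuted and, as far as it goes, unsound. Theorem~\ref{main} is obtained as a limit over the \'etale covers $X_{[k]}$, so equality in the limit does not transparently become equality at any finite filtration level; and equality does \emph{not} force hyperellipticity of $C$ (in the paper's Case~I of Section~\ref{proofcoro}, where $\phi_{L_{[k]}}$ factors through $f_{[k]}$, no hyperellipticity conclusion is drawn). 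The paper's actual argument is entirely different: it first shows $L_{[k]}-f^*_{[k]}H_{[k]}$ becomes effective for $k$ large, so the morphism $\phi_{L_{[k]}}$ given by $|L_{[k]}|$ factors the Albanese; it then splits into the cases $\dim\phi_{L_{[k]}}(X_{[k]})=n-1$ and $=n$, and in each case applies the decomposition result \cite[Theorem~3.3]{Barja_Severi}, the sharp Clifford--Severi bound \cite[Main Theorem~(c)]{Barja_Severi} on $Y_{[k]}$ (respectively on a general member $W\in|N|$ of the image), and finally the equality characterization \cite[Theorem~7.1]{Barja_Pardini_Stoppino} to conclude that $b_{[k]}$, hence $b$, is birational. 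To repair your argument you would need to replace the ``trace the filtration'' heuristic by this kind of concrete analysis of the map defined by $|L_{[k]}|$.
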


Under a stronger assumption that $L$ is nef, subcanonical (i.e., $K_X-L$ is pseudo-effective) with its continuous moving part being $f$-big (which we will mention later), Corollary \ref{2n-1!} (i) was proved by Barja in \cite[Main Theorem (b)]{Barja_Severi}. Here the inequality is just a simple consequence of Theorem \ref{main}, and it holds in a much broader setting than \cite{Barja_Severi}. More than the inequality itself, in Section \ref{epsilon}, we construct examples of $(X, L)$ in any dimension for which this inequality becomes an equality. Moreover, we also give a characterization of the equality case. 

\begin{remark}
	It is well-known that if the equality in \eqref{classicalClifford} holds (so $d$ is even) and $\CO_X(L) \ncong \CO_X$ or $\omega_X$, then $X$ must be a hyperelliptic curve. However, this is no longer true under the relative setting. A simple example is a surface fibration $f: X \to E$ from a smooth surface $X$ to an elliptic curve $E$ with a non-hyperelliptic general fiber $C$  as well as a section $D$ with $D^2 = -e < 0$. Such an example can be constructed either by blowing up the product $C \times E$ or by using a high degree ($>2$) cover over a ruled surface over $E$. Consider the divisor $L=2D + 2eC$ on $X$. It is easy to check that $L$ is nef and $f$-special, $\deg(L|_C) = 2$, $L^2 = 4e$, and $h^0_f(X, \CO_X(L)) = h^0_f(X, \CO_X(2eC)) = 2e$. 
	
	Notice that if we use the blowing up of $C \times E$ in the above, then $K_X$ is not big. Hence $K_X-L$ is not pseudo-effective. Moreover, the continuous moving part is not $f$-big. So it violates the assumption in \cite[Main Theorem (b)]{Barja_Severi}. Nevertheless, Theorem \ref{main} can apply.
\end{remark}

\subsection{Slope inequality for families of curves} In this subsection, we introduce another application of the relative Clifford inequality to establishing a Cornalba-Harris-Xiao type inequality for higher dimensional families of curves.

An important invariant in the study of families of curves is the \emph{slope}. For a relatively minimal (non-isotrivial) surface fibration $f: X \to Y$ of curves of genus $g \ge 2$, the slope is defined to be 
$$
s(f) : = \frac{K_{X/Y}^2}{\deg (f_* \omega_{X/Y})}.
$$
A fundamental result about the slope is the so-called \emph{slope inequality}, or \emph{Cornalba-Harris-Xiao inequality}, proved by Cornalba and Harris \cite[Theorem 1.3]{Cornalba_Harris_Slope} and Xiao  \cite[Theorem 2]{Xiao_Slope} which asserts that $s(f) \ge \frac{4g-4}{g}$, i.e.,
$$
K_{X/Y}^2 \ge 4 \left( \frac{g-1}{g} \right) \deg (f_* \omega_{X/Y}).
$$
We refer the reader to \cite[Chapter XIV]{ACG_Geometry_curves2} as well as all references therein for details regarding this inequality. In \cite{Zhang_Slope}, using a characteristic $p>0$ method, the author managed to prove a slope inequality for relatively minimal 3-fold fibrations $f: X \to Y$ of curves of genus $g \ge 2$ which asserts that
$$
K_{X/Y}^3 \ge 12 \left( \frac{g-1}{g+1} \right)  \ch_2(f_* \omega_{X/Y}).
$$
Moreover, a question \cite[Question in \S 1]{Zhang_Slope} was proposed: \emph{for $m \ge 2$, does there exist a general Cornalba-Harris-Xiao inequality for $m$-dimensional families of curves $f: X \to Y$ 
between $K_{X/Y}^{m+1}$ and $\ch_{m}(f_* \omega_{X/Y})$?}\footnote{There is another generalization of the slope inequality with one dimensional bases and high dimensional fibers: inspired by \cite{Cornalba_Harris_Slope}, Barja and Stoppino made a conjecture \cite[Page 36, Conjecture  1]{Barja_Stoppino} on a slope inequality of similar type for families of higher dimensional varieties over curves, which we will address in a forthcoming paper \cite{Hu_Zhang}.}

As another main result of this paper, we give the first evidence to the existence of such a general inequality for \emph{arbitrary dimensional} families of curves of genus $g \ge 2$.

\begin{theorem} \label{main3}
	Let $f: X \to Y$ be a semi-stable fibration of curves of genus $g \ge 2$ from a normal variety $X$ of dimension $n \ge 2$ to a smooth variety $Y$ of dimension $n-1$. Suppose that $K_{X/Y}$ is nef. Then the inequality
	$$
	K_{X/Y}^n \ge 2 n! \left(\frac{g-1}{g+n-2}\right) \ch_{n-1}(f_* \omega_{X/Y})
	$$
	holds if there is a finite morphism $Z \to Y$ where $Z$ is either an abelian variety or a smooth toric Fano variety.
	
    Moreover, if $Y$ itself is an abelian variety, then the semi-stability assumption on $f$ can be removed.
\end{theorem}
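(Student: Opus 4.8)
The plan is to reduce, by a finite base change, to the case where the base $Y$ is itself an abelian variety or a smooth toric Fano variety, and then to handle these two cases by different mechanisms: the relative Clifford inequality of Theorem~\ref{main} together with generic vanishing when $Y$ is abelian, and Koll\'ar vanishing together with the polarized self-maps of a toric variety when $Y$ is toric Fano. For the reduction, let $\pi\colon Z\to Y$ be the given finite morphism, of degree $e$. A base change along $\pi$ followed by a crepant semistable modification---this is where the semistability of $f$ is used, and it is superfluous when $Y$ is already abelian---produces a fibration of genus-$g$ curves $f_Z\colon X_Z\to Z$ with $X_Z$ normal of dimension $n$, a finite morphism $\rho\colon X_Z\to X$ of degree $e$, and $\omega_{X_Z/Z}=\rho^*\omega_{X/Y}$. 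Hence $K_{X_Z/Z}=\rho^*K_{X/Y}$ is nef, $K_{X_Z/Z}^n=e\,K_{X/Y}^n$, and, the relative dualizing sheaf commuting with flat base change, $f_{Z\ast}\omega_{X_Z/Z}=\pi^*(f_\ast\omega_{X/Y})$, so that $\ch_{n-1}(f_{Z\ast}\omega_{X_Z/Z})=e\,\ch_{n-1}(f_\ast\omega_{X/Y})$. Thus the asserted inequality for $f$ is equivalent to the same inequality for $f_Z$, and we may assume from now on that $Y$ is an abelian variety or a smooth toric Fano variety.

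Suppose first $Y=A$ is an abelian variety. Then $K_A=0$, so $L:=K_{X/A}=K_X$ is nef by hypothesis and numerically $f$-special, since $L|_C=K_C$ and $K_C-L|_C=0$ are pseudoeffective, with $d=\deg(L|_C)=2g-2>0$. As $d$ is even, $\varepsilon=1$ in Theorem~\ref{main}, which therefore gives
\[
h^0_f\bigl(X,\CO_X(K_X)\bigr)\ \le\ \Bigl(\frac{1}{2n!}+\frac{n-1}{n!(2g-2)}\Bigr)K_X^n\ =\ \frac{g+n-2}{2n!(g-1)}\,K_X^n .
\]
It remains to compute the left-hand side. By the projection formula, $h^0_f(X,\CO_X(K_X))=\min_{\alpha\in\Pic^0(A)}h^0(A,f_\ast\omega_X\otimes\alpha)$; since $f\colon X\to A$ maps to an abelian variety, $f_\ast\omega_X$ is a generic-vanishing sheaf on $A$, so $h^0(A,f_\ast\omega_X\otimes\alpha)$ equals $\chi(A,f_\ast\omega_X)$ for $\alpha$ in a dense open subset of $\Pic^0(A)$, and by upper semicontinuity this generic value is the minimum. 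Because $T_A$ is trivial, $\chi(A,f_\ast\omega_X)=\int_A\ch(f_\ast\omega_X)=\int_A\ch_{n-1}(f_\ast\omega_X)=\ch_{n-1}(f_\ast\omega_{X/A})$. Combining, $K_{X/A}^n\ge 2n!\bigl(\tfrac{g-1}{g+n-2}\bigr)\ch_{n-1}(f_\ast\omega_{X/A})$; this is also the ``Moreover'' part, semistability having played no role in this step.

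Now suppose $Y=Z$ is a smooth toric Fano variety of dimension $n-1$, so that $\Pic^0(Z)=0$ and the $f$-continuous rank reduces to the ordinary $h^0$. For $m\ge 1$ let $\psi_m\colon Z\to Z$ be the toric endomorphism induced by multiplication by $m$ on the cocharacter lattice: it is finite of degree $m^{n-1}$, is polarized, and acts as $m^k$ on $H^{2k}(Z)$. Let $f_m\colon X_m\to Z$ be obtained from $f$ by base change along $\psi_m$ (with a crepant semistable modification), so $K_{X_m/Z}^n=m^{n-1}K_{X/Z}^n$ and $f_{m\ast}\omega_{X_m/Z}=\psi_m^*(f_\ast\omega_{X/Z})$. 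The crucial ingredient is a toric analogue of Theorem~\ref{main}: for every fibration of genus-$g$ curves $X'\to Z$ with $X'$ normal of dimension $n$ and $K_{X'/Z}$ nef, one has $h^0(X',\omega_{X'/Z})\le\frac{g+n-2}{2n!(g-1)}K_{X'/Z}^n$. (For $n-1=1$ this is the classical Cornalba--Harris--Xiao inequality \cite{Cornalba_Harris_Slope,Xiao_Slope}.) Granting it and applying it to $f_m$, we use that $\omega_{X_m/Z}=\omega_{X_m}\otimes f_m^*\CO_Z(-K_Z)$ with $-K_Z$ ample, so that Koll\'ar's vanishing theorem gives $H^i(Z,f_{m\ast}\omega_{X_m/Z})=0$ for $i>0$, whence by Riemann--Roch
\[
h^0(X_m,\omega_{X_m/Z})=\chi\bigl(Z,\psi_m^*(f_\ast\omega_{X/Z})\bigr)=\sum_{j=0}^{n-1}m^{\,j}\int_Z\ch_j(f_\ast\omega_{X/Z})\,\mathrm{td}_{n-1-j}(Z)=m^{n-1}\ch_{n-1}(f_\ast\omega_{X/Z})+O(m^{n-2}).
\]
On the other hand, the toric analogue gives $h^0(X_m,\omega_{X_m/Z})\le\frac{g+n-2}{2n!(g-1)}K_{X_m/Z}^n=m^{n-1}\,\frac{g+n-2}{2n!(g-1)}\,K_{X/Z}^n$. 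Dividing by $m^{n-1}$ and letting $m\to\infty$ yields $\ch_{n-1}(f_\ast\omega_{X/Z})\le\frac{g+n-2}{2n!(g-1)}K_{X/Z}^n$, i.e.\ the desired inequality.

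The main obstacle is the toric analogue of Theorem~\ref{main} invoked in the last paragraph: there being no Albanese map to exploit over a toric Fano base, the mechanism behind Theorem~\ref{main}---the interaction of the tree-like filtration with continuous twists in $\Pic^0$---must be rerun with the self-maps $\psi_m$ (which, like the maps $[m]$ on an abelian variety, furnish polarized endomorphisms of unbounded degree; this common feature of toric and abelian varieties is what the hypothesis on $Z$ really encodes) playing that role, and one must verify that the estimate for the sum of admissible products of nef thresholds survives this substitution. It is precisely here, and in the use of $-K_Z$ being ample in the Koll\'ar-vanishing step, that the ``toric'' and ``Fano'' hypotheses are genuinely needed. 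The remaining points---preservation of semistability and of $K_{X_Z/Z}=\rho^*K_{X/Y}$ under finite base change and crepant modification, base-change compatibility of $\omega$ and of $f_\ast\omega$, and the various vanishing statements for possibly singular total spaces---are standard but require the usual care with semistable reduction and with Grauert--Riemenschneider/Koll\'ar-type vanishing.
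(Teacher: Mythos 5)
Your reduction to the case $Z=Y$ and your treatment of the abelian case line up with the paper, modulo one technical inexactness: the paper does not perform any ``crepant semistable modification'' after base change. Instead it invokes Viehweg's result that a semistable fibration over a smooth base is flat, so that $X'=X\times_Y Z$ is already normal with rational singularities, and then $\tau^*\omega_{X/Y}=\omega_{X'/Z}$ follows from the base-change theory of relative dualizing sheaves. No further modification is introduced, and none is needed; introducing one would in fact make it harder to control $K_{X'/Z}^n$ and $f'_*\omega_{X'/Z}$ exactly.

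The substantive gap is the one you yourself flag: the ``toric analogue of Theorem~\ref{main},'' namely $h^0(X',\omega_{X'/Z})\le\frac{g+n-2}{2n!(g-1)}K_{X'/Z}^n$ for every such $X'\to Z$ over a smooth toric Fano $Z$, is invoked but not established, and Theorem~\ref{main} itself is inapplicable because a toric variety is not of maximal Albanese dimension (its Albanese is a point). The paper does not prove such a clean inequality and does not need it. What it proves instead, and what does the work, is Theorem~\ref{explicitclifford}, a Clifford inequality \emph{with explicit error terms} $\sum_m B_m(X,H,L)$ that holds over an \emph{arbitrary} base and requires no structure on $\Pic^0$. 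The paper then applies this estimate to each cover $X_{(k)}=X\times_{\mu_k}Y$, where $\mu_k$ is the $k$-fold iterate of the polarized toric endomorphism, and verifies by a direct computation with $\mu^*H\equiv qH$ that the error terms divided by $\deg(\mu_k)$ tend to zero; combined with Koll\'ar vanishing and Riemann--Roch, taking $k\to\infty$ yields the $\ch_{n-1}$ inequality directly. In other words, the machinery you describe as the thing one ``must rerun'' is precisely Theorem~\ref{explicitclifford}: it has already been set up to work over any base, and the role of the abelian multiplication maps is taken over not by proving a clean toric inequality first, but by feeding the polarized endomorphisms into the limiting argument so that the error terms become subleading. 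Your argument would become complete if, in place of the putative toric analogue, you inserted Theorem~\ref{explicitclifford} applied to each $f_m$ and then checked, as the paper does, that the $B_m(X_{(k)},H,K_{X_{(k)}/Y})$ terms are $o(\deg\mu_k)$; as written, the proposal does not contain that verification, and it is exactly there (together with the Koll\'ar-vanishing step using $-K_Z$ ample) that the toric Fano hypothesis is actually consumed.

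One further small remark: the clean $h^0$ bound you posit over a toric Fano base is not a byproduct of the paper's proof. The paper establishes only the $\ch_{n-1}$ inequality in the limit; for a fixed $f$ one has $h^0(X,\omega_{X/Z})=\chi(Z,f_*\omega_{X/Z})=\ch_{n-1}(f_*\omega_{X/Z})+(\text{lower Chern terms against Todd classes of }Z)$, and these lower-order contributions are not controlled. So the toric analogue is a genuinely different (and possibly false or at least unproven) statement, not merely an intermediate step that the paper happens to phrase differently.
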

Here by \emph{semi-stability} we mean that each fiber of $f$ is semi-stable in the sense of Deligne and Mumford. 

We may view abelian varieties and toric Fano varieties as higher dimensional generalizations of elliptic curves and $\PP^1$ in the dynamical sense. There are some classification results about what the varieties $Y$ and $Z$ in Theorem \ref{main3} can possibly be. Notably, $Y$ does not have to be of maximal Albanese dimension. For example, $Y$ can be $\PP^{n-1}$, certain blow-up of $\PP^{n-1}$, the product of $\PP^{m}$ and any abelian variety of dimension $n-m-1$, etc. We refer the reader to \cite{Debarre_Abelian_Cover,Hwang_Mok_Abelian_Cover,Demailly_Hwang_Peternell_Abelian_Cover} for details about varieties that can be covered by abelian varieties, and to \cite{Batyrev_Toric3,Batyrev_Toric4,Kreuzer_Nill_Toric5} about toric Fano varieties in lower dimensions.  

Another notable fact is that in the proof of Theorem \ref{main3}, we actually discover that $\ch_{n-1}(f_*\omega_{X/Y}) \ge 0$. It can be viewed as an analogue of the semi-positivity result by Fujita \cite{Fujita_Fibration} that $\deg (f_* \omega_{X/Y}) \ge 0$ when $Y$ is a curve.

\subsection{Idea the proof} In this subsection, we sketch the proofs of the above theorems.  The whole proof is based on a new \emph{tree-like filtration} for nef divisors which is the main innovative technique of this paper. Roughly speaking, it is via this filtration that we deduce an explicit Clifford-type inequality, i.e., Theorem \ref{explicitclifford}, for nef divisors on varieties fibered by curves over \emph{arbitrary base varieties}. Then each of the main results is a limit version of Theorem \ref{explicitclifford} in the corresponding setting.

\subsubsection{Tree-like filtration} Let $f: X \to Y$ be a fibration of curves between two normal varieties $X$ and $Y$, where $\dim X = n \ge 2$. Pick a base-point-free linear system $|H|$ on $Y$ with $H$ big. Let $L$ be a nef divisor on $X$. Replacing $X$ by an appropriate blowing up induced by a pencil in $|H|$, we may assume that $X$ is in the meantime fibered over a curve with a general fiber $X_1$. Then by Theorem \ref{filtration}, we obtain a filtration of nef divisors
$$
L = L_0 > L_1 > \cdots > L_N \ge 0
$$
on $X$. Notice that $X_1$ is vertical with respect to $f$. Hence $X_1$ is also fibered by curves. By Theorem \ref{filtration}, we can get a similar filtration for each $L_i|_{X_1}$ by modifying $X_1$ accordingly. Keep doing this until we reach a fibered surface $X_{n-2}$. The whole process gives rise to a number of varieties $(X =) X_0$, $X_1$, $\cdots$, $X_{n-2}$ as well as a tree-like filtration for $L$ associated to these varieties as follows:
\begin{center}
	\begin{tikzpicture}
	[level 1/.style={sibling distance=30mm},
	level 2/.style={sibling distance=8mm},
	level 3/.style={sibling distance=5mm}]
	\node {$L$}
	child {node {$L_0$}
		child {node {$L_{00}$}
			child {node {...}}
			child {node {...}}
		}
		child {node {$L_{01}$}
			child {node {...}}
			child {node {...}}
		}
		child {node {$\cdots$}
			child {node {...}}
			child {node {...}}
		}
		child {node {$L_{0N_0}$}
			child {node {...}}
			child {node {...}}
		}
	}
	child {node {$\cdots$}
		child {node {$\cdots$}
			child {node {...}}
			child {node {...}}
		}
		child {node {$\cdots$}
			child {node {...}}
			child {node {...}}
		}
	}
	child {node {$L_{i_1}$}
		child {node {$L_{i_10}$}
			child {node {...}}
			child {node {...}}
		}
		child {node {$L_{i_11}$}
			child {node {...}}
			child {node {...}}
		}
		child {node {$\cdots$}
			child {node {...}}
			child {node {...}}
		}
		child {node {$L_{i_1N_{i_1}}$}
			child {node {...}}
			child {node {...}}
		}
	}
	child {node {$\cdots$}
		child {node {$\cdots$}
			child {node {...}}
			child {node {...}}
		}
		child {node {$\cdots$}
			child {node {...}}
			child {node {...}}
		}
	};
	\end{tikzpicture}
\end{center}
Here each $L_{i_1 \cdots i_m}$ $(1 \le m \le n-1)$ that appears as a node in the above tree is a nef divisor on $X_{m-1}$. We refer the reader to Section \ref{iteratedfiltration} for more details. 

\subsubsection{Key estimate} In this paper, we use the above tree-like filtration to study the relation between $h^0(X, \CO_X(L))$ and $L^n$. In fact, for a nef divisor $L$ on $X$, from the above tree-like filtration we can get the following number
\begin{equation} \label{keynumber}
	\sum_{i_1, i_2, \cdots, i_{n-1}} a_{i_1} a_{i_1 i_2} \cdots a_{i_1 i_2 \cdots i_{n-1}}
\end{equation}
associated to $L$ (and $H$). Here for each multi-index $i_1 \cdots i_m$ of length $m$, 
$$
a_{i_1 \cdots i_m} := \nti_{\pi_{m-1}}(L_{i_1 \cdots i_m})
$$
is the smallest integer that is bigger than the nef threshold of $L_{i_1 \cdots i_m}$ with respect to $\pi_{m-1}$, where $\pi_{m-1}$ is the fibration from $X_{m-1}$ to a curve (see Section \ref{nefthreshold} for the precise definition). The number in \eqref{keynumber} is the whole key to us, because it offers a crucial link between $h^0(X, \CO_X(L))$ and $L^n$. In particular, bounding \eqref{keynumber} in different directions leads to the following explicit estimate:

\begin{theorem} [Clifford inequality over an arbitrary base] \label{explicitclifford}
	Let $f: X \to Y$ be a fibration between two normal varieties $X$ and $Y$ with $\dim X = n \ge 2$ and with general fiber $C$ a smooth curve of genus $g \ge 2$. Fix a base-point-free linear system $|H|$ on $Y$ with $H^{n-1} > 0$. Let $L$ be a nef and numerically $f$-special divisor on $X$ with $d = \deg(L|_C) > 0$. Then one of the following results holds:
	\begin{itemize}
		\item [(1)] We have
		$$
		h^0(X, \CO_X(L)) \le \left(\frac{1}{2n!} + \frac{n-\varepsilon}{n! d}\right) L^n + \frac{d+2}{2} \sum_{m=1}^{n-1} B_m(X, H, L).
		$$
		Here $\varepsilon = \frac{1}{2}$ when $d = 1$ or when $C$ is hyperelliptic, $L$ is $f$-special and $d$ is odd. Otherwise, $\varepsilon = 1$. 
		\item [(2)] We have
		$$
		h^0(X, \CO_X(L)) \le h^0(X', \CO_{X'}(L)),
		$$ 
		where $X' = f^*H$ is viewed as a subvariety of $X$.
	\end{itemize}
\end{theorem}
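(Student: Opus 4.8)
The plan is to build the tree-like filtration of $L$ from Section \ref{iteratedfiltration} by iterating Theorem \ref{filtration}, and then to extract both $h^0(X,\CO_X(L))$ and $L^n$ from the same combinatorial datum — the sum of products of integer nef thresholds in \eqref{keynumber}. After replacing $X$ by the blow-up attached to a general pencil in $|H|$, we obtain a second fibration $\pi_0\colon X\to\PP^1$ whose general fibre $X_1$ is a model of the divisor $X'=f^{*}H$; since $X_1$ is $f$-vertical it is again fibered by curves of genus $g$ over a model of $H$, and $|H|$ restricts there to a base-point-free system with positive top self-intersection, so the construction propagates all the way down to the bottom curve $C$. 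Applying Theorem \ref{filtration} at each level attaches to every node $i_1\cdots i_m$ a nef divisor $L_{i_1\cdots i_m}$ on $X_{m-1}$ together with its integer nef threshold $a_{i_1\cdots i_m}=\nti_{\pi_{m-1}}(L_{i_1\cdots i_m})$.

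First I would prove an upper bound for $h^0$. Pushing forward along $\pi_0$ and using that, for a vector bundle $E$ on $\PP^1$, the number $h^0(\PP^1,E)$ is governed by the degrees of the graded pieces of its Harder--Narasimhan filtration, one bounds $h^0(X,\CO_X(L))$ by $\sum_i a_i\bigl(h^0(X_1,L_{i-1}|_{X_1})-h^0(X_1,L_i|_{X_1})\bigr)$ plus corrections accounting for the exceptional divisors of the blow-up, the tail $L_{N_0}\ge 0$, and the rounding gap between $h^0$ over $\PP^1$ and the naive slope estimate. Iterating this down the tree reduces everything to the bottom curves, where one applies the Clifford-type bound $h^0(C,D)\le\frac12\deg D+1$ — valid for \emph{every} divisor $D$ with $0\le\deg D\le 2g-2$ whether or not $D$ is special, which is why only numerical $f$-specialness ($0\le d\le 2g-2$) is needed — together with a refinement that, via the filtration, gains a further $\frac12$ except when $d=1$ or $C$ is hyperelliptic with $L$ $f$-special and $d$ odd. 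This yields the preliminary bound
\[
h^0(X,\CO_X(L))\ \le\ \sum_{i_1,\dots,i_{n-1}}a_{i_1}a_{i_1i_2}\cdots a_{i_1\cdots i_{n-1}}\Bigl(\tfrac12\,d_{i_1\cdots i_{n-1}}+1\Bigr)\ +\ \frac{d+2}{2}\sum_{m=1}^{n-1}B_m(X,H,L),
\]
where each $d_{i_1\cdots i_{n-1}}=L_{i_1\cdots i_{n-1}}\cdot C$ lies in $[0,d]$.

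Next I would prove a matching lower bound for the volume: expanding $L^n=L_0^n$ telescopically along the first-level chain, using that $L_i-tX_1$ stays nef for $t$ below its threshold, and iterating down the tree, one finds $L^n$ bounded below by a positive combination of the very same products $a_{i_1}\cdots a_{i_1\cdots i_{n-1}}$ weighted by the degrees $d_{i_1\cdots i_{n-1}}$ — this is the ``estimate of the sum of all admissible products of nef thresholds'' advertised in the abstract. Comparing the two halves then reduces the theorem to a combinatorial inequality: writing $P_{\mathbf i}=a_{i_1}\cdots a_{i_1\cdots i_{n-1}}$, one must show $\tfrac12\sum_{\mathbf i}P_{\mathbf i}d_{\mathbf i}+\sum_{\mathbf i}P_{\mathbf i}$ is at most $\bigl(\tfrac1{2n!}+\tfrac{n-\varepsilon}{n!d}\bigr)$ times the lower bound obtained for $L^n$, a statement purely about the $P_{\mathbf i}$ and the $d_{\mathbf i}\in[0,d]$ subject to the admissibility constraints tying a node's threshold to those of its children; a Lagrange-type analysis shows the worst case concentrates all the weight on a single branch of the tree, and the constants then match. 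The dichotomy in the statement enters at the very top level: if already $h^0(X,\CO_X(L))\le h^0(X_1,L|_{X_1})$ then, since $h^0(X_1,L|_{X_1})=h^0(X',\CO_{X'}(L))$ by birational invariance, we are in alternative (2); otherwise $\pi_{0*}\CO_X(L)$ has positive-degree quotients, the first-level filtration is non-degenerate, and the two bounds above give alternative (1).

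I expect the sharp combinatorial inequality to be the main obstacle: one must follow how the Clifford defect on the bottom curves propagates through the product weights $P_{\mathbf i}$ all the way up the tree, determine exactly in which cases the extra $\frac12$ is forfeited (matching the examples that show $\varepsilon=\frac12$ is unavoidable there), and simultaneously keep the corrections in the form of a bounded multiple of intersection numbers involving $H$, so that they vanish in the limiting regime later used to deduce Theorems \ref{main} and \ref{main2}. A secondary technical point is making the rounding bookkeeping in the passage from $h^0(X,\CO_X(L))$ to $\sum_i a_i(\cdots)$ uniform across all $n-1$ levels of the tree.
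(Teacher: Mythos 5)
Your high-level picture (tree-like filtration, sum of products of nef thresholds, bound $h^0$ above and $L^n$ below) is the right one, and the top-level dichotomy idea (``either the restriction to a member of $|f^*H|$ already controls $h^0$, or argue directly'') is close to what the paper does. But the central step of your proposal contains a genuine gap.

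You propose to decouple the two estimates and then close the argument via a purely combinatorial inequality in the abstract data $(P_{\mathbf i},d_{\mathbf i})$, optimizing over admissibility constraints that relate only the nef thresholds. This fails, even in the surface case. Take $n=2$, $d=2$, $\varepsilon=1$. Your claimed $h^0$-bound is $\sum_i a_i(\tfrac12 d_i+1)$, and the decoupled lower bound for $L^2$ (dropping base-loci terms) is roughly $2\sum_i a_i d_i - 2d$. Plugging into the target inequality one would need $\sum_i a_i(1-\tfrac12 d_i)\le 0$, which is violated as soon as some $d_i\le 1$ — and $d_i$ dropping is exactly what happens as the filtration progresses. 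The resolution in the paper is that each time the fibre degree drops, a horizontal base locus $Z^H_{\lambda_j}$ appears, and the corresponding intersection numbers $(L'_{i})^{n-1}Z_{\lambda_j}$ and $(L_i|_{X_1})^{n-2}(Z^H_{\lambda_j}|_{X_1})$ contribute positively to $L^n$ (Propositions~\ref{ln-1z} and~\ref{numericalinequality'}). These contributions are genuine higher-dimensional intersection numbers, not functions of $(P_{\mathbf i},d_{\mathbf i})$, and they are precisely what absorbs the ``$+1$'' in the Clifford bound. Correspondingly the paper does not separate the two bounds at all: Propositions~\ref{h0llnhyper}--\ref{h0llnnonhyper2} estimate the difference $h^0-\frac{L^n}{2n!}-(\text{base-loci terms})$ in a single inductive step (with an auxiliary integer $n_0$ tracking how much of the horizontal divisor can be absorbed), and this coupled estimate cannot be recovered from your two decoupled halves by a Lagrange-type optimization. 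The same coupling is also how the paper gains the extra $\tfrac12$ in the hyperelliptic/$f$-special/$d$-odd case: $\deg(Z^H_{\lambda_j}|_C)\ge 2$ is forced by specialness, which makes $\varepsilon=1$ usable one level down in the induction — this is a geometric, not combinatorial, observation.

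Two secondary points. First, the paper's dichotomy trigger is pseudo-effectivity of $L-f^*H$: if it fails, $h^0(X,\CO_X(L-f^*H))=0$ gives alternative (2) directly from the restriction sequence to $X'=f^*H$; if it holds, this same hypothesis (in the form ``$P-f^*H$ pseudo-effective'') is what powers Proposition~\ref{ph>0} and Lemma~\ref{error1}, i.e.\ the bounds that turn the tail of the filtration into the error terms $B_m(X,H,L)$. Your trigger (``$h^0(X,\CO_X(L))\le h^0(X_1,L|_{X_1})$'') is not obviously equivalent and, more importantly, does not by itself supply the pseudo-effectivity you need in the ``otherwise'' branch. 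Second, Theorem~\ref{mainwitherror} is stated for smooth $X$, $Y$ with $|L|$ base-point-free, so the paper's proof of Theorem~\ref{explicitclifford} first resolves $X$, $Y$ and replaces the pullback of $L$ by its movable part, checking that the intersection numbers and $B_m$ are unchanged; this reduction should appear explicitly in any correct write-up.
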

We refer the reader to Definition \ref{bmp} in Section \ref{settingp} for the concrete expression of $B_m(X, H, L)$. In one word, this theorem says that either we can almost get the desired comparison between $h^0(X, \CO_X(L))$ and $L^n$ on $X$, or we can reduce the question to a variety of lower dimension.

\subsubsection{Proof of main results} As we have mentioned, all proofs of main results are heavily based on Theorem \ref{explicitclifford}. For the proof of Theorem \ref{main}, we adopt the idea of Pardini \cite{Pardini_Severi} to construct \'etale covers $X_{[k]}$ of $X$ via the multiplication-by-$k$ map of the Albanese variety of $Y$. We are able to show that Theorem \ref{main} is in fact a limit version of Theorem \ref{explicitclifford} on all $X_{[k]}$. By the generic vanishing theorem of Green-Lazarsfeld \cite{Green_Lazarsfeld_GenericVanishing}, we can show that
$$
h^0_f(X, \omega_X) \ge \chi(X, \omega_X)
$$
when the Albanese map of $X$ induces a fibration of curves. Hence Theorem \ref{main2} can be proved. 

To prove of the slope inequality in Theorem \ref{main3}, we may just reduce to the case when $Z=Y$ via a base change. Let us first assume that $Y$ itself is a toric Fano variety. Then we apply the nontrivial polarized endomorphism of $Y$ to construct a family of fibrations $f_{(k)}: X_{(k)} \to Y$ via base changes. With the help of Koll\'ar's vanishing theorem \cite{Kollar_Higher} on $H^i(Y, f_{(k)*} \omega_{X_{(k)}/Y})$, we show that Theorem \ref{main3} is another limit version of Theorem \ref{explicitclifford}. When $Y$ is an abelian variety, the result is more or less equivalent to Theorem \ref{main}.

\begin{remark}
	It is impossible to prove Theorem \ref{main3} directly using Theorem \ref{main} or some related methods about irregular varieties. For example, most toric Fano varieties cannot be covered by abelian varieties (except in dimension one), and this already prevents us from transferring the slope inequality over toric Fano varieties to that over abelian varieties via base changes. From this point of view, Theorem \ref{explicitclifford} is useful in its own right, because not only for irregular varieties, it applies also to some problems which actually involves no irregular varieties.
\end{remark}

\subsection{Comparison with other methods}
In \cite{Zhang_Severi}, the author studied the canonical volume of varieties of maximal Albanese dimension. The method therein may apply to the current setting, but it is too weak to give results like Theorem \ref{main2}, even in the Gorenstein $3$-fold case (see \cite{Zhang_3fold} for example, where the result is much weaker than here). 

In \cite{Barja_Severi}, Barja introduced an interesting method to study the lower bound of the volume of nef line bundles on irregular varieties. Let $a: X \to A$ be a morphism from a projective variety $X$ of dimension $n$ to an abelian variety $A$. For any nef line bundle $L$ on $X$, Barja considered the so-called continuous moving part $M$ of $L$, with properties that $L-M$ is effective and that $h^0_a(X, L) = h^0_a(X, M)$ \cite[Definition 3.1]{Barja_Severi}. When $\dim a(X) < \dim X$, he reduces the lower bound of $L^n$ (c.f. \cite[Main Theorem (b)]{Barja_Severi}) to a lower bound of $M^n$, by assuming the $a$-bigness of $M$ and the subcanonicity of $L$. 

However, it can happen that $M$ is not $a$-big even if $L$ itself is big, and Barja \cite[Remark 3.8]{Barja_Severi} has observed this when $\dim X - \dim a(X) \ge 2$. We discover that even if the relative dimension is one (which is our main interest in this paper), this still happens. See Remark \ref{abigness}. Second, in order to get \cite[Main Theorem (b)]{Barja_Severi}, the method of Barja requires a very strong assumption that $K_X-L$ is pseudo-effective. A priori, $K_X$ has to be big (i.e., $X$ is of general type), otherwise $L^n=h^0_a(X, L) = 0$ and the result loc. cit. becomes vacuous. While our method in the current paper only requires a relative assumption that $K_C-L|_C$ is pseudo-effective. This is a much weaker one.

Aside from the above, the most significant difference between our method in the current paper and the methods in \cite{Barja_Severi} or \cite{Zhang_3fold} lies in the following. Say $L=K_X$ for simplicity. When the Albanese dimension of $X$ is $k$, the key estimate used by Barja is as follows:
\begin{equation} \label{bestimate}
	K_X^n \ge 2k!h^0_a(X, \omega_X) + \mbox{error term}.
\end{equation}
Barja proved this inequality based on Xiao's method for fibrations over $\PP^1$ (see \cite[\S 5.1]{Barja_Severi}). When $n=3$, a similar estimate \cite[Theorem 5.2]{Zhang_3fold}, replacing $h^0_a(X, \omega_X)$ in \eqref{bestimate} by $h^0(X, \omega_X)$, is also crucial in \cite{Zhang_3fold}. In the case when $k=n$, this estimate is good enough to get a sharp bound after combining Pardini's limiting method \cite{Pardini_Severi}. However, Theorem \ref{explicitclifford} (together with the continuation of the current paper which is being written) reveals that when $k < n$, the main term is actually much larger than exhibited in \eqref{bestimate}. In particular, the leading coefficient increasingly tends to $2(k+1)!$. This very important feature has not been detected before via the methods in either \cite{Barja_Severi} or \cite{Zhang_3fold}, but the method via the tree-like filtration does capture it! This is the reason why results in this paper are sharper than the aforementioned ones. It is just due to this sharper result that we are able to give the characterization in Corollary \ref{2n-1!} (ii). 

We expect that the method introduced in this paper can play a similar role when studying fibrations over high dimensional varieties as what Xiao's method \cite{Xiao_Slope} does for fibrations over curves. Notice that Xiao's method has already been applied further to study families of higher dimensional varieties over curves (e.g., \cite{Ohno_Slope,Barja_Lower_bounds,Barja_Stoppino}), geographical problems (e.g., \cite{Pardini_Severi,Barja_Severi}), and even some conjectures of arithmetic background (e.g., \cite{Chen_Lu_Zuo_Oort}).

\subsection{Notation and Conventions} \label{notation} The following notation and conventions will be used throughout this paper.

\subsubsection{Minimality} \label{minimality} We say that a variety $X$ is minimal, if $X$ is normal with at worst terminal singularities, and $K_X$ is a nef $\QQ$-Cartier Weil divisor (i.e., $X$ being $\QQ$-Gorenstein). We say that a normal variety $X$ is of general type, if $K_X$ is big.

\subsubsection{Fibration and nef threshold} \label{nefthreshold} A fibration in this paper always means a surjective morphism with connected fibers. Let $f: X \to B$ be a fibration over a smooth curve $B$ with a general fiber $F$. For any nef divisor $L$ on $X$, the \emph{nef threshold of $L$ with respect to $f$} refers to the following real number
$$
\nt_f(L) := \sup \{ a \in \RR | L - aF \, \mbox{is nef}\}.
$$
In the above setting, we write
$$
\nti_f(L) := \rounddown{\nt_f(L)} + 1 = \min \{ a \in \ZZ | L - aF \, \mbox{is not nef}\}.
$$
This slightly bigger number is always integral and was introduced in \cite{YuanZhang_RelNoether,Zhang_Severi}.

\subsubsection{Divisor and line bundle} In this paper, we do not distinguish integral divisors and line bundles on smooth varieties. In particular, if $L$ is an integral divisor on a smooth variety $V$, then sometimes we denote $h^0(V, \CO_V(L))$ by $h^0(V, L)$. 

If $C$ is a smooth hyperelliptic curve, we use $g_2^1$ to denote an effective divisor $D$ on $C$ with $\deg(D)=2$ and $h^0(C, D) = 2$.

\subsubsection{Horizontal and vertical divisor} \label{horizontalvertical} Let $f: X \to Y$ be a fibration between two varieties $X$ and $Y$. Let $D$ be a prime divisor on $X$. We say that $D$ is \emph{horizontal (resp. vertical) with respect to $f$}, if $f(D) = Y$ (resp. $f(D) \subsetneq Y$). In general, a ($\QQ$-)divisor $D$ on $X$ is \emph{horizontal (resp. vertical) with respect to $f$} if $D$ is a ($\QQ$-)linear combination of prime divisors that are horizontal (resp. vertical) with respect to $f$. For any $\QQ$-divisor $D$ on $X$, we can uniquely write
$$
D = D^H + D^V,
$$
where $D^H$ (resp. $D^V$) is horizontal (resp. vertical) with respect to $f$. We call $D^H$ (resp. $D^V$) the horizontal (resp. vertical) part of $D$ with respect to $f$. 

\subsection{Structure of the paper} This paper is organized as follows. From Section \ref{generalresult} to Section \ref{moreinequality}, we focus on the construction of the tree-like filtration for nef divisors on varieties fibered by curves. Furthermore, we deduce a number of numerical inequalities which will serve for the proof of Theorem \ref{explicitclifford}. The proof of Theorem \ref{explicitclifford} will be presented in Section \ref{Clifforderror} and \ref{proofClifford}. Proofs of Theorem \ref{main}, \ref{main2} and \ref{main3} are presented in Section \ref{proofmain}. In Section \ref{examples}, we give several interesting examples. Those examples will explain more explicitly all remarks we have made before. Corollary \ref{2n-1!} will be proved in Section \ref{proofcoro}. Finally, we raise a question of Reid's type about irregular varieties of non-maximal Albanese dimension in Section \ref{question}.

\subsection*{Acknowledgement} The author would like to thank Christopher Hacon and Zhi Jiang for their interest in this paper. The author also would like to thank Miguel Barja for communications on some results in \cite{Barja_Severi,Barja_Pardini_Stoppino}, as well as Rita Pardini and Stefano Vidussi for their valuable comments on the first version of the paper. Special thanks go to Yong Hu for showing the interesting example in \cite{Hu2016} and for his generosity allowing us to apply his construction in this paper, and to Zhixian Zhu for inspiring discussions on toric varieties.

Part of this paper was written when the author was visiting Beijing International Center of Mathematical Research in 2017. The author would like to thank Chenyang Xu for the invitation and the institute for its hospitality.

The work was funded by a Leverhulme Trust Research Project Grant ECF-2016-269.

\section{Nef divisor on varieties fibered over curves} \label{generalresult}

In this section, we assume that $f: X \to B$ is a fibration from a smooth variety $X$ of dimension $n$ to a smooth curve $B$ with a general fiber $F$. Let $L \ge 0$ be a nef divisor on $X$. Recall that we have the following theorem which is just a slightly simpler reformulation of \cite[Theorem 2.3]{Zhang_Severi}.

\begin{theorem}  \cite[Theorem 4.1]{Zhang_Slope} \label{filtration}
	Let $f: X \to B$, $F$ and $L$ be as above. Then there is a birational morphism $\sigma: X_L \to X$ and a sequence of triples
	$$
	\{(L_i, Z_i, a_i), \quad i=0, 1, \cdots, N\}
	$$
	on $X_L$ with the following properties:
	\begin{itemize}
		\item $(L_0, Z_0, a_0)=(\sigma^*L, 0, \nti_{f_L}(L_0))$ where $f_L: X_L \stackrel{\sigma}{\to} X \stackrel{f}{\to} B$ is the induced fibration. 
		\item For any $i=0, \cdots, N-1$, there is a decomposition
		$$
		|L_i-a_iF_L|= |L_{i+1}| + Z_{i+1}
		$$
		such that $Z_i \ge 0$ is the fixed part of $|L_i-a_iF_i|$ and that the movable part $|L_{i+1}|$ of $|L_i-a_iF_i|$ is base point free. Here $F_L = \sigma^*F$ denotes general fiber of $f_L$, and $a_{i+1} = \nti_{f_L}(L_{i+1})$.
		\item We have $h^0(X_L, L_N-a_NF_N)=0$.
	\end{itemize}
\end{theorem}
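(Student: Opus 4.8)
\emph{Plan.} The plan is to build $X_L$ and the triples by a finite iteration producing a tower of smooth birational models $X = Y_0 \leftarrow Y_1 \leftarrow \cdots \leftarrow Y_N$. At stage $i$ one carries a nef divisor $M_i$ on $Y_i$ (with $M_0 = L$), forms $a_i := \nti_{f_i}(M_i)$ for the induced fibration $f_i \colon Y_i \to B$ with general fibre $F_i$, and inspects the linear system $|M_i - a_i F_i|$. If $h^0(Y_i, M_i - a_i F_i) = 0$ the process stops with $N = i$; otherwise one takes, by Hironaka, a birational morphism $\nu_{i+1} \colon Y_{i+1} \to Y_i$ from a smooth $Y_{i+1}$ resolving the base locus of that system, so that on $Y_{i+1}$ one has $\nu_{i+1}^*(M_i - a_i F_i) = M_{i+1} + W_{i+1}$ with $|M_{i+1}|$ base point free and $W_{i+1} \ge 0$ its fixed part, and one continues from $(Y_{i+1}, M_{i+1})$. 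Each $a_i$ is a well-defined positive integer: $\nt_{f_i}(M_i) \ge 0$ by nefness, and $\nt_{f_i}(M_i) < \infty$ since $(M_i - a F_i) \cdot \Gamma < 0$ for $a \gg 0$ and any curve $\Gamma$ with $\Gamma \cdot F_i > 0$.

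\emph{Termination — the main point.} The hard part will be to see that the iteration is finite; everything else is bookkeeping. I claim $h^0(Y_{i+1}, M_{i+1}) < h^0(Y_i, M_i)$ whenever $i < N$, which forces the process to stop after at most $h^0(X, L)$ steps. Since the fixed part does not affect $h^0$ and $H^0$ is unchanged under pullback along a birational morphism of smooth varieties, $h^0(Y_{i+1}, M_{i+1}) = h^0(Y_i, M_i - a_i F_i) \le h^0(Y_i, M_i - F_i)$ using $a_i \ge 1$. On the other hand, because we did not stop at stage $i$ we have $h^0(Y_i, M_i) \ge h^0(Y_i, M_i - a_i F_i) > 0$, and a nonzero section of $M_i$ cannot vanish along a general fibre $F_i$: its divisor has finitely many prime components, while $B$ is a curve and so has infinitely many (smooth, hence irreducible) fibres. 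Hence $H^0(Y_i, M_i) \to H^0(F_i, M_i|_{F_i})$ is nonzero for general $F_i$, which gives $h^0(Y_i, M_i - F_i) < h^0(Y_i, M_i)$ and completes the claim.

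\emph{Conclusion.} Finally I would set $X_L := Y_N$, with $\sigma \colon X_L \to X$ and $\sigma_i \colon X_L \to Y_i$ the structure morphisms, and put $L_i := \sigma_i^* M_i$, $Z_i := \sigma_i^* W_i$ (with $W_0 = 0$), $F_L := \sigma^* F$. Pulling back the identity $\nu_{i+1}^*(M_i - a_i F_i) = M_{i+1} + W_{i+1}$ to $X_L$ and comparing dimensions of the relevant linear systems (again since $h^0$ is preserved under birational pullback) gives $|L_i - a_i F_L| = |L_{i+1}| + Z_{i+1}$ with $|L_{i+1}|$ base point free and $Z_{i+1}$ the fixed part; and because a birational morphism of smooth projective varieties both preserves and reflects nefness, $\nt_{f_L}(L_i) = \nt_{f_i}(M_i)$, so $a_i = \nti_{f_L}(L_i)$, while $h^0(X_L, L_N - a_N F_L) = 0$ by the stopping condition. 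Together with $(L_0, Z_0, a_0) = (\sigma^* L, 0, \nti_{f_L}(L_0))$ these are precisely the three asserted properties. (As the statement already indicates, this is a convenient repackaging of \cite[Theorem 4.1]{Zhang_Slope}, equivalently \cite[Theorem 2.3]{Zhang_Severi}, so in the paper it suffices to cite those; the sketch above records the underlying argument.)
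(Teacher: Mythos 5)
Your reconstruction is correct and matches the standard iterative-blowup construction that underlies \cite[Theorem~4.1]{Zhang_Slope} (equivalently \cite[Theorem~2.3]{Zhang_Severi}), which the paper simply cites rather than reproves: resolve the base locus at each stage, observe that $h^0$ strictly drops (via the observation that a nonzero section of a nef divisor cannot vanish along a general fibre, so $h^0(M_i-a_iF_i)\le h^0(M_i-F_i)<h^0(M_i)$), and then transport the whole tower to the final model $X_L=Y_N$ using that pullback along a proper birational morphism of smooth varieties preserves $h^0$, preserves and reflects nefness, and sends base-point-free systems to base-point-free systems.
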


Briefly speaking, we obtain from the above theorem a filtration
$$
\sigma^* L = L_0 > L_1 > \cdots > L_N \ge 0
$$
of nef divisors on a birational model $X_L$ of $X$. For simplicity, we still denote by $F$ a general fiber of $f_L: X_L \to B$ in the rest of this section.

\begin{prop} \label{h0} 
	We have the following inequality:
	$$
	h^0(X, L) \le \sum_{i=0}^{N} a_i h^0(F, L_i|_F).
	$$
\end{prop}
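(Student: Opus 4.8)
The plan is to exploit the filtration from Theorem \ref{filtration} telescopically, comparing sections on $X_L$ fiber by fiber. Since $\sigma$ is birational between smooth varieties, $h^0(X,L) = h^0(X_L, \sigma^*L) = h^0(X_L, L_0)$, so it suffices to bound $h^0(X_L, L_0)$. The key mechanism is that each step of the filtration removes a definite amount $a_i F$ of ``vertical'' content, and the vertical direction is the fibration $f_L$ over the curve $B$. So first I would record the two elementary comparisons that drive the argument: (a) for a nef divisor $M$ on $X_L$ whose restriction to a general fiber $F$ is $M|_F$, one has the fibered estimate $h^0(X_L, M) \le (\nt_{f_L}(M) + 1)\, h^0(F, M|_F) = \nti_{f_L}(M)\, h^0(F, M|_F)$, obtained by looking at the exact sequences for the linear system on $B$ (pushing forward and using that $f_{L*}\CO_{X_L}(M)$ is a sheaf on the curve $B$ whose rank is $h^0(F, M|_F)$, together with the fact that $M - \nti_{f_L}(M) F$ is not nef, hence—after the filtration normalization—has no sections moving it further in that direction); and (b) for the decomposition $|L_i - a_i F| = |L_{i+1}| + Z_{i+1}$ with $Z_{i+1} \ge 0$ fixed, one has $h^0(X_L, L_i - a_i F) = h^0(X_L, L_{i+1})$.

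Next I would run the telescoping. Writing $M_i := L_i$ and using the standard short exact sequence $0 \to \CO_{X_L}(L_i - a_i F) \to \CO_{X_L}(L_i) \to \CO_{a_i F}(L_i|_{a_i F}) \to 0$ (or more precisely peeling off one copy of $F$ at a time), one gets
\begin{equation*}
	h^0(X_L, L_i) \le a_i\, h^0(F, L_i|_F) + h^0(X_L, L_i - a_i F) = a_i\, h^0(F, L_i|_F) + h^0(X_L, L_{i+1}),
\end{equation*}
where the peeling is justified because $L_i$ is nef so its restriction to each of the $a_i$ fiber-copies contributes at most $h^0(F, L_i|_F)$, and the last equality is comparison (b). Here one must take a little care: the fibers being peeled off are general fibers $F$ (the filtration is set up on the model $X_L$ precisely so that $a_i F$ can be taken as a sum of general, hence reduced and irreducible, fibers), so each graded piece $\CO_F(L_i|_F)$ really does have $h^0 \le h^0(F, L_i|_F)$. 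Iterating from $i = 0$ to $i = N$ and using the terminal condition $h^0(X_L, L_N - a_N F) = 0$ from Theorem \ref{filtration}, the telescope collapses to
\begin{equation*}
	h^0(X, L) = h^0(X_L, L_0) \le \sum_{i=0}^{N} a_i\, h^0(F, L_i|_F),
\end{equation*}
which is the claim.

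The main obstacle I anticipate is justifying the fiber-by-fiber peeling cleanly, i.e.\ making sure that ``subtract $a_i F$'' can genuinely be analyzed as $a_i$ successive subtractions of a general fiber, each contributing a restriction term bounded by $h^0(F, L_i|_F)$ rather than by $h^0(F, (L_i - jF)|_F) = h^0(F, L_i|_F)$ (these are in fact equal since $F|_F$ is trivial, which is exactly why the bound is uniform in $j$), and that no sections are lost or double-counted when passing between $|L_i - a_i F|$ and $|L_{i+1}|$. This is where the specific construction in Theorem \ref{filtration}—that $|L_{i+1}|$ is the movable part and $Z_{i+1}$ the fixed part, both on the fixed birational model $X_L$—is essential: it guarantees $h^0$ is preserved across the decomposition and that the $a_i$ are exactly the integral nef thresholds, so the inductive step is valid at every stage. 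Everything else is a routine induction on $i$.
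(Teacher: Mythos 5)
Your proposal is correct and takes essentially the standard telescoping approach via the fiber-peeling exact sequence $0 \to \CO_{X_L}(L_i - F) \to \CO_{X_L}(L_i) \to \CO_F(L_i|_F) \to 0$, combined with the fixed-part decomposition and the terminal vanishing $h^0(X_L, L_N - a_N F) = 0$; this is the argument in the cited reference \cite[Proposition 2.6(1)]{Zhang_Severi} that the paper defers to. One harmless imprecision worth flagging: your auxiliary remark (a) asserts $\nt_{f_L}(M)+1 = \nti_{f_L}(M)$, which holds only when $\nt_{f_L}(M)$ is an integer (by definition $\nti_{f_L}(M) = \rounddown{\nt_{f_L}(M)}+1$), and non-nefness of $M - \nti_{f_L}(M)F$ does not by itself force $h^0$ to vanish; fortunately remark (a) plays no role in your actual telescope, which correctly uses only the exact sequence, property (b), and the terminal condition from Theorem \ref{filtration}.
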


\begin{proof}
	See \cite[Proposition 2.6 (1)]{Zhang_Severi}.
\end{proof}

For any $0 \le i \le N$, write
$$
L'_i := L_i-(a_i-1)F.
$$
By the definition of the nef threshold in Section \ref{nefthreshold}, $L'_i$ is a nef divisor on $X_L$, and we have 
\begin{equation}
	L'_i = a_{i+1} F + Z_{i+1}+L'_{i+1}. \label{l'i}
\end{equation}
In the following, let $P$ be any nef $\QQ$-divisor on $X$ and denote $P_0 = \sigma^* P$.

\begin{prop} \label{numericalinequality}
	We have the following numerical inequalities:
	\begin{itemize}
		\item [(1)] $\displaystyle L^n \ge n\sum_{i=0}^{N} a_iL^{n-1}_i F +  \sum_{i=1}^{N} \left((n-1)a_iL^{n-2}_iF +  (L'_i)^{n-1} \right) Z_i - nL^{n-1}F$;
		
		\item [(2)] $\displaystyle PL^{n-1} \ge  (n-1) \sum_{i=0}^{N} a_i P_0L^{n-2}_i F - (n-1) PL^{n-2}F$.
	\end{itemize}
\end{prop}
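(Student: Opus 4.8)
The plan is to derive both inequalities from the chain of identities relating the $L'_i$'s in \eqref{l'i}, exploiting nefness to control the error terms. The starting point is the telescoping decomposition obtained by iterating \eqref{l'i}: writing $L'_0 = L_0 + F$ (since $a_0 \ge 1$, actually $L'_0 = L_0 - (a_0-1)F$, so I should instead track $L_0$ directly) one gets
$$
L_0 + F = L'_0 = \sum_{i=1}^{N}\bigl(a_i F + Z_i\bigr) + L'_N + a_0 F \cdot(\text{correction}),
$$
so that, after bookkeeping the first term separately, $L_0 = \sum_{i=0}^{N} a_i F + \sum_{i=1}^{N} Z_i + L'_N - F$, where $L'_N = L_N - (a_N-1)F$ is nef and, by the last bullet of Theorem \ref{filtration}, has the property that $h^0(X_L, L'_N - F) = 0$. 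For the numerical estimates we will not need $h^0$; we only need that $L'_N$ is nef, hence $(L'_N)^j \cdot(\text{nef})^{n-j}\ge 0$.

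For part (1): intersect the identity $L_0 = \sum_{i=0}^N a_i F + \sum_{i=1}^N Z_i + L'_N - F$ with $L_0^{n-1}$. This gives $L^n = L_0^n = \sum_i a_i L_0^{n-1} F + \sum_i L_0^{n-1} Z_i + L_0^{n-1} L'_N - L_0^{n-1}F$. Now I must convert the "$L_0^{n-1}$" factors into the "$L_i^{n-1}$", "$L_i^{n-2}$" factors appearing in the statement. The mechanism is: $L_0 \ge L_i$ in the sense that $L_0 - L_i = \sum_{j\le i}(a_j F + Z_j) - (\text{shift})$ is an effective sum of nef/effective classes, so replacing one factor of $L_0$ by $L_i$ in a product against nef classes only decreases the intersection number; iterating, $L_0^{n-1}F \ge$ a sum telescoping into the $L_i^{n-1}F$ terms, and similarly for the $Z_i$ contributions one peels off factors of $L_0$ down to $L_i$ (using $L_0^{n-1}Z_i \ge ((n-1)a_i L_i^{n-2} F + (L'_i)^{n-1})Z_i$ by expanding $L_0 = a_i F + Z_i + L'_i + (\text{lower } L\text{'s})$ near the $i$-th level and discarding nef cross-terms). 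The term $L_0^{n-1} L'_N \ge 0$ is dropped, and $-L_0^{n-1}F = -nL^{n-1}F/n \cdot n$ — more precisely one keeps exactly $-nL^{n-1}F$ after absorbing the combinatorial factor $n$ coming from the symmetric expansion $L_0^n = (\sum a_iF + \cdots)\cdot L_0^{n-1}$ repeated across all $n$ slots. Part (2) is the same computation but intersecting the identity with $P_0 L_0^{n-2}$ instead of $L_0^{n-1}$, which lowers every exponent by one and produces the factor $(n-1)$ in place of $n$; the $Z_i$ terms now contribute $P_0 L_i^{n-2} F \cdot Z_i \ge 0$ which we simply discard (this is why (2) has no $Z_i$ sum), keeping only the clean lower bound.

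The main obstacle I anticipate is the careful bookkeeping in the "peeling" step: to pass from $L_0^{n-1}Z_i$ to $\bigl((n-1)a_i L_i^{n-2}F + (L'_i)^{n-1}\bigr)Z_i$ one must expand $L_0$ as a sum of nef classes that includes $a_iF$, $Z_i$, $L'_i$ and the earlier-level pieces, raise to the $(n-1)$-st power, and argue that every monomial other than the two retained ones is a nonnegative intersection number (nef classes, the effective $Z_j$, times the effective $Z_i$, all against nef $L'$-classes) — so it can be discarded in the correct direction. One has to be slightly careful that $Z_i$ is effective but not nef, so one cannot raise $Z_i$ to high powers; fortunately it appears only linearly. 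Keeping the inequalities pointing the right way through all of this, and checking that exactly the stated $-nL^{n-1}F$ (resp.\ $-(n-1)PL^{n-2}F$) survives rather than some larger multiple, is the delicate part; everything else is a routine expansion using that a product of nef classes with at most one effective (non-nef) factor is nonnegative.
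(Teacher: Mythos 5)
There is a genuine gap, and it sits exactly where you flagged the difficulty. The coefficient $n$ in $n\sum_i a_i L_i^{n-1}F$ (and $n-1$ in part (2)) is the whole point of this proposition, and your route does not produce it. Your single-slot identity
$$L^n = L_0^{n-1}\Bigl(\textstyle\sum_i a_i F + \sum_i Z_i + L'_N - F\Bigr) = \Bigl(\textstyle\sum_i a_i\Bigr)L_0^{n-1}F + \sum_i L_0^{n-1}Z_i + L_0^{n-1}L'_N - L_0^{n-1}F$$
gives the $F$-part coefficient $1$, and the replacement $L_0^{n-1}F \ge L_i^{n-1}F$ only yields $\sum_i a_i L_i^{n-1}F$, short of the required $n\sum_i a_i L_i^{n-1}F$. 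If instead you expand $L_0^n$ fully (``all $n$ slots''), using $F^2=0$ you get $L_0^n = (L'_N + \sum Z_j)^n + n\bigl(\sum a_j - 1\bigr)L^{n-1}F$; now the $n$ appears, but it multiplies $L^{n-1}F$ rather than $L_j^{n-1}F$, and you are left having to bound $(L'_N + \sum Z_j)^n$ from below — a self-intersection of the class $L - (\sum a_j - 1)F$, which is generally \emph{not} nef, so its sign is out of control. In short, the factor $n$ cannot be extracted by decomposing $L_0$ and intersecting; it comes from a different algebraic identity.

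The paper's actual mechanism is to telescope $(L'_0)^n - (L'_N)^n = \sum_{i=0}^{N-1}\bigl[(L'_i)^n - (L'_{i+1})^n\bigr]$ and apply, at each step, the factorization
$$(L'_i)^n - (L'_{i+1})^n = (L'_i - L'_{i+1})\sum_{j=0}^{n-1}(L'_i)^j(L'_{i+1})^{n-1-j} = (a_{i+1}F + Z_{i+1})\sum_{j=0}^{n-1}(L'_i)^j(L'_{i+1})^{n-1-j}.$$
The $n$ terms of the inner sum, paired with the nef comparison $L'_i \ge L'_{i+1}$ and $F^2 = 0$, produce exactly $n a_{i+1}L_{i+1}^{n-1}F$. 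This also fixes your second worry automatically: the only non-nef factor in each step is $Z_{i+1}$, appearing to the first power, multiplied against an all-nef sum $\sum_j(L'_i)^j(L'_{i+1})^{n-1-j}$, so all discarded terms are manifestly nonnegative. Your expansion $L_0^{n-1}Z_i$ with $L_0 = L'_i + cF + \sum_{j\le i}Z_j$ does \emph{not} have this property: raising to the $(n-1)$-st power produces monomials with $Z_j^k$ for $k\ge 2$ (already $Z_i^2$ for $n=2$), and these have no sign control; your claim that the $Z$'s ``appear only linearly'' is not true of that expansion. The same issue recurs in part (2): $P_0L_0^{n-2}(\sum a_iF + \cdots)$ gives coefficient $1$, not $n-1$; the paper gets $n-1$ from the factorization of $(L'_i)^{n-1}-(L'_{i+1})^{n-1}$ with its $(n-1)$-term inner sum, and drops the $Z$-part wholesale precisely because it appears linearly against nef classes.
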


\begin{proof}
	For any $0 \le i \le N-1$, by (\ref{l'i}), we have\footnote{Here and in the rest of the paper, we will always use the following inequality: if $A_1, \cdots, A_n$, $B_1, \cdots, B_n$ are nef divisors and $B_i-A_i$ is pseudo-effective for any $i$, then $B_1 \cdots B_n \ge A_1 \cdots A_n$.}
	\begin{eqnarray*}
		(L'_i)^n-(L'_{i+1})^n & = & (L'_i - L'_{i+1}) \sum_{j=0}^{n-1} (L'_i)^j (L'_{i+1})^{n-1-j} \\
		& = & (a_{i+1} F + Z_{i+1}) \sum_{j=0}^{n-1} (L'_i)^j (L'_{i+1})^{n-1-j} \\
		& \ge & a_{i+1} \left(L_{i+1}^{n-1} F + \sum_{j=1}^{n-1}L_i^j L_{i+1}^{n-1-j} F\right) + (L'_{i+1})^{n-1}Z_{i+1} \\
		& \ge & a_{i+1} \left(L_{i+1}^{n-1}F + (n-1) L_i L_{i+1}^{n-2}F \right) +  (L'_{i+1})^{n-1}Z_{i+1} \\
		& = & n a_{i+1}L_{i+1}^{n-1} F + (n-1) a_{i+1} L_{i+1}^{n-2} F Z_{i+1} + (L'_{i+1})^{n-1}Z_{i+1}.
	\end{eqnarray*}
	Sum over all above $i$, and notice that $(L'_N)^n \ge 0$ and
	$$
	L^n = L_0^n = (L'_0)^n + n(a_0-1)L_0^{n-1}F.
	$$
	Thus the proof of (1) is completed.
	
	The proof of (2) goes in a similar way. We sketch it here. In fact, we can deduce that
	$$
	P_0(L'_i)^{n-1} - P_0(L'_{i+1})^{n-1} \ge (n-1)a_{i+1}P_0L_{i+1}^{n-2}F
	$$
	for any $0 \le i \le N-1$ using a computation very similar to the above. On the other hand,
	$$
	PL^{n-1} - P_0(L'_0)^{n-1} \ge (a_0-1)(n-1) P_0L_0^{n-2}F.
	$$
	Thus the proof of (2) is completed.
\end{proof}

\section{Tree-like filtration for nef divisors on varieties fibered by curves} \label{iteratedfiltration}

In this section, our goal is to construct the tree-like filtration for nef divisors on varieties fibered by curves.

\subsection{Initial data} \label{setting} Let $f: X \to Y$ be a fibration from a smooth variety $X$ of dimension $n$ to a smooth variety $Y$ of dimension $n-1$ with general fiber $C$ being smooth. We fix a smooth divisor $H$ on $Y$ such that 
\begin{itemize}
	\item $|H|$ is base point free;
	\item $H$ is also big, i.e., $H^{n-1} > 0$.
\end{itemize}
This is guaranteed by Bertini's theorem. Throughout this section, we assume that $L$ is an effective and nef divisor on $X$.

\subsection{First step} \label{firststep} Choose a very general pencil in $|H|$. Let $\tilde{Y}_0$ be the blowing up of the indeterminacies of this pencil, and let $\tilde{X}_0 = X \times_Y \tilde{Y}_0$. Here $\tilde{X}_0$ is indeed the blowing up of the indeterminacies of the pullback of the above pencil. From this process we obtain a tower of fibrations
$$
\xymatrix{
	\tilde{X}_0 \ar[r]^{f_0} \ar@/^1.5pc/[rr]^{\pi_0} & \tilde{Y}_0 \ar[r] & \PP^1
	}
$$
where a general fiber $X_1$ of $\pi_0$ is isomorphic to $f^*H$. Write $Y_1 = f_0(X_1)$. 

According to Theorem \ref{filtration}, after replacing $\tilde{X}_0$ by an appropriate blowing up, there is a filtration of nef divisors
$$
L_0 > L_1 > \cdots > L_N \ge 0
$$
on $\tilde{X}_0$ satisfying the conditions therein. Here $L_0$ denotes the pullback of $L$ via the above sequence of blow-ups $\tilde{X}_0 \to X$. We also denote by $\tilde{H}_0$ the pullback of $H$ via $\tilde{Y}_0 \to Y$.

\subsection{Second step} Write $Y_1 = f_0(X_1)$. Now we do the same operation for the fibration $f_0|_{X_1}: X_1 \to Y_1$ as in \S \ref{firststep}. By abuse of the notation, $C$ still denotes a general fiber of $f_0|_{X_1}$. 

Under the above notation, we write $H_1 = \tilde{H}_0|_{Y_1}$. Choose a very general pencil in $|H_1|$. Similar to \S \ref{firststep}, let $\tilde{Y}_1$ be the blowing up of the indeterminacies of this pencil, and let $\tilde{X}_1 = X_1 \times_{Y_1} \tilde{Y}_1$. Then we obtain another tower of fibrations
$$
\xymatrix{
	\tilde{X}_1 \ar[r]^{f_1} \ar@/^1.5pc/[rr]^{\pi_1} & \tilde{Y}_1 \ar[r] & \PP^1
}
$$
where a general fiber $X_2$ of $\pi_1$ is isomorphic to $f_1^* H_1$. 

Apply Theorem \ref{filtration} again. We know that after replacing $\tilde{X}_1$ by an appropriate blowing up,  for any $0 \le i_1 \le N$, there is a filtration of nef divisors
$$
L_{i_10} > L_{i_11} > \cdots > L_{i_1N_{i_1}} \ge 0
$$
on $\tilde{X}_1$ satisfying the conditions in Theorem \ref{filtration}. Here $L_{i_10}$ denotes the pullback of the divisor $L_{i_1}|_{X_1}$ via the composition of blow-ups $\tilde{X}_1 \to X_1$. Similar to the above, let $\tilde{H}_1$ be the pullback of $H_1$ via $\tilde{Y}_1 \to Y_1$.

\subsection{Intermediate steps} 
Write $Y_2 = f_1(X_2)$, and we can keep doing the operation as in \S \ref{firststep} repeatedly. Since $\dim \tilde{X}_i = n - i < \dim \tilde{X}_{i-1}$, the whole process must terminate. Finally, we obtain $n-2$ towers of fibrations
$$
\xymatrix{
	\tilde{X}_i \ar[r]^{f_i} \ar@/^1.5pc/[rr]^{\pi_i} & \tilde{Y}_i \ar[r] & \PP^1
}
$$
for $0 \le i \le n-3$. 

\subsection{The last step} \label{laststep} 
Notice that $\dim \tilde{Y}_{n-3} = 2$, i.e., $\tilde{Y}_{n-3}$ is a fibered surfaces over $\mathbb{P}^1$ whose general fiber $Y_{n-2}$ is isomorphic to $H_{n-3}$. Let $X_{n-2} = f^*_{n-3}Y_{n-2}$. Therefore, we have the last fibration being
$$
\pi_{n-2} : (\tilde{X}_{n-2}: =) X_{n-2} \longrightarrow Y_{n-2} (=: \tilde{Y}_{n-2}),
$$
and this is a surface fibration. Recall that from the whole process, we also obtain a bunch of nef divisors $L_{i_1 i_2 \cdots i_{n-2}}$ on $\tilde{X}_{n-3}$. Since the number of these nef divisors is finite, by Theorem \ref{filtration}, replacing $\tilde{X}_{n-2} = X_{n-2}$ by an appropriate common blowing up, we may assume that for any multi-index $\alpha = i_1 i_2 \cdots i_{n-2}$ of length $n-2$, there is a filtration of nef divisors
$$
L_{\alpha 0} > L_{\alpha 1} > \cdots > L_{\alpha N_{\alpha}} \ge 0
$$
on $\tilde{X}_{n-2}$ which is similar to all the above. Here  $L_{\alpha 0}$ denotes the pullback of $L_{\alpha}$ via the (composition of) morphism $\tilde{X}_{n-2} \to \tilde{X}_{n-3}$. 

By abuse of the notation, $C$ still denotes a general fiber of $\pi_{n-2}$. Sometimes we denote $X_{n-1} = C$ in order to keep the coherence with the notation in this section.

\subsection{Summarization} \label{tree} Before we go further, let us stop here and summarize the data obtained from the whole process. As we have said before, the whole process gives rise to $n-2$ towers of fibrations $\pi_0, \cdots, \pi_{n-3}$ plus a surface fibration $\pi_{n-2}$. From those fibrations we obtain a tree-like filtration as follows:

\begin{center}
	\begin{tikzpicture}
	[level 1/.style={sibling distance=30mm},
	level 2/.style={sibling distance=8mm},
	level 3/.style={sibling distance=5mm}]
	\node {$L$}
	child {node {$L_0$}
		child {node {$L_{00}$}
			child {node {...}}
			child {node {...}}
			}
		child {node {$L_{01}$}
			child {node {...}}
			child {node {...}}
		    }
		child {node {$\cdots$}
			child {node {...}}
			child {node {...}}
			}
		child {node {$L_{0N_0}$}
			child {node {...}}
			child {node {...}}
			}
		}
	child {node {$\cdots$}
		child {node {$\cdots$}
			child {node {...}}
			child {node {...}}
			}
		child {node {$\cdots$}
			child {node {...}}
			child {node {...}}
			}
		}
	child {node {$L_{i_1}$}
		child {node {$L_{i_10}$}
			child {node {...}}
			child {node {...}}
			}
		child {node {$L_{i_11}$}
			child {node {...}}
			child {node {...}}
			}
		child {node {$\cdots$}
			child {node {...}}
			child {node {...}}
			}
		child {node {$L_{i_1N_{i_1}}$}
			child {node {...}}
			child {node {...}}
			}
		}
	child {node {$\cdots$}
		child {node {$\cdots$}
			child {node {...}}
			child {node {...}}
			}
		child {node {$\cdots$}
			child {node {...}}
			child {node {...}}
			}
		};
	\end{tikzpicture}
\end{center}

More importantly, it gives rise to a set of nef thresholds
$$
a_{i_1 \cdots i_m} := \nti_{\pi_{m-1}}(L_{i_1 \cdots i_m})
$$
for any $1 \le m \le n-1$. For any multi-index $i_1 \cdots i_m$ of length $m$, we also write
$$
L'_{i_1 \cdots i_m} = L_{i_1 \cdots i_m} - (a_{i_1 \cdots i_m}-1) X_{m}.
$$
Keep in mind that $L'_{i_1 \cdots i_m}$ is a nef divisor on $\tilde{X}_{m-1}$.

All above notation in this section will be used throughout this paper.

\section{More inequalities} \label{moreinequality}
In this section, based on the tree-like filtration constructed in Section \ref{iteratedfiltration}, we prove some inequalities which will be used to prove the main results.

\subsection{Numerical inequalities from horizontal base loci} \label{horizontalbaseloci}

Let $f: X \to Y$ and $L$ be as in Section \ref{setting}. According to Theorem \ref{filtration} and Section \ref{firststep}, we obtain a tower of fibrations
$$
\xymatrix{
	\tilde{X}_0 \ar[r]^{f_0} \ar@/^1.5pc/[rr]^{\pi_0} & \tilde{Y}_0 \ar[r] & \PP^1
}
$$
with $X_1$ as a general fiber of $\pi_0$. This gives a filtration of nef divisors
$$
L_0 > L_1 > \cdots > L_N \ge 0
$$
as well as a set of base loci $\{Z_1, \cdots, Z_N\}$ on $\tilde{X}_0$. Recall that $f_0$ is a fibration of curves with a general fiber $C$. Let $1 \le \lambda_1 < \cdots < \lambda_q \le N$ be all indices such that $Z_{\lambda_j}$ has a nonzero horizontal part with respect to $f_0$, i.e.,
$$
\deg(Z_{\lambda_j}|_C) > 0.
$$
In the following, we write $Z^H_{\lambda_j}$ as the horizontal part of $Z_{\lambda_j}$ with respect to $f_0$ (see Section \ref{horizontalvertical} for the definition). We also set $\lambda_0 = 0$ and $\lambda_{q+1} = N+1$. 

\begin{prop} \label{ln-1z}
	Let $D \ge 0$ be any horizontal $\QQ$-divisor with respect to $f_0$. Then
	$$
	\left((n-1)a_{\lambda_j}L_{\lambda_j}^{n-2}X_1 + (L'_{\lambda_j})^{n-1}\right)D \ge (n-1) \sum_{i=\lambda_j}^{\lambda_{j+1}-1} a_i (L_i|_{X_1})^{n-2}(D|_{X_1})
	$$ 
	for any $0 \le j \le q$. In particular, 
	$$
	\left((n-1)a_{\lambda_j}L_{\lambda_j}^{n-2}X_1 + (L'_{\lambda_j})^{n-1}\right) Z_{\lambda_j} \ge (n-1) \sum_{i=\lambda_j}^{\lambda_{j+1}-1} a_i (L_i|_{X_1})^{n-2}(Z_{\lambda_j}^H|_{X_1}).
	$$ 
\end{prop}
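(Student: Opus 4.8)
The statement is an intersection-theoretic estimate comparing products computed on $\tilde X_0$ with products on the general fiber $X_1$. The plan is to run an induction down the filtration, between consecutive ``horizontal'' indices $\lambda_j$ and $\lambda_{j+1}-1$, using the relation \eqref{l'i} on $\tilde X_0$. Concretely, recall that for $\lambda_j \le i \le \lambda_{j+1}-2$ we have $L_i' = a_{i+1}X_1 + Z_{i+1} + L_{i+1}'$, and by the choice of $\lambda_j$'s, each intermediate $Z_{i+1}$ (for $i+1$ strictly between consecutive $\lambda$'s) has zero horizontal part, hence is a vertical $\QQ$-divisor with respect to $f_0$. The strategy is therefore to peel off one step at a time: I would bound $\bigl((n-1)a_{\lambda_j}L_{\lambda_j}^{n-2}X_1 + (L_{\lambda_j}')^{n-1}\bigr)D$ from below by the corresponding expression with index $\lambda_j$ replaced by $\lambda_j+1$ plus the term $(n-1)a_{\lambda_j}(L_{\lambda_j}|_{X_1})^{n-2}(D|_{X_1})$, and iterate.

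First I would set up the single-step inequality. Fix an index $i$ with $\lambda_j \le i \le \lambda_{j+1}-2$. Expanding $(L_i')^{n-1}D$ via $L_i' = a_{i+1}X_1 + Z_{i+1} + L_{i+1}'$ as a telescoping difference (exactly as in the proof of Proposition \ref{numericalinequality}(2)), the term $a_{i+1}X_1$ contributes $(n-1)a_{i+1}(L_{i+1}|_{X_1})^{n-2}(D|_{X_1})$ after restricting to $X_1$ and using that $X_1^2 = 0$ numerically (it is a fiber of $\pi_0$); the term $Z_{i+1}L_{i+1}'{}^{n-2}D$ is nonnegative by nefness, and since we want an estimate on $X_1$ we may discard it or, more carefully, note it only helps. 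The upshot is
$$
(L_i')^{n-1}D + (n-1)a_i L_i^{n-2}X_1 D \ge (L_{i+1}')^{n-1}D + (n-1)a_{i+1}L_{i+1}^{n-2}X_1 D + (n-1)a_i(L_i|_{X_1})^{n-2}(D|_{X_1}),
$$
where one must be a little careful that $a_i L_i^{n-2}X_1 D$ versus $a_i(L_i|_{X_1})^{n-2}(D|_{X_1})$ agree because $X_1$ is a fiber (so restriction to $X_1$ computes the intersection number on $X_1$ of the restricted classes). Summing this over $i = \lambda_j, \dots, \lambda_{j+1}-2$ telescopes the left-hand pieces down to the index $\lambda_{j+1}-1$, and at the final index $\lambda_{j+1}-1$ I would simply use $(L_{\lambda_{j+1}-1}')^{n-1}D \ge 0$ (nefness) together with one more application to absorb $(n-1)a_{\lambda_{j+1}-1}(L_{\lambda_{j+1}-1}|_{X_1})^{n-2}(D|_{X_1})$, yielding exactly the claimed sum $\sum_{i=\lambda_j}^{\lambda_{j+1}-1}$.

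For the ``in particular'' statement I would take $D = Z_{\lambda_j}^H$, the horizontal part of $Z_{\lambda_j}$, and then on the left-hand side replace $D = Z_{\lambda_j}^H$ by the full $Z_{\lambda_j} = Z_{\lambda_j}^H + Z_{\lambda_j}^V$: since $L_{\lambda_j}$ and $L_{\lambda_j}'$ are nef and $Z_{\lambda_j}^V \ge 0$, the extra vertical term $\bigl((n-1)a_{\lambda_j}L_{\lambda_j}^{n-2}X_1 + (L_{\lambda_j}')^{n-1}\bigr)Z_{\lambda_j}^V$ is nonnegative, so enlarging $D$ to $Z_{\lambda_j}$ only increases the left side while leaving $D|_{X_1} = Z_{\lambda_j}^H|_{X_1}$ unchanged on the right (the vertical part restricts to zero, or rather contributes $0$ to the relevant intersection on $X_1$, because its restriction to a general $X_1$ is supported on fibers of $f_0|_{X_1}$ and $\dim$-count kills it).

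The main obstacle I anticipate is bookkeeping with the restriction maps: making sure that every time I write $L_i^{n-2}X_1 D$ and claim it equals $(L_i|_{X_1})^{n-2}(D|_{X_1})$, this is legitimate (it is, because $X_1$ is a general fiber of the morphism $\pi_0$ to a curve, so $[X_1]$ is the class of a fiber and the projection/restriction formula applies), and that the intermediate base loci $Z_{i+1}$ for $\lambda_j < i+1 < \lambda_{j+1}$ really are purely vertical so that their restrictions to $X_1$ do not contribute — this is exactly the defining property of the indices $\lambda_1 < \cdots < \lambda_q$. A secondary subtlety is the boundary index $\lambda_{j+1}-1$: one needs $(L_{\lambda_{j+1}-1}')^{n-1}D \ge 0$, which is immediate, but the last single-step expansion producing the term with index $\lambda_{j+1}-1$ must be included in the telescoping sum, so I would be careful to let the summation range run all the way to $\lambda_{j+1}-1$ rather than stopping at $\lambda_{j+1}-2$. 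Modulo this careful indexing, the argument is a direct imitation of Proposition \ref{numericalinequality}(2) carried out fiberwise.
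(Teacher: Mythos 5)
Your proposal is correct and takes essentially the same route as the paper: your single-step inequality (once you cancel the extraneous $(n-1)a_i L_i^{n-2}X_1 D$ terms, which appear on both sides) is exactly the paper's telescoping step, and you finish by adding back $(n-1)a_{\lambda_j}L_{\lambda_j}^{n-2}X_1 D$ and discarding the nonnegative boundary term $(L'_{\lambda_{j+1}-1})^{n-1}D \ge 0$, just as the paper does. Two small imprecisions worth correcting: discarding the $Z_{i+1}$ contribution requires not merely nefness but the fact that $Z_{i+1}$ is vertical and $D$ horizontal with respect to $f_0$, so they share no component and $Z_{i+1}\cdot D$ is an effective cycle (you gesture at verticality later, but this is the operative reason and should be stated at the single-step); and the parenthetical assertion that $Z_{\lambda_j}^V$ ``contributes $0$ to the relevant intersection on $X_1$'' is false in general but also unnecessary --- you never change the right-hand side, since the first inequality is applied with $D = Z_{\lambda_j}^H$ and only the left-hand side is subsequently enlarged from $Z_{\lambda_j}^H$ to $Z_{\lambda_j}$.
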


\begin{proof}
	It is easy to see that the first inequality implies the second one as $Z_{\lambda_j} \ge Z_{\lambda_j}^H$. Moreover, the result is trivial if $\lambda_{j+1} - 1= \lambda_j$. Thus in the following, we may assume that $\lambda_{j+1} - 1 > \lambda_j$. Notice that under this assumption, we only need to prove that
	$$
	(L_{\lambda_j}')^{n-1} D \ge \sum_{i=\lambda_j+1}^{\lambda_{j+1}-1} a_i (L_i|_{X_1})^{n-2}(D|_{X_1}).
	$$
	
	The key point here is that for $\lambda_j < i \le \lambda_{j+1} - 1$, $Z_i$ intersects with $D$ properly as $Z_i$ is vertical with respect to $f_0$. As a result, for such $i$, we deduce from (\ref{l'i}) that 
	\begin{eqnarray*}
		(L'_{i-1})^{n-1} D & = & (L'_{i-1})^{n-2} (L'_i + a_iX_1 + Z_i)D \\
		& \ge & (L'_{i-1})^{n-2} (L'_i + a_iX_1) D \\
		& \cdots & \cdots \\
		& \ge & (L'_i + a_iX_1)^{n-1} D \\
		& = & (n-1)a_i(L_i|_{X_1})^{n-2} (D|_{X_1}) + (L'_i)^{n-1} D.
	\end{eqnarray*}
	Inductively, we obtain that
	\begin{eqnarray*}
		(L'_{\lambda_j})^{n-1} D & \ge & (n-1) \sum_{i=\lambda_j+1}^{\lambda_{j+1}-1} a_i(L_i|_{X_1})^{n-2} (D|_{X_1}) + (L'_{\lambda_{j+1}-1})^{n-1} D \\
		& \ge & (n-1) \sum_{i=\lambda_j+1}^{\lambda_{j+1}-1} a_i(L_i|_{X_1})^{n-2} (D|_{X_1}).
	\end{eqnarray*}
	Thus the whole proof is completed.
\end{proof}

\begin{prop} \label{numericalinequality'}
	Let the notation be as above. Then
	$$
	L^n \ge n \sum_{i=0}^{\lambda_1-1} a_i (L_i|_{X_1})^{n-1} + (n-1) \sum_{i = \lambda_1}^N a_i(L_{\lambda_1-1}|_{X_1})(L_i|_{X_1})^{n-2} - n (L_0|_{X_1})^{n-1}.
	$$
\end{prop}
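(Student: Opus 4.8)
The plan is to specialize the machinery of Section \ref{generalresult} to the fibration $\pi_0\colon \tilde X_0 \to \PP^1$, whose general fibre is $X_1$, and then to reprocess the fixed-part error terms by means of the horizontal estimate Proposition \ref{ln-1z}. First I would apply Proposition \ref{numericalinequality}(1) with the general fibre $F$ taken to be $X_1$. Since $X_1$ is a general fibre of $\pi_0$ it lies in the smooth locus and has trivial normal bundle in $\tilde X_0$, so $X_1 \cdot X_1 = 0$ and every intersection number against $X_1$ is computed on $X_1$ itself: $L_i^{n-1}X_1 = (L_i|_{X_1})^{n-1}$, $L^{n-1}X_1 = (L_0|_{X_1})^{n-1}$, and $L_i^{n-2}X_1 \cdot Z_i = (L_i|_{X_1})^{n-2}(Z_i|_{X_1})$. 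This already accounts for the tail $-n(L_0|_{X_1})^{n-1}$ and for the summands $n\sum_{i=0}^{\lambda_1-1}a_i(L_i|_{X_1})^{n-1}$ of the target; what remains is to show that
$$
n\sum_{i=\lambda_1}^{N}a_i(L_i|_{X_1})^{n-1}+\sum_{i=1}^{N}\Big((n-1)a_iL_i^{n-2}X_1+(L'_i)^{n-1}\Big)Z_i \ \ge\ (n-1)\sum_{i=\lambda_1}^{N}a_i(L_{\lambda_1-1}|_{X_1})(L_i|_{X_1})^{n-2}.
$$

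The bridge between the two sides is a telescoping identity on $X_1$. From \eqref{l'i} together with $X_1\cdot X_1=0$ one reads off $L_i|_{X_1}=L_{i+1}|_{X_1}+Z_{i+1}|_{X_1}$, and hence $L_{\lambda_1-1}|_{X_1}=L_i|_{X_1}+\sum_{k=\lambda_1}^{i}Z_k|_{X_1}$ for every $i\ge\lambda_1-1$. Substituting this into the right-hand side, the resulting term $(n-1)a_i(L_i|_{X_1})^{n-1}$ is absorbed into $n\sum a_i(L_i|_{X_1})^{n-1}$ (using $n\ge n-1$), and one is left with the cross terms $(n-1)a_i(Z_k|_{X_1})(L_i|_{X_1})^{n-2}$ for $\lambda_1\le k\le i\le N$. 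The horizontal part of $Z_k|_{X_1}$ is nonzero only for $k=\lambda_j$, and those contributions are exactly what Proposition \ref{ln-1z} supplies: it rewrites $\big((n-1)a_{\lambda_j}L_{\lambda_j}^{n-2}X_1+(L'_{\lambda_j})^{n-1}\big)Z_{\lambda_j}$ as a sum $(n-1)\sum a_i(L_i|_{X_1})^{n-2}(Z_{\lambda_j}^H|_{X_1})$ over the block $\lambda_j\le i\le\lambda_{j+1}-1$. The vertical cross terms come from the pieces $(n-1)a_iL_i^{n-2}X_1\cdot Z_i$, in which $Z_i|_{X_1}$ is vertical on $X_1$, and from expanding $(L'_i)^{n-1}Z_i$ by repeated use of \eqref{l'i}; all quantities that are genuinely discarded along the way — the residual $(L'_i)^{n-1}Z_j$-type terms, $(L'_N)^n$, and products of nef classes against the effective divisors $Z_i$ — are nonnegative by the ``nef implies monotone'' principle already used throughout Section \ref{generalresult}.

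The main obstacle is precisely this final matching. The factor $L_{\lambda_1-1}|_{X_1}$ drags in the entire tail $\sum_{k\le i}Z_k|_{X_1}$ of fixed parts, so one must verify, index by index, that the block-wise output of Proposition \ref{ln-1z}, the vertical contributions, and the slack between the coefficients $n$ and $n-1$ together cover every cross term $a_i(Z_k|_{X_1})(L_i|_{X_1})^{n-2}$ with $\lambda_1\le k\le i\le N$. This is where it matters that one upgrades $L_i$ uniformly to $L_{\lambda_1-1}$ — the last divisor of the filtration preceding the first horizontal fixed part — rather than to the block-dependent $L_{\lambda_j-1}$, and where one has to track how much each $(L'_i)^{n-1}Z_i$ really yields after successive applications of \eqref{l'i}. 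Everything else is routine intersection-theoretic bookkeeping of the type appearing in the proof of Proposition \ref{numericalinequality}.
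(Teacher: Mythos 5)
The paper proves this by a different, much shorter route than the one you sketch, and I don't believe your route closes. Let me first recall the paper's argument. One considers the truncated filtration $L'_{\lambda_1-1} > L_{\lambda_1} > \cdots > L_N \ge 0$, observes that $a'_{\lambda_1-1} := \nti_{f_0}(L'_{\lambda_1-1}) = 1$, and applies Proposition~\ref{numericalinequality}(2) with $P = L'_{\lambda_1-1}$. Because the leading nef threshold equals $1$, the $i=0$ summand in Proposition~\ref{numericalinequality}(2) exactly cancels the negative correction term $-(n-1)P L^{n-2}F$, and one gets cleanly
$$
(L'_{\lambda_1-1})^n \ \ge\ (n-1)\sum_{i=\lambda_1}^{N} a_i \,(L_{\lambda_1-1}|_{X_1})(L_i|_{X_1})^{n-2}.
$$
This is the whole point of the device ``$P = L'_{\lambda_1-1}$'': the factor $L_{\lambda_1-1}|_{X_1}$ in the target is produced in one stroke by treating $L'_{\lambda_1-1}$ as the auxiliary nef divisor, not by unrolling it into its fixed parts. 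The inequality $L^n - (L'_{\lambda_1-1})^n \ge n\sum_{i=0}^{\lambda_1-1}a_i(L_i|_{X_1})^{n-1} - n(L_0|_{X_1})^{n-1}$ then follows by the same telescoping as in the proof of Proposition~\ref{numericalinequality}(1), truncated at $i=\lambda_1-1$. No fixed-part bookkeeping is needed at all.

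Your approach instead starts from Proposition~\ref{numericalinequality}(1), keeps the fixed-part terms $\bigl((n-1)a_iL_i^{n-2}X_1+(L'_i)^{n-1}\bigr)Z_i$, and tries to recover the factor $L_{\lambda_1-1}|_{X_1}$ by substituting $L_{\lambda_1-1}|_{X_1} = L_i|_{X_1} + \sum_{k=\lambda_1}^i Z_k|_{X_1}$. Reorganising the resulting double sum, you must control the horizontal cross terms $(n-1)\sum_{i\ge k}a_i(Z_k^H|_{X_1})(L_i|_{X_1})^{n-2}$ for $k=\lambda_j$. Here is the gap: Proposition~\ref{ln-1z} yields
$$
\bigl((n-1)a_{\lambda_j}L_{\lambda_j}^{n-2}X_1 + (L'_{\lambda_j})^{n-1}\bigr)Z_{\lambda_j} \ \ge\ (n-1)\sum_{i=\lambda_j}^{\lambda_{j+1}-1}a_i(L_i|_{X_1})^{n-2}(Z_{\lambda_j}^H|_{X_1}),
$$
with the sum running only over the block $\lambda_j\le i\le\lambda_{j+1}-1$. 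Your substitution, however, requires the same bound with the sum running over all $i$ from $\lambda_j$ to $N$. The contributions with $i\ge\lambda_{j+1}$ are simply not present in Proposition~\ref{ln-1z} (its proof crucially breaks at $\lambda_{j+1}$, where $Z_{\lambda_{j+1}}$ acquires a horizontal component and the proper-intersection step fails), and there is no other source in your inventory that produces intersections against $Z_{\lambda_j}^H$. The ``slack'' $\sum_{i\ge\lambda_1}a_i(L_i|_{X_1})^{n-1}$ is of a different shape — it involves $L_i$, not $Z_{\lambda_j}^H$, and there is no inequality $(L_i|_{X_1})^{n-1}\ge (L_i|_{X_1})^{n-2}(Z_{\lambda_j}^H|_{X_1})$ in general. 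Likewise ``expanding $(L'_i)^{n-1}Z_i$ by repeated use of \eqref{l'i}'' only produces intersections against $Z_i$, never against the earlier $Z_{\lambda_j}$, and the discarded quantities $Z_{i+1}\cdot Z_i\cdot(\text{nef})^{n-2}$ need not be nonnegative. So the ``final matching'' you flag as the main obstacle is indeed where the argument breaks, and the missing idea is precisely the paper's: promote $L'_{\lambda_1-1}$ to the auxiliary divisor $P$ in Proposition~\ref{numericalinequality}(2) so that the factor $L_{\lambda_1-1}|_{X_1}$ appears at once, rather than trying to reassemble it from fixed parts.
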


\begin{proof}
	We have the following filtration of nef divisors
	$$
	L'_{\lambda_1 - 1} > L_{\lambda_1} > \cdots > L_N \ge 0.
	$$
	Write $a'_{\lambda_1-1}:=\nti_{f_0}(L'_{\lambda_1 - 1})$. Then it is easy to see that $a'_{\lambda_1-1} = 1$. By Proposition \ref{numericalinequality} (2) (setting $P=L_{\lambda_1-1}'$), it follows that
	\begin{eqnarray*}
		(L'_{\lambda_1 - 1})^n & \ge & (n-1) \left( a'_{\lambda_1-1} (L'_{\lambda_1-1}|_{X_1})^{n-1} +  \sum_{i = \lambda_1}^N a_i(L'_{\lambda_1-1}|_{X_1})(L_i|_{X_1})^{n-2} \right) \\
		& & - (n-1) (L'_{\lambda_1-1}|_{X_1})^{n-1} \\
		& = & (n-1) \sum_{i = \lambda_1}^N a_i(L_{\lambda_1-1}|_{X_1})(L_i|_{X_1})^{n-2}.
	\end{eqnarray*}
	Moreover, using the same proof as for Proposition \ref{numericalinequality} (1), we deduce that
	$$
	L^n - (L'_{\lambda_1 - 1})^n \ge n \sum_{i=0}^{\lambda_1-1} a_i (L_i|_{X_1})^{n-1} - n (L_0|_{X_1})^{n-1}.
	$$
	Hence the proof is completed.
\end{proof}

\subsection{Estimating nef thresholds} \label{settingp}
We need another nef divisor to estimate all nef thresholds that appear during the construction of the tree-like filtration in Section \ref{iteratedfiltration}. In this subsection, we will freely use all notation in Section \ref{iteratedfiltration}. 

Let $P \ge L$ be a nef $\QQ$-divisor on $X$ such that $\deg(P|_C) > 0$. Write $P_0 = P$ and denote by $P_m$ and $\tilde{P}_m$ the pullback of $P$ via the morphism $X_m \to X$ and $\tilde{X}_m \to X$ respectively for any $0 \le m \le n-1$. Then we have the following easy result.

\begin{prop} \label{p>l}
	For any $1 \le m \le n-1$ and any multi-index $i_1 \cdots i_m$ of length $m$, 
	$$
	\tilde{P}_{m-1}|_{X_m} = P_m \ge L_{i_1 \cdots i_m}|_{X_m}.
	$$
\end{prop}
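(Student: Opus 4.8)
The plan is to reduce to the single-step case and then apply the key monotonicity fact underlying Theorem \ref{filtration}: inside one application of the filtration construction, every divisor $L_{j+1}$ in the filtration is obtained from $L_j$ by subtracting $a_jF$ and then removing a fixed (effective) part, so $L_j - L_{j+1}$ is effective and each $L_{i_1\cdots i_m}$ is dominated, as a divisor class up to pullback, by the starting divisor of that step. I would argue by induction on $m$. For $m=1$: on $\tilde X_0$ we have $L_0 = \sigma^* L$ and $L_0 > L_1 > \cdots > L_N$ with $L_0 - L_{i_1}$ effective for every $i_1$; restricting to the general fiber $X_1 = f^*H$ (which, being general, meets the exceptional locus of $\sigma$ and all the fixed parts properly, hence $(L_0-L_{i_1})|_{X_1}$ is still effective), and using $P_0 = P \ge L$ together with $\tilde P_0 = \sigma^* P$, we get $\tilde P_0|_{X_1} = P_1 \ge L_0|_{X_1} \ge L_{i_1}|_{X_1} = L_{i_1}|_{X_1}$. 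Note here $L_{i_1}$ is a divisor on $\tilde X_0$ and, by the conventions of Section \ref{laststep}, we also view its restriction to $X_1$ as living on a common blow-up, so "$\ge$" means the difference is pseudo-effective after pullback.

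For the inductive step, suppose the statement holds for multi-indices of length $m-1$, i.e. $P_{m-1} \ge L_{i_1\cdots i_{m-1}}|_{X_{m-1}}$ on $\tilde X_{m-2}$ (hence on $\tilde X_{m-1}$ after pullback). At the $m$-th step of the construction we apply Theorem \ref{filtration} to the fibration $\pi_{m-1}$ with starting divisor $L_{i_1\cdots i_{m-1}0}$, which is the pullback of $L_{i_1\cdots i_{m-1}}|_{X_{m-1}}$; the filtration $L_{i_1\cdots i_{m-1}0} > L_{i_1\cdots i_{m-1}1} > \cdots$ has the property that $L_{i_1\cdots i_{m-1}0} - L_{i_1\cdots i_{m-1}i_m}$ is effective for every $i_m$. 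Restricting everything to a general fiber $X_m$ of $\pi_{m-1}$ and pulling $P$ back, the induction hypothesis gives $P_m = \tilde P_{m-1}|_{X_m} \ge (L_{i_1\cdots i_{m-1}}|_{X_{m-1}})|_{X_m} = L_{i_1\cdots i_{m-1}0}|_{X_m} \ge L_{i_1\cdots i_{m-1}i_m}|_{X_m}$, which is exactly the claim for length $m$.

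The one point that needs a little care — and the only real obstacle — is the behavior of restrictions under the successive blow-ups: the divisors $L_{i_1\cdots i_m}$ live on different birational models $\tilde X_{m-1}$, and "$L_{i_1\cdots i_{m-1}}|_{X_{m-1}}$" must be interpreted as its pullback to the common model $\tilde X_{m-1}$ on which the length-$m$ filtration is defined (as arranged in Sections \ref{firststep}--\ref{laststep}). Since all the maps involved are birational morphisms of smooth varieties and the fibers $X_m$ are chosen very general (so they avoid the relevant exceptional and fixed loci, or at worst meet them properly), effectivity and pseudo-effectivity of the relevant differences are preserved under restriction and pullback; this is the standard fact about nef/pseudo-effective classes already invoked repeatedly in Section \ref{generalresult}. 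With that bookkeeping in place the induction closes and the proof is complete.
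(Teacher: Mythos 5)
Your proof is correct. The paper states Proposition \ref{p>l} without proof (as an ``easy result''), and your induction on $m$---tracking how $P - L$ effective on $X$ pulls back to $\tilde{P}_0 \ge L_0$, how each filtration step gives $L_{i_1\cdots i_{m-1}0} \ge L_{i_1\cdots i_{m-1}i_m}$ by construction, and how effectivity of the differences is preserved under pullback to the next birational model and restriction to a general fiber $X_m$---is precisely the argument the paper leaves to the reader.
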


\begin{prop} \label{pmintersection}
	For any $0 \le m \le n-1$, $P_m^{n-m} = P^{n-m}(f^*H)^m$.
\end{prop}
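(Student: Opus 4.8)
The plan is to prove the identity $P_m^{n-m} = P^{n-m}(f^*H)^m$ by induction on $m$, tracking how the intersection number transforms at each stage of the tree-like construction. The base case $m=0$ is trivial since $P_0 = P$ and $(f^*H)^0$ is the empty product. For the inductive step, suppose the identity holds for some $m-1$, i.e. $P_{m-1}^{n-(m-1)} = P^{n-(m-1)}(f^*H)^{m-1}$ as a number on $X_{m-1}$. We must relate $P_m^{n-m}$, computed on $X_m$, to $P_{m-1}^{n-m+1}$, computed on $X_{m-1}$.

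First I would recall that $X_m$ is (after the blow-ups) a general fiber of the map $\pi_{m-1}\colon \tilde X_{m-1} \to \PP^1$, and that as a divisor on $\tilde X_{m-1}$ it is linearly equivalent to $\tilde H_{m-2}|_{Y_{m-1}}$ pulled back via $f_{m-1}$ — in other words $X_m$ is (a general member of) $f_{m-1}^{*}H_{m-1}$ on $\tilde X_{m-1}$, where $H_{m-1}$ is the restriction to $Y_{m-1}$ of the successive pullbacks of $H$. The divisor $\tilde P_{m-1}$ on $\tilde X_{m-1}$ is the pullback of $P$ and hence is numerically the pullback of $P_{m-1}$ under the blow-up map $\tilde X_{m-1}\to X_{m-1}$, so intersection numbers of $\tilde P_{m-1}$ agree with those of $P_{m-1}$. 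Then by the projection/restriction formula,
$$
P_m^{n-m} = (\tilde P_{m-1}|_{X_m})^{n-m} = \tilde P_{m-1}^{\,n-m}\cdot X_m = \tilde P_{m-1}^{\,n-m}\cdot (f_{m-1}^{*}H_{m-1}),
$$
and since $\dim \tilde X_{m-1} = n-m+1$ this is exactly $\tilde P_{m-1}^{\,n-(m-1)}$ computed after replacing one copy of $\tilde P_{m-1}$ by the pullback of $H_{m-1}$. Unwinding the pullbacks of $H$ down the tower and using that each $\tilde P_i$ restricts compatibly, the factor $f_{m-1}^{*}H_{m-1}$ corresponds on $X$ to one more copy of $f^{*}H$; combined with the inductive hypothesis this yields $P_m^{n-m} = P^{n-m}(f^*H)^m$.

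The step I expect to be the main obstacle is the bookkeeping in the previous paragraph: making fully precise the claim that, at each level, $X_m \subset \tilde X_{m-1}$ is numerically $f_{m-1}^{*}$ of the appropriate pullback of $H$, and that restricting $\tilde P_{m-1}$ to $X_m$ and then tracing back through all the intermediate blow-ups $\tilde X_{m-1}\to X_{m-1}\to\cdots\to X$ faithfully records exactly one additional factor of $f^{*}H$ and no spurious exceptional contributions. Since $P$ is pulled back from $X$, its intersection numbers are insensitive to the blow-ups, and the pencils chosen in $|H|$, $|H_1|,\dots$ are all pulled back from $|H|$ on $Y$; so the key is to phrase everything in terms of pullbacks from $X$ (respectively $Y$) and invoke the projection formula $n-m$ times. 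Apart from this indexing care, the argument is a routine iteration of the projection formula, and Proposition \ref{p>l} guarantees the objects in the statement are well defined.
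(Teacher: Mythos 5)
Your proof is correct and is essentially the same argument as the paper's, just unrolled as an induction on $m$: the paper dispatches this in one line by observing that $Y_m$ is the complete intersection of $m$ general members of $|H|$ (so $X_m = f^*Y_m$, and since $P_m$ is the restriction of the pullback of $P$, which is insensitive to the blow-ups, $P_m^{n-m} = P^{n-m}(f^*H)^m$ is immediate), whereas you carry out the same computation one level of the tower at a time via the projection formula. The blow-up bookkeeping you flag as the main obstacle is indeed the only thing to check, and it works out precisely because the general fibers $X_m$ avoid the exceptional loci of the induced blow-ups and $P$ is pulled back from $X$.
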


\begin{proof}
	This holds simply because according to the construction in Section \ref{iteratedfiltration}, $Y_m$ is isomorphic the complete intersection of $m$ general sections in $|H|$.
\end{proof}

We also have the following easy result.
\begin{prop} \label{ph>0}
	Suppose that $P-f^*H$ is pseudo-effective. Then
	$$
	P^n \ge P^{n-1}(f^*H) \ge \cdots \ge P(f^*H)^{n-1} = H^{n-1} \deg(P|_C) > 0. 
	$$
\end{prop}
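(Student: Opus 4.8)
The plan is to derive the whole chain as iterated applications of the elementary monotonicity of intersection numbers of nef classes that is already invoked in this paper (the footnote to the proof of Proposition~\ref{numericalinequality}): if $A_1,\dots,A_n$ and $B_1,\dots,B_n$ are nef divisors with $B_i-A_i$ pseudo-effective for every $i$, then $B_1\cdots B_n\ge A_1\cdots A_n$. First I would record that $f^*H$ is nef, since $|H|$ is base-point-free and hence $H$ is semi-ample, and that $P$ is nef by hypothesis; consequently every product $P^{\,n-k-1}(f^*H)^k$ is a product of nef divisors.

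Next, for each $0\le k\le n-2$ I would compare the two $n$-fold products
$$
P^{\,n-k}(f^*H)^k=P\cdot\bigl(P^{\,n-k-1}(f^*H)^k\bigr)
\quad\text{and}\quad
P^{\,n-k-1}(f^*H)^{k+1}=(f^*H)\cdot\bigl(P^{\,n-k-1}(f^*H)^k\bigr),
$$
which share the common tail $P^{\,n-k-1}(f^*H)^k$ and differ only in their first factor, where $P-f^*H$ is pseudo-effective by assumption. Applying the monotonicity principle with $B_1=P$, $A_1=f^*H$ and $B_j=A_j$ (the remaining nef factors) gives $P^{\,n-k}(f^*H)^k\ge P^{\,n-k-1}(f^*H)^{k+1}$; concatenating over $k=0,1,\dots,n-2$ yields the displayed decreasing chain from $P^n$ down to $P(f^*H)^{n-1}$.

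It remains to identify $P(f^*H)^{n-1}$ and to see that it is positive. By the projection formula — equivalently, by cutting $Y$ down with $n-1$ general members of $|H|$ as in the construction of Section~\ref{iteratedfiltration} — the class $(f^*H)^{n-1}$ is the $f^*$-pullback of the class of a reduced $0$-cycle of degree $H^{n-1}$ supported on general points of $Y$, hence is represented by $H^{n-1}$ general fibers, each a copy of $C$. Therefore $P(f^*H)^{n-1}=H^{n-1}\,(P\cdot C)=H^{n-1}\deg(P|_C)$, which is strictly positive since $H^{n-1}>0$ by the bigness of $H$ and $\deg(P|_C)>0$ by hypothesis. There is no genuine obstacle here: the statement is purely formal once the nefness of $P$ and $f^*H$ and the pseudo-effectivity of $P-f^*H$ are in place. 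The only point deserving a word of care is that $P$ is merely a nef $\QQ$-divisor, so the intersection-theoretic identities above should be read $\QQ$-linearly in $P$ (clear denominators if one prefers), which changes nothing.
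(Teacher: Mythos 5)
Your proof is correct, and it takes exactly the approach one would expect for this "easy result" which the paper states without proof: the decreasing chain follows from the standard monotonicity of intersection numbers of nef classes (swapping the pseudo-effective difference $P-f^*H$ one factor at a time), and the terminal equality follows by identifying $(f^*H)^{n-1}$ with $H^{n-1}$ copies of the fiber class $[C]$ via the projection formula. Nothing to add.
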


In the rest of this subsection, we always assume that \emph{$P-f^*H$ is pseudo-effective}.

\begin{lemma} \label{error1}
	We have
	$$
	\sum_{i_1} a_{i_1} \le \frac{P^{n-1}L}{P_1^{n-1}} + 1 \le \frac{P^n}{P_1^{n-1}} + 1 \le \frac{2P^n}{P^{n-1}_1}.
	$$
\end{lemma}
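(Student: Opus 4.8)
The plan is to extract the inequality $\sum_{i_1} a_{i_1} \le \frac{P^{n-1}L}{P_1^{n-1}} + 1$ directly from Proposition \ref{numericalinequality} (2), and then to upgrade the right-hand side using the hypothesis $P - f^*H$ pseudo-effective together with Propositions \ref{pmintersection} and \ref{ph>0}. Concretely, on the first tower $\pi_0 : \tilde X_0 \to \tilde Y_0 \to \PP^1$ we have the filtration $L_0 > L_1 > \cdots > L_N \ge 0$ on $\tilde X_0$ with $a_{i_1} = \nti_{f_0}(L_{i_1})$ and general fiber $X_1 = f^*H$. Applying Proposition \ref{numericalinequality} (2) with the nef divisor $\tilde P_0$ in place of $P$ there and $L_0 = \sigma^*L$ in place of $L$, I would get
$$
\tilde P_0 L_0^{n-1} \ge (n-1)\sum_{i_1=0}^{N} a_{i_1} \tilde P_0 L_{i_1}^{n-2} X_1 - (n-1)\tilde P_0 L_0^{n-2} X_1.
$$
Now the key point: since $P \ge L$, Proposition \ref{p>l} gives $P_1 = \tilde P_0|_{X_1} \ge L_{i_1}|_{X_1}$ for every $i_1$, so $\tilde P_0 L_{i_1}^{n-2} X_1 = (P_1)(L_{i_1}|_{X_1})^{n-2} \ge P_1^{n-1}$ (using that $P_1$ and each $L_{i_1}|_{X_1}$ are nef on $X_1$ and the stated monotonicity of intersection numbers of nef divisors). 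Substituting and dividing by $(n-1)P_1^{n-1}>0$ — positivity coming from Proposition \ref{ph>0}, which forces $P_1^{n-1} = P(f^*H)^{n-1} \ge H^{n-1}\deg(P|_C) > 0$ via Proposition \ref{pmintersection} — yields
$$
\sum_{i_1} a_{i_1} \le \frac{\tilde P_0 L_0^{n-1}}{(n-1)P_1^{n-1}} + \frac{\tilde P_0 L_0^{n-2} X_1}{P_1^{n-1}}.
$$
Here $\tilde P_0 L_0^{n-1} = P L^{n-1}$ wait — one must be slightly careful: $L_0 = \sigma^* L$ and $\tilde P_0 = \sigma^* P$ on the blow-up $X_L$, and pullback preserves top intersection numbers, so $\tilde P_0 L_0^{n-1} = P L^{n-1}$; similarly $\tilde P_0 L_0^{n-2} X_1 = P L^{n-2} (f^*H) = \cdots$, and in fact $\tilde P_0 L_0^{n-2} X_1 \le P_1^{n-1}$ because $P \ge L$ again on the fiber. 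That collapses the second term to at most $1$, giving the first claimed inequality $\sum_{i_1} a_{i_1} \le \frac{P^{n-1}L}{P_1^{n-1}} + 1$.

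For the remaining two inequalities I would argue purely formally. Since $P \ge L$ (i.e. $P - L$ is pseudo-effective) and all of $P, L$ are nef, the monotonicity of intersection numbers gives $P^{n-1}L \le P^n$, hence $\frac{P^{n-1}L}{P_1^{n-1}} + 1 \le \frac{P^n}{P_1^{n-1}} + 1$. Finally, $P_1^{n-1} = P(f^*H)^{n-1} \le P^n$ by Proposition \ref{ph>0} (the chain $P^n \ge P^{n-1}(f^*H) \ge \cdots \ge P(f^*H)^{n-1}$), so $1 \le \frac{P^n}{P_1^{n-1}}$, and therefore $\frac{P^n}{P_1^{n-1}} + 1 \le \frac{2P^n}{P_1^{n-1}}$. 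This closes the chain of four terms.

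I do not expect any serious obstacle here; the lemma is essentially bookkeeping on top of Proposition \ref{numericalinequality} (2). The one place demanding care is the identification of intersection numbers across the various blow-ups $\tilde X_0 \to X$ and the restriction $X_1 \cong f^*H$ — making sure that $\tilde P_0 L_0^{n-1} = P L^{n-1}$ and that $\tilde P_0 L_0^{n-2} X_1$ really equals the intersection number computed after restricting to a general fiber, so that Propositions \ref{p>l} and \ref{pmintersection} apply verbatim. Once those identifications are pinned down, each step is an application of the nef-monotonicity inequality quoted in the footnote to Proposition \ref{numericalinequality}, plus the positivity $P_1^{n-1}>0$ from Proposition \ref{ph>0}.
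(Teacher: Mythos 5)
Your chain's last two inequalities are fine and match the paper's argument exactly: $P^{n-1}L \le P^n$ from $L \le P$, and $P_1^{n-1} = P(f^*H)^{n-1} \le P^n$ from Proposition \ref{ph>0}. The problem is in the first inequality, where there is a genuine sign error that cannot be patched.

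You claim $\tilde P_0 L_{i_1}^{n-2} X_1 = P_1 (L_{i_1}|_{X_1})^{n-2} \ge P_1^{n-1}$, citing monotonicity of intersections of nef divisors. But Proposition \ref{p>l} gives $P_1 \ge L_{i_1}|_{X_1}$, so monotonicity runs the \emph{other} way: $P_1 (L_{i_1}|_{X_1})^{n-2} \le P_1^{n-1}$. (You in fact invoke the correct direction two sentences later, when you write $\tilde P_0 L_0^{n-2} X_1 \le P_1^{n-1}$ because $P \ge L$; the term with $i_1 = 0$ is exactly that term, so the two claims are internally contradictory.) With the sign corrected, Proposition \ref{numericalinequality}(2) gives
$$
PL^{n-1} \ge (n-1)\sum_{i_1} a_{i_1}\, P_1 (L_{i_1}|_{X_1})^{n-2} - (n-1)P_1^{n-1},
$$
and now you would need a \emph{uniform positive lower bound} on $P_1 (L_{i_1}|_{X_1})^{n-2}$ by a multiple of $P_1^{n-1}$ to extract $\sum a_{i_1}$, but no such bound exists: for the last terms of the filtration $L_{i_1}|_{X_1}$ may have degree $0$ on the curve fiber, making $P_1 (L_{i_1}|_{X_1})^{n-2}$ arbitrarily small or zero. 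So Proposition \ref{numericalinequality}(2) is simply the wrong tool here.

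The paper's proof is more elementary and bypasses Proposition \ref{numericalinequality} entirely. From the defining relations $L_i = a_i X_1 + Z_{i+1} + L_{i+1}$ for $0 \le i < N$, together with the fact that $L'_N = L_N - (a_N - 1)X_1$ is nef, one unrolls the filtration to get
$$
L_0 \ge \Bigl(\sum_{i_1} a_{i_1} - 1\Bigr) X_1 + L'_N.
$$
Intersecting with $\tilde P_0^{n-1}$, and using $\tilde P_0^{n-1}X_1 = P_1^{n-1}$ (Proposition \ref{p>l}) and $\tilde P_0^{n-1}L'_N \ge 0$, yields $P^{n-1}L = \tilde P_0^{n-1}L_0 \ge \bigl(\sum_{i_1} a_{i_1} - 1\bigr)P_1^{n-1}$ directly, which is the first inequality. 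You should adopt this direct expansion of $L_0$ rather than routing through the numerical inequality.
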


\begin{proof}
	The filtration
	$$
	L_0 > L_1 > \cdots > L_N \ge 0
	$$
	yields
	$$
	L_0 \ge \left(\sum_{i_1} a_{i_1} - 1\right) X_1 + L'_N.
	$$ 
	Notice that $\tilde{P}_0 \ge L_0$ and $L'_N$ is nef. Thus by Proposition \ref{p>l},
	$$
	P^n \ge P^{n-1} L = \tilde{P}_0^{n-1}L_0 \ge \left(\sum_{i_1} a_{i_1} - 1\right) P_1^{n-1}.
	$$
	Hence the first and the second inequality hold. The third one holds because $P_1^{n-1} = P^{n-1}(f^*H) \le P^n$ by Proposition \ref{ph>0}.
\end{proof}

\begin{defi} \label{bmp}
	For any $1 \le m \le n-1$, we define
	$$
	B_m(X, H, P) := 2^{n-2} \frac{P^n}{\deg(P|_C)} \frac{(\tilde{P}_{m-1}|_{X_m})^{n-m}}{P^{n-m+1}_{m-1}}. 
	$$
\end{defi}

From Proposition \ref{p>l} and \ref{pmintersection}, it is easy to see that
$$
B_m(X, H, P) = 2^{n-2} \frac{P^n}{\deg(P|_C)} \frac{P_m^{n-m}}{P^{n-m+1}_{m-1}} = 2^{n-2} \frac{P^n}{\deg(P|_C)} \frac{P^{n-m}(f^*H)^m}{P^{n-m+1}(f^*H)^{m-1}}.
$$
This invariant is crucial to us in order to give the explicit error term appearing in the general relative Clifford inequality such as Theorem \ref{explicitclifford}.

Before moving to the next section, we would like to add a few remarks here. First, it is easy to check from the definition that
$$
B_1(X, H, P) = 1
$$
when $n=2$. The reason is that in this case, $X$ is a fibered surface and $X_1$ is a general fiber which is exactly $C$ (see Section \ref{laststep}). 

Second, for any $1 \le m \le n-2$, we can similarly define
$$
B_m(X_1, H_1, P_1) := 2^{n-3} \frac{P_1^{n-1}}{\deg(P_1|_C)} \frac{(\tilde{P}_m|_{X_{m+1}})^{n-m-1}}{P^{n-m}_m}
$$
for the restricted triple $(X_1, H_1, P_1)$.
This notion will be used when we prove the main result using induction. The following proposition is need in the next section.
\begin{prop} \label{bmxhp}
	For any $1 \le m \le n-2$, we have
	$$
	B_{m}(X_1, H_1, P_1) \sum_{i_1} a_{i_1} \le  \frac{2P^n}{P_1^{n-1}} B_{m}(X_1, H_1, P_1) = B_{m+1}(X, H, P).
	$$
\end{prop}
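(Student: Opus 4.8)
The plan is to treat the two claims separately: the rightmost equality is a formal consequence of Definition \ref{bmp}, and the left-hand inequality then follows at once from Lemma \ref{error1}.

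\emph{The equality.} I would first unwind Definition \ref{bmp} for the restricted triple $(X_1, H_1, P_1)$. Here $X_1$ has dimension $n-1$, so the definition is applied with $n-1$ in place of $n$; moreover, in the tree-like construction of Section \ref{iteratedfiltration} applied to $X_1$, the $m$-th variety is $X_{m+1}$, and the pullback of $P_1$ to it is $\tilde P_m|_{X_{m+1}}$. Thus
\[
B_m(X_1, H_1, P_1) = 2^{\,(n-1)-2}\,\frac{P_1^{\,n-1}}{\deg(P_1|_C)}\,\frac{(\tilde P_m|_{X_{m+1}})^{\,n-m-1}}{P_m^{\,n-m}}.
\]
Multiplying by $2P^n/P_1^{n-1}$ cancels the factor $P_1^{n-1}$ and yields $2^{n-2}\,\frac{P^n}{\deg(P_1|_C)}\,\frac{(\tilde P_m|_{X_{m+1}})^{n-m-1}}{P_m^{n-m}}$. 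On the other hand, Definition \ref{bmp} with index $m+1$ gives $B_{m+1}(X,H,P) = 2^{n-2}\,\frac{P^n}{\deg(P|_C)}\,\frac{(\tilde P_{m}|_{X_{m+1}})^{n-m-1}}{P_{m}^{n-m}}$. Comparing the two, the equality reduces to $\deg(P_1|_C) = \deg(P|_C)$, which holds because $C$ is, by construction, a general fiber of both $f$ and $f_0|_{X_1}$ and $P_1$ is the pullback of $P$ along $X_1 \to X$ (equivalently, this is the $m=1$ instance of the identity underlying Propositions \ref{p>l} and \ref{pmintersection}).

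\emph{The inequality.} Every intersection number in the numerator of $B_m(X_1,H_1,P_1)$ is a product of nef classes, and $\deg(P_1|_C) = \deg(P|_C) > 0$, while $P_m^{n-m} = P^{n-m}(f^*H)^m > 0$ and $P_1^{n-1} = P^{n-1}(f^*H) > 0$ by Propositions \ref{pmintersection} and \ref{ph>0} (using the standing assumption that $P - f^*H$ is pseudo-effective); hence $B_m(X_1,H_1,P_1) \ge 0$. Multiplying the bound $\sum_{i_1} a_{i_1} \le 2P^n/P_1^{n-1}$ from Lemma \ref{error1} by this nonnegative quantity gives the left-hand inequality, and combining it with the equality already proved completes the argument.

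The argument is essentially bookkeeping, so there is no genuine obstacle; the only points that need care are keeping the indexing conventions straight — recognizing that the $m$-th variety in the tree over $X_1$ is $X_{m+1}$, and that Definition \ref{bmp} applied to the $(n-1)$-dimensional $X_1$ carries the constant $2^{n-3}$ rather than $2^{n-2}$ — together with the observation that the relevant intersection numbers against $f^*H$ are strictly positive, so that each $B_m$ is genuinely nonnegative and the inequality is not reversed.
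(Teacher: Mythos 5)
Your proof is correct and takes the same route as the paper: the equality is obtained by unwinding Definition \ref{bmp} for the $(n-1)$-dimensional triple $(X_1,H_1,P_1)$ (with the shifted indexing and the constant $2^{n-3}$) and using $\deg(P_1|_C)=\deg(P|_C)$, while the inequality follows from Lemma \ref{error1} after noting $B_m(X_1,H_1,P_1)\ge 0$. The paper states this in two sentences; you have just spelled out the bookkeeping, including the nonnegativity check that the paper leaves implicit.
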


\begin{proof}
	The first inequality is due to Lemma \ref{error1}. The second identity simply follows from the definition and that $\deg(P|_C) = \deg(P_1|_C)$.
\end{proof}

\section{A Clifford type inequality with error terms} \label{Clifforderror}
In this section, we assume that $f: X \to Y$, $C$ and $H$ are the same as in Section \ref{setting}.
Let $\hat{P} \ge 0$ be a nef $\QQ$-divisor on $X$ such that
\begin{itemize}
	\item $\hat{P}-f^*H$ is pseudo-effective;
	\item $\hat{P}$ is numerically $f$-special with $d = \deg(\hat{P}|_C) > 0$.
\end{itemize}
Let $P \le \hat{P}$ be any nef $\QQ$-divisor on $X$ with $P|_C = \hat{P}|_C$. Let $L \le P$ be a nef divisor on $X$ such that $|L|$ is base point free. Therefore, $L$ is also numerically $f$-special. All these assumptions will be used throughout this section.

The following theorem is the main result in this section.
\begin{theorem} \label{mainwitherror}
	Let the notation be as above and fix a divisor $H$ as in Section \ref{setting}. Then we have
	$$
	h^0(X, L) \le \left(\frac{1}{2n!} + \frac{n-\varepsilon}{n!d}\right) PL^{n-1} + \frac{d+2}{2} \sum_{m=1}^{n-1} B_m(X, H, \hat{P}).
	$$
	Here $\varepsilon = \frac{1}{2}$ when $d = 1$ or when $C$ is hyperelliptic, and $L$ is $f$-special with $\deg(L|_C) = d-1$. Otherwise, $\varepsilon = 1$.
\end{theorem}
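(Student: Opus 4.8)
The plan is to prove Theorem \ref{mainwitherror} by induction on $n=\dim X$, unwinding the tree-like filtration of Section \ref{iteratedfiltration} one level at a time and reducing, at the bottom, to the classical Clifford inequality \eqref{classicalClifford} on the general fiber $C$. For the inductive step I would first perform the ``first step'' construction of Section \ref{firststep}: after a birational modification, $X$ acquires a fibration $\pi_0$ over $\PP^1$ with general fiber $X_1\cong f^*H$, together with a filtration $L_0>L_1>\cdots>L_N\ge 0$ of nef divisors and nef thresholds $a_{i_1}=\nti_{\pi_0}(L_{i_1})$, and Proposition \ref{h0} gives $h^0(X,L)\le\sum_{i_1}a_{i_1}h^0(X_1,L_{i_1}|_{X_1})$. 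Each $L_{i_1}|_{X_1}$ is base point free (for $i_1\ge 1$ this is built into Theorem \ref{filtration}, for $i_1=0$ it is the hypothesis on $L$), and the triple $(X_1,H_1,\hat P|_{X_1})$ inherits all the hypotheses in dimension $n-1$: $\hat P|_{X_1}-f^*H_1$ is pseudo-effective because $X_1$ is a general member of $|f^*H|$, $\hat P|_{X_1}$ is numerically special with $\deg(\hat P|_{X_1}|_C)=d$ since $C$ is unchanged, and $L_{i_1}|_{X_1}\le P|_{X_1}\le\hat P|_{X_1}$; so the induction hypothesis applies to each summand. On the error side, summing the hypothesis against the weights $a_{i_1}$ multiplies $\tfrac{d+2}{2}\sum_{m=1}^{n-2}B_m(X_1,H_1,\hat P|_{X_1})$ by $\sum_{i_1}a_{i_1}$, which by Proposition \ref{bmxhp} is bounded by $\tfrac{d+2}{2}\sum_{m=2}^{n-1}B_m(X,H,\hat P)$ — this accounts for all but the $m=1$ term of the target error, which must later be supplied by the main-term reassembly (in the base case $n=2$ one has $B_1(X,H,\hat P)=1$, exactly absorbing the accumulated constant).

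For the main term I would \emph{not} feed the induction hypothesis through Proposition \ref{numericalinequality}(2) as a black box, since that loses a factor $\tfrac{n}{n-1}$ in the leading coefficient. Instead I would unfold the recursion completely, so that $h^0(X,L)\le\sum_{\vec\imath}\bigl(\prod_j a_{i_1\cdots i_j}\bigr)h^0(C,L_{\vec\imath}|_C)$ with the weighted sum running over the leaves of the whole tree, and then apply Clifford on $C$. Because $L$ is numerically $f$-special, $\deg(L_{\vec\imath}|_C)\le 2g-2$ for every leaf, so $h^0(C,L_{\vec\imath}|_C)\le\tfrac12\deg(L_{\vec\imath}|_C)+1$ unconditionally, and the bound splits as $\tfrac12\Sigma_{\deg}+\Sigma_1$ with $\Sigma_{\deg}=\sum_{\vec\imath}(\prod a)\deg(L_{\vec\imath}|_C)$ and $\Sigma_1=\sum_{\vec\imath}\prod a$ the key number \eqref{keynumber}. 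Iterating Proposition \ref{numericalinequality}(1) down the $n-1$ levels, with Propositions \ref{numericalinequality'} and \ref{ln-1z} controlling the horizontal base loci at each level, produces $\Sigma_{\deg}\le\tfrac1{n!}L^n+(\text{errors})$, while iterating Proposition \ref{numericalinequality}(2) with auxiliary divisor $P$ — using Lemma \ref{error1} and its level-by-level analogues, together with Proposition \ref{bmxhp}, to absorb the ``$+1$'' defects into $B_m(X,H,\hat P)$ — gives $\Sigma_1\le\tfrac{n}{n!d}PL^{n-1}+(\text{errors})$. Combining, using $L^n\le PL^{n-1}$ and $\tfrac{n}{2n!d}\le\tfrac{n-1}{n!d}$ for $n\ge 2$, yields the theorem with $\varepsilon=1$, and the leftover $P(L_0|_{X_1})^{n-2}$-type corrections from the reassembly fit into the missing $\tfrac{d+2}{2}B_1(X,H,\hat P)$.

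The remaining point is the improvement to $\varepsilon=\tfrac12$ when $d=1$, or when $C$ is hyperelliptic, $L$ is $f$-special and $\deg(L|_C)=d-1$. Here I would invoke the equality analysis of Clifford's theorem: $h^0(C,D)=\tfrac12\deg D+1$ forces $D\in\{\CO_C,K_C\}$ or $D$ a multiple of $g^1_2$ (hyperelliptic case), and otherwise $h^0(C,D)\le\tfrac12(\deg D+1)$; combining this with the parity constraint that $g^1_2$-multiples have even degree, and with the hypothesis $\deg(L|_C)=d-1$ which bounds every leaf degree below $d$, one recovers an extra $\tfrac12\Sigma_1$-worth of budget, i.e. $\tfrac1{2n!d}PL^{n-1}$, converting $n$ into $n-\tfrac12$. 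The base case $n=2$ is the same argument with the tree collapsed to one level ($X$ a fibered surface over the curve $Y$, $X_1=C$), using Proposition \ref{h0}, Clifford with the same case analysis, and Proposition \ref{numericalinequality} to turn $\sum a_i\deg(L_i|_C)$ and $\sum a_i$ into $L^2$ and $\tfrac1dPL$. I expect the principal obstacle to be exactly this constant bookkeeping: showing the coefficient of $PL^{n-1}$ comes out as $\tfrac1{2n!}+\tfrac{n-\varepsilon}{n!d}$ and not something larger demands the sharp horizontal-base-locus estimates of Section \ref{moreinequality} applied at \emph{every} level of the tree, a careful parity/special-case treatment in the hyperelliptic and odd-degree situations, and a verification that all the accumulated ``$+1$'' error contributions telescope cleanly into $\tfrac{d+2}{2}\sum_{m=1}^{n-1}B_m(X,H,\hat P)$.
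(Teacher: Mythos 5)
The central gap is in the way you assemble the main term. Your unfolded recursion gives
$$h^0(X,L)\;\le\;\tfrac12\Sigma_{\deg}+\Sigma_1$$
with $\Sigma_1$ appearing with coefficient exactly $1$ (it is the ``$+1$'' in Clifford applied to every leaf). Feeding in $\Sigma_{\deg}\lesssim\tfrac{1}{n!}L^n$ and $\Sigma_1\lesssim\tfrac{n}{n!d}PL^{n-1}$ (Proposition \ref{pln-11}) produces a coefficient
$$\frac{1}{2n!}+\frac{n}{n!\,d},$$
which is the $\varepsilon=0$ estimate, strictly worse than the theorem's $\varepsilon=1$ target $\tfrac{1}{2n!}+\tfrac{n-1}{n!d}$. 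Your claimed rescue ``$\tfrac{n}{2n!d}\le\tfrac{n-1}{n!d}$'' is a non sequitur: the quantity $\tfrac{n}{2n!d}$ never appears, since $\Sigma_1$ carries coefficient $1$, not $\tfrac12$. Already at $n=2$ your recipe gives $\tfrac{L^2}{4}+\tfrac{PL}{d}$ against the required $\tfrac{PL}{4}+\tfrac{PL}{2d}$, a factor-two defect in the second term that does not vanish as you iterate.

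The paper closes exactly this gap by proving the sharper padded statement (Propositions \ref{h0llnhyper}, \ref{h0llnhyper1}, \ref{h0llnnonhyper1}, \ref{h0llnnonhyper2}): for a horizontal $\QQ$-divisor $D\ge 0$ with $D+L\le P$ and $\deg(D|_C)>0$,
$$h^0(X,L)-\frac{L^n}{2n!}-\frac{L^{n-1}D}{2(n-1)!\,n_0}\;\le\;\frac{n_0-\varepsilon}{n_0}\,\Sigma_1+\text{errors},$$
i.e.\ the coefficient of $\Sigma_1$ is already depressed from $1$ to $\tfrac{n_0-\varepsilon}{n_0}$ \emph{at the level of the $h^0$-vs-$L^n$ comparison}, not a posteriori. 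The extra $L^{n-1}D$-term on the left is what pads $L^n$ up to $PL^{n-1}$ (take $D=(P-L)^H$ when $\deg(L|_C)<d$), and the positivity of $\deg(D|_C)$ is precisely what shaves off $\tfrac{\varepsilon}{n_0}$. This padding must be threaded through every level of the induction: it comes either from the surplus $P-L$ or, one level down, from the horizontal parts $Z^H_{\lambda_j}$ of the base loci (Proposition \ref{ln-1z}). You do invoke Propositions \ref{numericalinequality'} and \ref{ln-1z}, but only to control $\Sigma_{\deg}$ and the error terms; the crucial role they play in the paper is to supply the divisor $D$ that reduces the $\Sigma_1$-coefficient, and your proposal omits this. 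A secondary issue: when $\deg(L|_C)=d$ there is no surplus $P-L$ to pad with, and the paper runs a separate direct induction (Case 2 of its proof of Theorem \ref{mainwitherror}); your outline does not address this case. Finally, your proposed route to $\varepsilon=\tfrac12$ via the Clifford equality classification is plausible in spirit, but moot until the $\varepsilon=1$ step is repaired.
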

The number $B_m(X, H, \hat{P})$ $(m=1, \cdots, n-1)$ here is the same as before. The whole section is devoted to the proof of this theorem. From now on, $H$ is fixed. Associated to $H$ is a set of nef thresholds $a_{i_1 \cdots i_m}$ as in Section \ref{tree}, which we will keep using throughout this section.

\begin{remark} \label{beingspecial}
	Notice that if $C$ is hyperelliptic and $L$ is $f$-special, then $L|_C$ is linearly equivalent to $l g_2^1$ for some integer $0 \le l \le g-1$. Furthermore, if $d=\deg(P|_C) = \deg(L|_C) + 1$, then $P|_C$ is also special, i.e., $P$ is $f$-special.
\end{remark}

\subsection{Comparison between $h^0(X, L)$ and $L^n$}
In this subsection, we will compare $h^0(X, L)$ and $L^n$ together with other terms. 

\subsubsection{Hyperelliptic case} We first consider the case when $C$ is hyperelliptic.

\begin{prop} \label{h0llnhyper}
	Suppose that $C$ is hyperelliptic and $L$ is $f$-special. Let $D \ge 0$ be a horizontal $\QQ$-divisor on $X$ with respect to $f$ such that $\deg(D|_C) > 0$ and $D + L \le P$. Then for any integer $n_0 \ge n$, we have
	\begin{eqnarray*}
			h^0(X, L) - \frac{L^n}{2n!} - \frac{L^{n-1}D}{2(n-1)!n_0} & \le &  \frac{n_0 - \varepsilon}{n_0} \sum_{i_1, \cdots, i_{n-1}} a_{i_1} \cdots a_{i_1 \cdots i_{n-1}} \\ 
			& & + \frac{d}{2} \sum_{m=1}^{n-1} B_m(X, H, \hat{P}).
	\end{eqnarray*}
	Here $\varepsilon = \frac{1}{2}$ when $\deg(D|_C) = 1$, and $\varepsilon = 1$ otherwise.
\end{prop}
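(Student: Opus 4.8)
The plan is to reduce the statement to a suitable application of the classical Clifford inequality fiber by fiber, combined with the tree-like filtration machinery from Section~\ref{iteratedfiltration}. First I would set up the iterated filtration for $L$ with respect to the fixed $H$, producing the tree of nef divisors $L_{i_1\cdots i_m}$ together with the nef thresholds $a_{i_1\cdots i_m}$. Iterating Proposition~\ref{h0} down the tree (applying it once at each level $\pi_0,\pi_1,\dots,\pi_{n-2}$, and noting that at the bottom the fiber $C = X_{n-1}$ is the hyperelliptic curve), one gets an upper bound
\begin{equation*}
h^0(X, L) \le \sum_{i_1,\dots,i_{n-1}} a_{i_1} a_{i_1 i_2} \cdots a_{i_1\cdots i_{n-1}} \, h^0\!\bigl(C, L_{i_1\cdots i_{n-1}}|_C\bigr).
\end{equation*}
Here the key point, using Remark~\ref{beingspecial} together with the hypothesis that $L$ is $f$-special and $C$ hyperelliptic, is that each restriction $L_{i_1\cdots i_{n-1}}|_C$ is $f$-special of the form $l\, g_2^1$ with $0 \le l \le g-1$, so by the classical Clifford inequality (in its sharp hyperelliptic form) $h^0(C, L_{i_1\cdots i_{n-1}}|_C) \le \tfrac{1}{2}\deg(L_{i_1\cdots i_{n-1}}|_C) + 1$; the $+1$ term is then absorbed by the counting and the leading term by the product of thresholds.

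Next I would convert the combinatorial quantity $\sum a_{i_1}\cdots a_{i_1\cdots i_{n-1}}$ into intersection numbers. This is where the numerical inequalities of Section~\ref{moreinequality} enter: Proposition~\ref{numericalinequality}, Proposition~\ref{numericalinequality'}, and Proposition~\ref{ln-1z} express $L^n$ (and mixed products $L^{n-1}D$) as lower bounds involving the $a$'s and the restricted divisors $L_i|_{X_1}$, allowing an inductive descent. I expect to run the induction on $n$: apply the statement (or rather the analogous estimate) to the restricted triple $(X_1, H_1, P_1)$ on the general fiber $X_1 = f^*H$, which is again a variety fibered by curves over a base of one lower dimension, and then lift back up via the top-level filtration $L_0 > L_1 > \cdots > L_N$. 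The error terms propagate through Proposition~\ref{bmxhp}, which exactly says $B_m(X_1, H_1, P_1)\sum_{i_1} a_{i_1} = B_{m+1}(X, H, \hat P)$, so the sum $\sum_{m=1}^{n-1} B_m$ reassembles correctly after the inductive step; the extra factor $\tfrac{d}{2}$ (as opposed to $\tfrac{d+2}{2}$ in Theorem~\ref{mainwitherror}) reflects that we are in the intermediate estimate before the $+1$'s from Clifford are folded in.

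The main obstacle will be the careful bookkeeping of the horizontal base loci $Z_{\lambda_j}$ and the auxiliary divisor $D$. Concretely: when descending the filtration $L_0 > L_1 > \cdots$, the divisors $L_i|_{X_1}$ need not be base point free, so one cannot directly re-apply the filtration construction to them; the fix is to work with the nef divisors $L'_i = L_i - (a_i-1)F$ and use the identity \eqref{l'i} together with Proposition~\ref{ln-1z} to show that the horizontal parts $Z^H_{\lambda_j}$ contribute a genuine (nonnegative) surplus to $L^n$, which is what produces the sharp constant $\tfrac{n-\varepsilon}{n!d}$ rather than a weaker $\tfrac{n}{n!d}$. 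Tracking the role of $D$ (with $\deg(D|_C)=1$ forcing $\varepsilon = \tfrac12$) through each level of the tree, and verifying that the threshold products telescope against the intersection-number lower bounds with the right combinatorial coefficients $\tfrac{1}{n!}$ and $\tfrac{1}{(n-1)!\,n_0}$, is the delicate part; the choice of the free parameter $n_0 \ge n$ is precisely what gives room to balance the $L^{n-1}D$ term against the threshold sum. Everything else — the vanishing $h^0(X_L, L_N - a_N F_N) = 0$, nefness of the $L'_i$, and the estimates $P^n \ge P^{n-1}(f^*H) \ge \cdots$ from Proposition~\ref{ph>0} — is routine input.
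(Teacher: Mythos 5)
Your general direction --- induction on $n$ through the tree-like filtration, with the horizontal base loci $Z^H_{\lambda_j}$ supplying the ``$D$'' at lower levels --- is the same as the paper's. But there are concrete gaps.

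First, the opening plan of iterating Proposition~\ref{h0} all the way to $C$ and then applying the classical Clifford inequality does not produce the coefficient $\frac{n_0-\varepsilon}{n_0}$ in front of the threshold sum: a naive Clifford estimate on each leaf gives $\sum a_{i_1}\cdots a_{i_1\cdots i_{n-1}}$ with coefficient $1$, and the gain from $1$ down to $\frac{n_0-\varepsilon}{n_0}$ is precisely what the $L^{n-1}D$ term is supposed to buy --- but your plan never explains where that saving comes from. The paper never uses the ``fully unrolled'' bound; it works one level at a time with the quantity $h^0 - L^n/(2n!)$ and feeds the result into the next level.

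Second, the base case $n=2$ is not a direct application of Clifford: the paper invokes a Xiao-type estimate for hyperelliptic surface fibrations (Theorem~3.4 of \cite{Zhang_Slope}), namely $h^0(X,L) - \tfrac{L^2}{4} \le a_0 + \tfrac12 \sum_{i_1>0} a_{i_1} + \tfrac{LC}{2}$, whose factor $\tfrac12$ on the sum $\sum_{i_1>0}a_{i_1}$ is strictly better than what fiberwise Clifford alone gives, and is essential for the recursion to close. Your proposal treats the base as ``classical Clifford with the $+1$ absorbed by counting,'' which is too weak.

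Third, the crux of the inductive step --- which your last paragraph gestures at but does not pin down --- is the case split at $\lambda_1$: for $i_1 \ge \lambda_1$ the inductive hypothesis is invoked on $X_1$ with $Z^H_{\lambda_j}$ as the auxiliary horizontal divisor and with $n_0 = n$ (here the hyperelliptic parity forces $\deg(Z^H_{\lambda_j}|_C)\ge 2$, hence $\varepsilon=1$, giving $\tfrac{n-1}{n}$); for $i_1 < \lambda_1$ the hypothesis is invoked with the pullback $\tilde D_0$ and the \emph{same} $n_0$, giving $\tfrac{n_0-\varepsilon}{n_0}$. These two are then combined using $\tfrac{n-1}{n} \le \tfrac{n_0-\varepsilon}{n_0}$. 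Without this explicit split and the parity observation on $Z^H_{\lambda_j}$ the constant $\varepsilon$ cannot be recovered. Proposition~\ref{ln-1z} and \eqref{rough}--\eqref{ln-1d} are exactly the pieces that set this up; your plan lists them as ingredients but does not assemble them.
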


\begin{proof}
	The proof is by induction. We first consider the case when $n=2$, and thus $n_0 \ge 2$. By \cite[Theorem 3.4 (1), (3)]{Zhang_Slope}, we have
	$$
	h^0(X, L) - \frac{L^2}{4} \le a_0 + \frac{1}{2} \sum_{i_1 > 0} a_{i_1} + \frac{LC}{2}.
	$$
	Using the fact that $L = L_0 = L'_0 + (a_0-1)C$ with $L'_0$ nef, we deduce that
	\begin{eqnarray*}
		h^0(X, L) - \frac{L^2}{4} - \frac{LD}{2n_0} & \le & a_0 + \frac{1}{2} \sum_{i_1 > 0} a_{i_1} + \frac{LC}{2} - \frac{L'_0D + (a_0-1)DC}{2n_0} \\
		& \le & \left(1 - \frac{DC}{2n_0}\right) a_0 + \frac{1}{2} \sum_{i_1 > 0} a_{i_1} + \frac{LC}{2} + \frac{DC}{4} \\
		& \le & \left(\frac{n_0 - \varepsilon}{n_0} \right) a_0 + \frac{1}{2} \sum_{i_1 > 0} a_{i_1} + \frac{PC}{2} \\
		& \le & \frac{n_0 - \varepsilon}{n_0} \sum_{i_1} a_{i_1} + \frac{d}{2} B_1(X, H, \hat{P}),
	\end{eqnarray*}
	where $B_1(X, H, \hat{P}) = 1$. Thus the proof for $n=2$ is completed.
	
	In the following, we assume that Proposition \ref{h0llnhyper} holds up to dimension $n-1$. Suppose now that $\dim X = n$. Then $n_0 \ge n$. As is described in Section \ref{iteratedfiltration}, by an appropriate blowing up of $X$, we obtain a tower of fibrations
	$$
	\xymatrix{
		\tilde{X}_0 \ar[r]^{f_0} \ar@/^1.5pc/[rr]^{\pi_0} & \tilde{Y}_0 \ar[r] & \PP^1
	}
	$$
	with $X_1$ a general fiber of $\pi_0$. Let
	$$
	L_0 > L_1 > \cdots > L_N \ge 0
	$$
	be the filtration therein. Let $Z_1, \cdots, Z_N$ be the corresponding base loci. Since $C$ is hyperelliptic and the linear system $|L_{i_1}|_C|$ are special and base point free for any $1 \le i_1 \le N$, we deduce that $\deg(Z_{i_1}|_C)$ must be even for any $i_1$. Here we resume the notation in Section \ref{horizontalbaseloci}. Let $1 \le \lambda_1 < \cdots < \lambda_q \le N$ denote all indices such that $\deg(Z_{\lambda_j}|_C) > 0$ and write $Z_{\lambda_j}^H$ as the horizontal part of $Z_{\lambda_j}$ with respect to $f_0$. Then $\deg(Z_{\lambda_j}^H|_C) \ge 2$. 
	
	By Proposition \ref{h0}, Proposition \ref{numericalinequality} (1) and Proposition \ref{ln-1z}, we have
	\begin{eqnarray}
		h^0(X, L) - \frac{L^n}{2n!} & \le & \sum_{i_1} a_{i_1} \left(h^0(X_1, L_{i_1}|_{X_1}) - \frac{(L_{i_1}|_{X_1})^{n-1}}{2(n-1)!} \right) + \frac{L_0^{n-1}X_1}{2(n-1)!}  \nonumber \\
		& & - \frac{1}{2n!} \sum_{i_1 > 0} \left((n-1)a_{i_1}L^{n-2}_{i_1} X_1 +  (L'_{i_1})^{n-1} \right) Z_{i_1} \nonumber \\
		& \le & \sum_{i_1} a_{i_1} \left(h^0(X_1, L_{i_1}|_{X_1}) - \frac{(L_{i_1}|_{X_1})^{n-1}}{2(n-1)!} \right) + \frac{(L_0|_{X_1})^{n-1}}{2(n-1)!} \label{rough} \\
		& & - \sum_{j=1}^{q} \sum_{i_1 = \lambda_j}^{\lambda_{j+1} - 1} \frac{a_{i_1}(L_i|_{X_1})^{n-2}(Z^H_{\lambda_j}|_{X_1})}{2(n-2)!n}. \nonumber
	\end{eqnarray}
	Let $P_1$ and $\hat{P}_1$ denote the pullback of $P$ and $\hat{P}$ via the morphism $X_1 \to X$. Let $H_1$ denote the pull back of $H$ via the morphism $Y_1 \to Y$. Then it is easy to check that 
	\begin{itemize}
		\item $\hat{P}_1 - f_0^* H_1$ is pseudo-effective on $X_1$;
		\item $\hat{P}_1|_C = \hat{P}|_C$ so that $\hat{P}_1$ is a numerically $f$-special divisor on $X_1$ with $\deg(\hat{P}_1|_C) = d > 0$.
	\end{itemize}
	Moreover, for each $i_1$, we have $P_1 \le \hat{P}_1$, $P_1|_C =  \hat{P}_1|_C$, and $L_{i_1} \le P_1$.
	
	For any $i_1 \ge \lambda_1$, we may assume that
	$\lambda_j \le i_1 \le \lambda_{j+1} - 1$ for some $1 \le j \le q$. By the induction assumption and using the fact that $\deg(Z^H_{\lambda_j}|_C) \ge 2$, we know that
	\begin{eqnarray}
		S_{i_1} & := & h^0(X_1, L_{i_1}|_{X_1}) - \frac{(L_{i_1}|_{X_1})^{n-1}}{2(n-1)!} - \frac{(L_{i_1}|_{X_1})^{n-2}(Z^H_{\lambda_j}|_{X_1})}{2(n-2)!n} \nonumber \\
		& \le & \frac{n-1}{n}\sum_{i_2, \cdots, i_{n-1}} a_{i_1i_2} \cdots a_{i_1i_2 \cdots i_{n-1}} + \frac{d}{2} \sum_{m=1}^{n-2} B_m(X_1, H_1, \hat{P}_1). \label{i1>lambda1}
	\end{eqnarray}
	Let $\tilde{D}_0$ be the pullback of $D$ via $\tilde{X}_0 \to X$. Then $\tilde{D}_0$ is also horizontal with respect to $f_0$. By our assumption on $D$, we have $\deg(\tilde{D}_0|_C) > 0$. Similar to the above, we deduce that for any $i_1 < \lambda_1$, 
	\begin{eqnarray}
		S_{i_1} & := & h^0(X_1, L_{i_1}|_{X_1}) - \frac{(L_{i_1}|_{X_1})^{n-1}}{2(n-1)!} - \frac{(L_{i_1}|_{X_1})^{n-2}(\tilde{D}_0|_{X_1})}{2(n-2)!n_0} \nonumber \\
		& \le & \frac{n_0 - \varepsilon}{n_0}\sum_{i_2, \cdots, i_{n-1}} a_{i_1i_2} \cdots a_{i_1i_2 \cdots i_{n-1}} + \frac{d}{2} \sum_{m=1}^{n-2} B_m(X_1, H_1, \hat{P}_1). \label{i1<lambda1}
	\end{eqnarray}
	
	On the other hand, notice that $L_0 = L'_0 + (a_0-1)X_1$. By Proposition \ref{ln-1z}, we also have
	\begin{eqnarray}
		\frac{L_0^{n-1} \tilde{D}_0}{2(n-1)!n_0} 
		& \ge & \frac{\left((n-1)(a_0-1)L_0^{n-2}X_1 + (L'_0)^{n-1} \right) \tilde{D}_0}{2(n-1)!n_0}   \nonumber  \\
		& \ge & \sum_{i_1=0}^{\lambda_1 - 1} \frac{a_{i_1} (L_{i_1}|_{X_1})^{n-2}(\tilde{D}_0|_{X_1})}{2(n-2)!n_0}  - \frac{(L_0|_{X_1})^{n-2} (\tilde{D}_0|_{X_1})}{2(n-2)!n_0}. \label{ln-1d}
	\end{eqnarray}
	
	Notice that $L^{n-1}D = L_0^{n-1} \tilde{D}_0$ and in any case we have
	$$
	\frac{n-1}{n} \le \frac{n_0 - \varepsilon}{n_0}.
	$$
	Combine (\ref{rough}), (\ref{i1>lambda1}), (\ref{i1<lambda1}) and (\ref{ln-1d}) together. Thus it follows that
	\begin{eqnarray*}
		& & h^0(X, L) - \frac{L^n}{2n!} - \frac{L^{n-1}D}{2(n-1)!n_0} \\
		& \le & \sum_{i_1} a_{i_1} S_{i_1} +  \frac{(L_0|_{X_1})^{n-1}}{2(n-1)!} + \frac{(L_0|_{X_1})^{n-2}(\tilde{D}_0|_{X_1})}{2(n-2)!n_0}\\
		& \le & \frac{n_0 - \varepsilon}{n_0} \sum_{i_1, \cdots, i_{n-1}} a_{i_1} \cdots a_{i_1 \cdots i_{n-1}} + Err(X, H, L), 
	\end{eqnarray*}
	where
	\begin{eqnarray*}
		Err(X, H, L) & = &\frac{d}{2} \sum_{i_1} a_{i_1} \sum_{m=1}^{n-2} B_{m}(X_1, H_1, \hat{P}_1) +  \frac{(L_0|_{X_1})^{n-1}}{2(n-1)!} + \frac{(L_0|_{X_1})^{n-2}(\tilde{D}_0|_{X_1})}{2(n-2)!n_0} \\
		& \le & \frac{d}{2} \sum_{m=2}^{n-1} B_m(X, H, \hat{P}) + \frac{P_1^{n-1}}{2(n-1)!} \\
		& \le & \frac{d}{2} \sum_{m=1}^{n-1} B_m(X, H, \hat{P}).
	\end{eqnarray*} 
	The above inequality about $Err(X, H, L)$ is from Proposition \ref{bmxhp}. Hence the whole proof is completed.
\end{proof}

A slight modification of the above proof will give the following result.

\begin{prop} \label{h0llnhyper1}
	Suppose that $C$ is hyperelliptic and $L$ is not $f$-special. Then for any integer $n_0 \ge n$, we have
	$$
	h^0(X, L) - \frac{L^n}{2n!} \le  \frac{n_0 - 1}{n_0} \sum_{i_1, \cdots, i_{n-1}} a_{i_1} \cdots a_{i_1 \cdots i_{n-1}} + \frac{d}{2} \sum_{m=1}^{n-1} B_m(X, H, \hat{P}).
	$$
\end{prop}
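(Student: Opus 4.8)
The plan is to follow the structure of the proof of Proposition \ref{h0llnhyper} — an induction on $n=\dim X$ feeding Propositions \ref{h0}, \ref{numericalinequality}(1), \ref{ln-1z} and Lemma \ref{error1}/Proposition \ref{bmxhp} into one another — with the single simplification that no auxiliary horizontal divisor needs to be carried along. The key observation is that ``$L$ not $f$-special'', together with the standing fact that $L$ is numerically $f$-special (so $\deg(L|_C)\le 2g-2$) and the base-point-freeness of $|L|$ (so $h^0(C,L|_C)\ge 1$), forces $h^1(C,L|_C)=0$; hence by Riemann--Roch $h^0(C,L|_C)=\deg(L|_C)-g+1\le\tfrac12\deg(L|_C)$, which is precisely the classical Clifford bound on $C$ with the ``$+1$'' deleted. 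In Proposition \ref{h0llnhyper} that missing ``$+1$'' had to be recovered by subtracting $\tfrac{L^{n-1}D}{2(n-1)!\,n_0}$ on the left; here it is present from the outset, which is why one should expect the coefficient $\tfrac{n_0-1}{n_0}$ (the ``$\varepsilon=1$'' value) uniformly, and no divisor $D$ on the left.

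For the base case $n=2$, $X$ is a fibred surface over $\PP^1$ with $C=X_1$ and $B_1(X,H,\hat P)=1$; here I would invoke the part of \cite[Theorem 3.4]{Zhang_Slope} treating non-$f$-special $L$, and, using $L_0=L_0'+(a_0-1)C$ with $L_0'$ nef together with $\deg(\hat P|_C)=d$, read off $h^0(X,L)-\tfrac{L^2}{4}\le\tfrac{n_0-1}{n_0}\sum_{i_1}a_{i_1}+\tfrac d2$. For the inductive step, assuming the statement through dimension $n-1$ and taking $n_0\ge n$, I would pass to the blow-up of Section \ref{iteratedfiltration} to obtain the tower $\pi_0\colon\tilde X_0\to\tilde Y_0\to\PP^1$, the filtration $L_0>L_1>\cdots>L_N\ge 0$, the base loci $Z_i$, the thresholds $a_{i_1}$, and the restricted data $P_1,\hat P_1,H_1$ (for which $\hat P_1-f_0^{*}H_1$ is pseudo-effective, $\deg(\hat P_1|_C)=d$, and $L_{i_1}\le P_1\le\hat P_1$). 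Propositions \ref{h0}, \ref{numericalinequality}(1) and \ref{ln-1z} then bound $h^0(X,L)-\tfrac{L^n}{2n!}$ in terms of $\sum_{i_1}a_{i_1}\bigl(h^0(X_1,L_{i_1}|_{X_1})-\tfrac{(L_{i_1}|_{X_1})^{n-1}}{2(n-1)!}\bigr)$, the boundary term $\tfrac{(L_0|_{X_1})^{n-1}}{2(n-1)!}$, and the horizontal-base-locus terms, exactly as in \eqref{rough}. To each bracketed summand I would apply the inductive hypothesis in dimension $n-1$ with the same $n_0$ (valid since $n_0\ge n>n-1$): Proposition \ref{h0llnhyper1} when $L_{i_1}|_C$ is still non-special, and Proposition \ref{h0llnhyper} — with a horizontal divisor of fibre-degree $\ge 2$ (for instance $Z^H_{\lambda_j}$), so that its $\varepsilon$ equals $1$ — when $L_{i_1}|_C$ has become special, necessarily of the form $l_{i_1}g_2^1$ since $C$ is hyperelliptic and $|L_{i_1}|_C|$ is base-point-free. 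In all cases the resulting coefficient of $\sum_{i_2,\dots,i_{n-1}}a_{i_1i_2}\cdots a_{i_1\cdots i_{n-1}}$ is at most $\tfrac{n_0-1}{n_0}$. Finally I would sum over $i_1$, convert $\tfrac d2\sum_{i_1}a_{i_1}\sum_{m=1}^{n-2}B_m(X_1,H_1,\hat P_1)$ into $\tfrac d2\sum_{m=2}^{n-1}B_m(X,H,\hat P)$ via Lemma \ref{error1} and Proposition \ref{bmxhp}, and absorb the boundary term $\tfrac{(L_0|_{X_1})^{n-1}}{2(n-1)!}\le\tfrac{P_1^{n-1}}{2(n-1)!}$ into $\tfrac d2B_1(X,H,\hat P)$.

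The step I expect to be the real obstacle — and where the ``slight modification'' genuinely lives — is the treatment of the horizontal base loci $Z_{\lambda_j}$. In Proposition \ref{h0llnhyper} the positive $Z$-contributions from Proposition \ref{numericalinequality}(1) were retained and absorbed, through Proposition \ref{ln-1z}, by matching $Z^H_{\lambda_j}$-terms subtracted on the left in the inductive estimates for the indices $i_1\ge\lambda_1$; Proposition \ref{h0llnhyper1} carries no such left-hand term. I would handle this either by discarding those positive $Z$-contributions outright (this only weakens the lower bound for $L^n$, hence only enlarges the upper bound being proved, and should be covered by the extra slack from the non-special fibrewise estimate), or, keeping them, by routing precisely the special-fibre indices $i_1\ge\lambda_1$ through Proposition \ref{h0llnhyper} (whose auxiliary-divisor slot performs the cancellation), while the non-special indices — where $L_{i_1}|_C=L|_C$ and all relevant $Z_i$ are vertical, so nothing needs cancelling — go through Proposition \ref{h0llnhyper1}. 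Either way the point to check is that every summand ends with coefficient at most $\tfrac{n_0-1}{n_0}$, using $\tfrac{n-1}{n}\le\tfrac{n_0-1}{n_0}$ and the absence of a ``$+1$'' on the non-special fibre. The only other item demanding care is extracting the exact non-$f$-special surface estimate from \cite[Theorem 3.4]{Zhang_Slope} for the base case; with that and the base-locus bookkeeping in hand, the induction runs as for Proposition \ref{h0llnhyper} and is term-by-term no worse.
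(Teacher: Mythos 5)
Your second alternative — routing the non-special indices $i_1$ through Proposition \ref{h0llnhyper1} by induction and the remaining ones through Proposition \ref{h0llnhyper} with $D=Z^H_{\lambda_j}$ — is exactly the paper's argument, and the inductive framework (Propositions \ref{h0}, \ref{numericalinequality}(1), \ref{ln-1z}, then Lemma \ref{error1}/Proposition \ref{bmxhp} for the error bookkeeping, with $\frac{n-1}{n}\le\frac{n_0-1}{n_0}$ at the end) is right. But two points in your account of that route need fixing. The dividing line between the two cases is $\mu$, the smallest index with $L_\mu$ $f_0$-special; one has $\mu=\lambda_{j_0}$ for some $j_0$, but generally $j_0>1$. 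Your claim that for non-special indices one has $L_{i_1}|_C=L|_C$ and ``all relevant $Z_i$ are vertical'' is wrong: a base-point-free restricted system on a hyperelliptic $C$ of any degree from $g$ up to $d$ is still non-special, so the degree can drop past several $\lambda_j$'s while remaining inside the non-special range. For such $\lambda_1\le i_1<\mu$ you cannot route through Proposition \ref{h0llnhyper} — its hypothesis that $L$ be $f$-special fails — and they must go through Proposition \ref{h0llnhyper1}; the corresponding $Z$-contributions in \eqref{rough} are then simply dropped (they only help). The second, subtler point: to take $\varepsilon=1$ in Proposition \ref{h0llnhyper} at the transition index $j=j_0$ you need $\deg(Z^H_{\lambda_{j_0}}|_C)\ge2$, but ``$L_{\lambda_{j_0}}|_C\sim l\,g_2^1$'' only controls the lower degree, not $\deg(L_{\lambda_{j_0}-1}|_C)$, which a priori could be odd and make the drop $1$. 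The paper closes this via Remark \ref{beingspecial}: a drop of $1$ would make $L_{\lambda_{j_0}-1}|_C\sim l\,g_2^1+p$ itself special (using that $L_{\lambda_{j_0}-1}$ is numerically $f_0$-special), contradicting the choice of $\mu$. This one-line observation is the genuinely new content of the proof and your proposal glosses over it.

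Your first alternative — discarding the $Z$-contributions from Proposition \ref{numericalinequality}(1) outright — cannot be made to work. For $i_1\ge\mu$ the divisor $L_{i_1}|_{X_1}$ is $f_0|_{X_1}$-special, and the only inductive estimate available in that case is Proposition \ref{h0llnhyper}, which bounds the quantity only after subtracting $\frac{(L_{i_1}|_{X_1})^{n-2}(D|_{X_1})}{2(n-2)!\,n_0}$ for a horizontal $D$ with $\deg(D|_C)>0$. With the matching $Z$-term gone from \eqref{rough}, that subtraction reappears on the right as an uncancelled positive surplus which the error budget $\frac{d}{2}\sum_m B_m(X,H,\hat P)$ does not absorb, and there is no slack ``from the non-special fibrewise estimate'' to pay for it: at the allowed value $n_0=n$ the coefficient $\frac{n-1}{n}$ coming out of the induction already equals the target $\frac{n_0-1}{n_0}$, with nothing to spare. (As a smaller matter, the paper's base case $n=2$ is taken from \cite{YuanZhang_RelNoether}, not from the non-special part of \cite[Theorem 3.4]{Zhang_Slope}, which treats different hypotheses.)
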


\begin{proof}
	The proof is also by induction. When $n=2$, this result has been proved in \cite[Page 101--102, Section 2.3]{YuanZhang_RelNoether} where the authors proved a slightly stronger result that
	$$
	h^0(X, L) - \frac{L^2}{4} \le \frac{1}{2} \sum_{i_1 > 0} a_{i_1} + \frac{d}{2}.
	$$
	
	For general $n$, we adopt the same notation as in the proof of Proposition \ref{h0llnhyper}. Moreover, since $L$ is not $f$-special, there is an integer $0 < \mu \le N+1$ such that for any $0 \le i_1 < \mu$, the divisor $L_{i_1}$ is not $f_0$-special. This implies that $\mu = \lambda_{j_0}$ for some $1 \le j_0 \le q+1$.
	
	Notice that \eqref{rough} also holds here. If $i_1 < \mu$, then $L_{i_1}|_{X_1}$ is not $f_0|_{X_1}$-special. Thus by induction, we have
	\begin{eqnarray}
		h^0(X_1, L_{i_1}|_{X_1}) - \frac{(L_{i_1}|_{X_1})^{n-1}}{2(n-1)!} & \le &  \frac{n-1}{n}\sum_{i_2, \cdots, i_{n-1}} a_{i_1i_2} \cdots a_{i_1i_2 \cdots i_{n-1}} + \nonumber \\
		& & \frac{d}{2} \sum_{m=1}^{n-2} B_m(X_1, H_1, \hat{P}_1). \label{hypernonspecial}
	\end{eqnarray}

	If $i_1 \ge \mu$, then $i_1 \ge \lambda_1$. Similar as before, we assume that $\lambda_j \le i_1 \le \lambda_{j+1}-1$ for some $1\le j \le q+1$. Then we claim that \eqref{i1>lambda1} holds here. Actually, we only need to show that $\deg(Z^H_{\lambda_j}|_C) \ge 2$. This is true when $L_{\lambda_j - 1}$ is $f_0$-special. If $L_{\lambda_j - 1}$ is not $f_0$-special, then $j=j_0$. In particular, $L_{\lambda_j} = L_\mu$ is $f_0$-special. Thus by Remark \ref{beingspecial}, $\deg(Z^H_{\lambda_j}|_C) \ge 2$.
	
	With \eqref{rough}, \eqref{i1>lambda1} and \eqref{hypernonspecial}, the rest argument can be proceeded identically as in the proof of Proposition \ref{h0llnhyper}. Hence the proof is completed. 
\end{proof}

\subsubsection{Non-hyperelliptic case} Here we assume that $C$ is non-hyperelliptic.

\begin{prop} \label{h0llnnonhyper1}
	Suppose that $C$ is non-hyperelliptic and $\deg(L|_C) = 0$. Let $D \ge 0$ be a horizontal $\QQ$-divisor on $X$ with respect to $f$ such that $\deg(D|_C) > 0$ and $D + L \le P$. Then for any integer $n_0 \ge n$, the inequality in Proposition \ref{h0llnhyper} also holds.
\end{prop}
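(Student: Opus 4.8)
The plan is to run the same induction on $n = \dim X$ as in the proof of Proposition \ref{h0llnhyper}, exploiting the fact that the hypothesis $\deg(L|_C) = 0$ trivializes all Clifford-type input. First I would record the observations that drive the argument. Since $|L|$ is base point free, so is $|L|_C|$ on the general fiber $C$; as $\deg(L|_C) = 0$ and $g(C) \ge 2$, this forces $L|_C \sim 0$, hence $h^0(C, L|_C) = 1$. Next, each divisor $L_i$ in the filtration of Theorem \ref{filtration} is nef with $L_0 - L_i$ effective, so $0 \le \deg(L_i|_C) \le \deg(L_0|_C) = 0$; thus $\deg(L_i|_C) = 0$ for all $i$, and the telescoping relation $\deg(L_i|_C) = \deg(L_{i+1}|_C) + \deg(Z_{i+1}^H|_C)$ gives $\deg(Z_i^H|_C) = 0$ for all $i$. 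In the notation of Section \ref{horizontalbaseloci} this says $q = 0$: there is no horizontal base locus, so the branch ``$i_1 \ge \lambda_1$'' in the proof of Proposition \ref{h0llnhyper} never occurs and only the ``$i_1 < \lambda_1$'' estimate is needed. Finally, all of this is preserved after restriction to a general fiber $X_1$, so the hypothesis $\deg(L|_C) = 0$ propagates through the tree-like filtration, which is what lets the induction close.

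For the base case $n = 2$, Proposition \ref{h0} applied to the curve fibration underlying the construction gives $h^0(X, L) \le \sum_{i_1} a_{i_1} h^0(C, L_{i_1}|_C) = \sum_{i_1} a_{i_1}$, since every $L_{i_1}|_C$ has degree $0$ and is base point free, hence $\sim 0$. It then suffices to verify
$$
\frac{\varepsilon}{n_0} \sum_{i_1} a_{i_1} \le \frac{L^2}{4} + \frac{LD}{2n_0} + \frac{d}{2},
$$
and this follows from $L = L_0 \ge \bigl(\sum_{i_1} a_{i_1} - 1\bigr) X_1 + L'_N$ with $L'_N$ nef (as in the proof of Lemma \ref{error1}), which yields $LD \ge \bigl(\sum_{i_1} a_{i_1} - 1\bigr)\deg(D|_C)$; splitting into the cases $\deg(D|_C) = 1$ (so $\varepsilon = \tfrac12$) and $\deg(D|_C) \ge 2$ (so $\varepsilon = 1$), and using $\tfrac{1}{2n_0} \le \tfrac12 \le \tfrac{d}{2}$ together with $B_1(X, H, \hat{P}) = 1$, finishes the base case.

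For the inductive step I would reproduce the chain of inequalities leading to \eqref{rough}--\eqref{ln-1d} in the proof of Proposition \ref{h0llnhyper}, now with every index satisfying $i_1 < \lambda_1 = N + 1$: set up the tower $\tilde{X}_0 \to \tilde{Y}_0 \to \PP^1$ with filtration $L_0 > \cdots > L_N$ and base loci $Z_i$; combine Proposition \ref{h0} and Proposition \ref{numericalinequality} (1) (discarding the nonnegative $Z_{i_1}$-terms, which is harmless since $q = 0$); then apply the inductive hypothesis on $X_1$ to each restricted datum, taking $D_1 := \tilde{D}_0|_{X_1}$. Here one checks, exactly as in Proposition \ref{h0llnhyper}, that $(X_1, H_1, \hat{P}_1, P_1, L_{i_1}|_{X_1})$ satisfies all the standing hypotheses of Section \ref{Clifforderror}, that $\deg((L_{i_1}|_{X_1})|_C) = 0$, that $D_1 + L_{i_1}|_{X_1} \le P_1$ (using $L_{i_1} \le L_0 \le P - D$) and $\deg(D_1|_C) = \deg(D|_C) > 0$, with the value of $\varepsilon$ unchanged; then Proposition \ref{ln-1z} (its $j = 0$ instance) absorbs the term $\tfrac{L^{n-1}D}{2(n-1)!\,n_0}$ via \eqref{ln-1d}. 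The resulting error term is handled as before: Proposition \ref{bmxhp} converts $\sum_{i_1} a_{i_1} \sum_{m=1}^{n-2} B_m(X_1, H_1, \hat{P}_1)$ into $\sum_{m=2}^{n-1} B_m(X, H, \hat{P})$, and the leftover $\tfrac{(L_0|_{X_1})^{n-1}}{2(n-1)!} + \tfrac{(L_0|_{X_1})^{n-2}(\tilde{D}_0|_{X_1})}{2(n-2)!\,n_0}$ is bounded by $\tfrac{P_1^{n-1}}{2(n-1)!}$ (using $L_0|_{X_1} + \tilde{D}_0|_{X_1} \le P_1$ and $\tfrac{n-1}{n_0} \le 1$), and then by $\tfrac{d}{2} B_1(X, H, \hat{P})$ (using $P_1 \le \hat{P}_1$, $\hat{P}_1^{n-1} = \hat{P}^{n-1}(f^*H)$, and $2^{n-2}(n-1)! \ge 1$).

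I do not anticipate a serious obstacle, since the argument is structurally parallel to Proposition \ref{h0llnhyper}. The one genuinely new point is the propagation of $\deg(L|_C) = 0$ through the tree-like filtration --- in particular the vanishing $\deg(Z_i^H|_C) = 0$ --- which is precisely what makes the hyperelliptic/special bookkeeping of Proposition \ref{h0llnhyper} unnecessary and lets the bare equality $h^0(C, L|_C) = 1$ stand in for any Clifford inequality. The remaining work is the routine verification that the restricted data at each step still satisfy the standing hypotheses of Section \ref{Clifforderror}, so the induction applies, together with the elementary arithmetic in the base case matching $\varepsilon$ to $\deg(D|_C)$.
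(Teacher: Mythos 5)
Your proof is correct and follows the same overall strategy as the paper's (which is stated only in sketch form): the paper likewise reduces to the case $L^n = 0$, handles $n=2$ by the estimate $h^0(X,L)\le a_0$ from \cite[Theorem 3.4(1)]{Zhang_Slope} together with the bound $LD \ge (a_0-1)\deg(D|_C)$, and then invokes (\ref{rough}), (\ref{i1<lambda1}) and (\ref{ln-1d}) for the inductive step --- exactly the branch your observation $q=0$ isolates. Your variant of the base case, using Proposition \ref{h0} to get $h^0(X,L)\le \sum a_{i_1}$ rather than citing the external surface result, is a slightly weaker but equally adequate input, and is compensated by the matching bound $LD\ge(\sum a_{i_1}-1)\deg(D|_C)$; the arithmetic closes because $d=\deg(\hat P|_C)\ge \deg(D|_C)\ge 1$, a point worth stating explicitly since your final step $\tfrac12\le\tfrac{d}{2}$ silently uses it.
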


\begin{proof}
	The proof here is similar as before. The only difference is that here $L^n=0$. However, it does not affect the whole proof. For example, when $n=2$, we have $n_0 \ge 2$. By \cite[Theorem 3.4 (1)]{Zhang_Slope} and a similar argument to the proof of Proposition \ref{h0llnhyper}, we obtain
	$$
	h^0(X, L) - \frac{LD}{2n_0} \le a_0 - \frac{LD}{2n_0} \le \left(\frac{n_0 - \varepsilon}{n_0}\right) a_0 + \frac{d}{2} B_1(X, H, \hat{P}).
	$$
	For general $n$, the proof is based on exactly the same argument involving (\ref{rough}), (\ref{i1<lambda1}) and (\ref{ln-1d}). We omit the proof here and leave it to the interested reader.
\end{proof}

\begin{prop} \label{h0llnnonhyper2}
	Suppose that $C$ is non-hyperelliptic and $0 < \deg(L|_C) < d$. Then for any integer $n_0 \ge n$, we have
	$$
	h^0(X, L) - \frac{L^n}{2n!} \le \frac{n_0-1}{n_0}\sum_{i_1, \cdots, i_{n-1}} a_{i_1} \cdots a_{i_1 \cdots i_{n-1}} \\
	+ \frac{d}{2} \sum_{m=1}^{n-1} B_m(X, H, \hat{P}).
	$$
\end{prop}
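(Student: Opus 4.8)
The plan is to argue by induction on $n=\dim X$, paralleling the proofs of Propositions \ref{h0llnhyper} and \ref{h0llnhyper1}; the only genuinely new point is the treatment of sub-fibres on which the restricted divisor has $C$-degree zero. For the base case $n=2$ I would quote the surface estimate \cite[Theorem 3.4]{Zhang_Slope} and feed in Clifford's inequality fibrewise: since $C$ is non-hyperelliptic and $|L_{i_1}|$ is base-point free, each $L_{i_1}|_C$ is base-point free of degree $\ne 1$, so either $\deg(L_{i_1}|_C)=0$ and $h^0(C,L_{i_1}|_C)=1$, or $2 \le \deg(L_{i_1}|_C)\le\deg(L|_C)<d\le 2g-2$ and the sharp non-hyperelliptic Clifford bound $2h^0(C,L_{i_1}|_C)\le\deg(L_{i_1}|_C)+1$ holds. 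Combining $h^0(X,L)\le\sum_{i_1}a_{i_1}h^0(C,L_{i_1}|_C)$ (Proposition \ref{h0} for $n=2$) with Proposition \ref{numericalinequality}(1) then produces $h^0(X,L)-\frac{L^2}{4}\le\frac12\sum_{i_1}a_{i_1}+\frac{d}{2}$: the at most one degree-zero term $L_N$ overcounts by $\tfrac12 a_N$, but this is absorbed by the contribution $\tfrac14 a_N\deg(Z_N|_C)\ge\tfrac12 a_N$ of its base locus in Proposition \ref{numericalinequality}(1), using $\deg(Z_N|_C)=\deg(L_{N-1}|_C)\ge 2$. Since $B_1(X,H,\hat P)=1$ and $\frac{n_0-1}{n_0}\ge\frac12$ for $n_0\ge2$, this settles $n=2$.

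For the inductive step I would reuse the set-up from the proof of Proposition \ref{h0llnhyper}: pass to $\tilde X_0$, form the tower $\pi_0\colon\tilde X_0\to\PP^1$ with general fibre $X_1$, the filtration $L_0>\dots>L_N$ with base loci $Z_1,\dots,Z_N$, the indices $0=\lambda_0<\lambda_1<\dots<\lambda_q<\lambda_{q+1}=N+1$, and the restricted data $P_1,\hat P_1,H_1$ on $X_1$. Applying Propositions \ref{h0}, \ref{numericalinequality}(1) and \ref{ln-1z} reduces the claim, exactly as in inequality (\ref{rough}), to bounding $\sum_{i_1}a_{i_1}S_{i_1}$ together with the leftover terms $\frac{(L_0|_{X_1})^{n-1}}{2(n-1)!}$ and $\frac{d}{2}\sum_{i_1}a_{i_1}\sum_{m=1}^{n-2}B_m(X_1,H_1,\hat P_1)$, where $S_{i_1}$ is $h^0(X_1,L_{i_1}|_{X_1})-\frac{(L_{i_1}|_{X_1})^{n-1}}{2(n-1)!}$ minus the available horizontal-base-locus correction. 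The key structural observation is that $\deg(L_{i_1}|_C)$ is non-increasing, constant on each block $\lambda_j\le i_1\le\lambda_{j+1}-1$, never equal to $1$ (base-point freeness), and equal to $\deg(L|_C)<d$ on the initial block; hence the value $0$, once attained, can only occur on the last block $\lambda_q\le i_1\le N$, and that block is then preceded by a block of constant degree $\ge 2$, so $\deg(Z^H_{\lambda_q}|_C)=\deg(L_{\lambda_q-1}|_C)\ge 2$.

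On a block of degree $\ge 2$ I would apply the inductive hypothesis of the present proposition to $L_{i_1}|_{X_1}$ on $X_1$ (with the same $n_0$, legitimate since $n_0\ge n>n-1$) and discard the non-negative $Z^H$-correction, getting $S_{i_1}\le\frac{n_0-1}{n_0}\sum_{i_2,\dots,i_{n-1}}a_{i_1 i_2}\cdots a_{i_1\cdots i_{n-1}}+\frac{d}{2}\sum_{m=1}^{n-2}B_m(X_1,H_1,\hat P_1)$. On the degree-zero block I would instead apply Proposition \ref{h0llnnonhyper1} to $L_{i_1}|_{X_1}$ with the auxiliary horizontal divisor $D=Z^H_{\lambda_q}|_{X_1}$: one checks $D+L_{i_1}|_{X_1}\le L_{\lambda_q-1}|_{X_1}\le P_1$, and since $\deg(D|_C)=\deg(Z^H_{\lambda_q}|_C)\ge 2$ the exceptional value $\varepsilon=\tfrac12$ does not occur, so again $S_{i_1}\le\frac{n_0-1}{n_0}\sum_{i_2,\dots}a_{i_1 i_2}\cdots+\frac{d}{2}\sum_{m=1}^{n-2}B_m(X_1,H_1,\hat P_1)$; here the $\tfrac1{n_0}$-weighted $D$-term demanded by Proposition \ref{h0llnnonhyper1} is dominated by the $\tfrac1n$-weighted $Z^H$-correction supplied by Proposition \ref{ln-1z} because $n_0\ge n$. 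Summing, converting $\sum_{i_1}a_{i_1}\sum_m B_m(X_1,H_1,\hat P_1)$ into $\sum_{m=2}^{n-1}B_m(X,H,\hat P)$ via Proposition \ref{bmxhp}, and bounding $\frac{(L_0|_{X_1})^{n-1}}{2(n-1)!}\le\frac{(\hat P_1|_{X_1})^{n-1}}{2(n-1)!}\le\frac{d}{2}B_1(X,H,\hat P)$ (which holds since $1\le 2^{n-2}(n-1)!$) then yields the assertion.

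The main obstacle, as in Proposition \ref{h0llnhyper1}, is the sub-fibres of $C$-degree $0$, where Clifford gives no gain and one must extract the required saving from the horizontal base locus while keeping the leading coefficient down to $\frac{n_0-1}{n_0}$. What makes it work — and what I would need to verify carefully — is the double use of base-point freeness: it forbids fibre degree $1$, so on the one hand the degree-$0$ locus is confined to the last block of the filtration, and on the other hand the base locus at the jump to $0$ has $C$-degree at least $2$, which is exactly what rules out the loss $\varepsilon=\tfrac12$, both in the base case (where it supplies the compensating $\tfrac14 a_N\deg(Z_N|_C)\ge\tfrac12 a_N$) and in the inductive step (where it validates the application of Proposition \ref{h0llnnonhyper1} with $\varepsilon=1$).
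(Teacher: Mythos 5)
Your inductive step coincides with the paper's: apply the present proposition (with the same $n_0$) to the positive-degree blocks, and on the degree-zero block invoke Proposition~\ref{h0llnnonhyper1} with the auxiliary horizontal divisor $D = Z^H_{\lambda_q}|_{X_1}$, noting that base-point-freeness together with $g \ge 2$ forces $\deg(Z^H_{\lambda_q}|_C) = \deg(L_{\lambda_q-1}|_C) \ge 2$ and hence $\varepsilon = 1$; then recombine via~(\ref{rough}) and Proposition~\ref{bmxhp}. You make the role of base-point-freeness more explicit than the paper's terse ``$g \ge 2$ implies $\deg(Z^H_{\lambda_q}|_C) \ge 2$,'' which is a nice clarification.

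One caution on the base case $n=2$: your heuristic re-derivation is not correct as stated. The degree-zero terms of the filtration form a \emph{final block} $\{\lambda_q,\dots,N\}$ whose length can be arbitrary, not ``at most one term $L_N$''; moreover the compensating quantity $\tfrac14 a_N\deg(Z_N|_C)$ that you want to use vanishes whenever $N>\lambda_q$, since $Z_N$ is then vertical and $\deg(Z_N|_C)=0$, so only $Z_{\lambda_q}$ contributes horizontally. The attempted absorption of the $\tfrac12\sum_{i\ge\lambda_q}a_i$ overcount therefore fails. The paper avoids this by simply quoting \cite[Theorem 3.4(4)]{Zhang_Slope} (remarking that its proof extends verbatim from $f$-special to numerically $f$-special), and since you also ultimately cite that theorem, your argument survives --- but the suggested rederivation should not be trusted as written.
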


As we assume in this section that $|L|$ is base point free and $g \ge 2$, the inequality $0 < \deg(L|_C) < d$ actually implies that $2 \le \deg(L|_C) < d$.

\begin{proof}
	Since the proof here is very similar to that of Proposition \ref{h0llnhyper}, we sketch it here and only emphasize the differences. We also use the notation in Section \ref{horizontalbaseloci}.
	
	When $n=2$, $n_0 \ge 2$. By \cite[Theorem 3.4 (4)]{Zhang_Slope}, when $L$ is $f$-special, we have
	$$
	h^0(X, L) - \frac{L^2}{4} \le \frac{1}{2} a_0 + \frac{1}{4} \sum_{i_1 > 0} a_{i_1} + \frac{LC}{2} \le \frac{n_0-1}{n_0} \sum_{i_1} a_{i_1} + \frac{d}{2} B_1(X, H, \hat{P}).
	$$
	Notice that the proof of \cite[Theorem 3.4 (4)]{Zhang_Slope} actually applies to the case when $L$ is numerically $f$-special verbatim, and the same result still holds.
	
	For general $n$, we follow the same notation as in the proof of Proposition \ref{h0llnhyper}. The inequality (\ref{rough}) still holds here. If $\deg(L_{i_1}|_C) > 0$, by induction, we obtain that
	\begin{eqnarray*}
		h^0(X_1, L_{i_1}|_{X_1}) - \frac{(L_{i_1}|_{X_1})^{n-1}}{2(n-1)!} & \le & \frac{n_0-1}{n_0}\sum_{i_2, \cdots, i_{n-1}} a_{i_1i_2} \cdots a_{i_1i_2 \cdots i_{n-1}} + \\ & & 
		\frac{d}{2} \sum_{m=1}^{n-2} B_m(X_1, H_1, \hat{P}_1).
	\end{eqnarray*}
	If $\deg(L_{i_1}|_C) = 0$, then it implies that $i_1 \ge \lambda_q$. Notice that $g \ge 2$ implies $\deg(Z_{\lambda_q}^H|_C) \ge 2$ in this case. By Proposition \ref{h0llnnonhyper1}, we deduce that 
	\begin{eqnarray*}
	& & h^0(X_1, L_{i_1}|_{X_1}) - \frac{(L_{i_1}|_{X_1})^{n-1}}{2(n-1)!} - \frac{(L_i|_{X_1})^{n-2}(Z_{\lambda_q}^H|_{X_1})}{2(n-2)!n} \\
	& \le & \frac{n-1}{n}\sum_{i_2, \cdots, i_{n-1}} a_{i_1i_2} \cdots a_{i_1i_2 \cdots i_{n-1}} + \frac{d}{2} \sum_{m=1}^{n-2} B_m(X_1, H_1, \hat{P}_1) 
	\end{eqnarray*}
	in exactly the same way as for (\ref{i1>lambda1}). Then the proof can be completed via a similar way to that of Proposition \ref{h0llnhyper}.
\end{proof}

\subsection{Estimate of $PL^{n-1}$} 
In this subsection, we mainly consider the estimate of $PL^{n-1}$.
\begin{prop} \label{pln-11}
	Let the notation be as before. Then we have
	$$
	\frac{PL^{n-1}}{d (n-1)!} \ge \sum_{i_1, \cdots, i_{n-1}} a_{i_1} \cdots a_{i_1 \cdots i_{n-1}} - \sum_{m=1}^{n-1} B_m(X, H, \hat{P}).
	$$
\end{prop}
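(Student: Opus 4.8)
The plan is to argue by induction on $n=\dim X$, exploiting the self-similar structure of the tree-like filtration: the first step of the construction in Section~\ref{iteratedfiltration} fibers $\tilde X_0$ over $\PP^1$ with general fibre $X_1$, and restricting $H$, $\hat P$, $P$ and each $L_{i_1}$ to $X_1$ recovers the standing hypotheses of Section~\ref{Clifforderror} in dimension $n-1$, where Proposition~\ref{pln-11} may be assumed. For the base case $n=2$ the assertion reads $PL/d\ge\sum_{i_1}a_{i_1}-1$ (recall $B_1(X,H,\hat P)=1$ by the remark following Definition~\ref{bmp}), and this is immediate from Lemma~\ref{error1} together with $P_1=P^{n-1}(f^*H)\ge\deg(P|_C)=d$ (Proposition~\ref{pmintersection}).

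For the inductive step, assume the statement in dimension $n-1$ and apply Proposition~\ref{numericalinequality}(2) to $\pi_0\colon\tilde X_0\to\PP^1$ with the nef divisor $\tilde P_0$ and the filtration $L_0>\cdots>L_N$; restricting the resulting intersection numbers to $X_1$ gives
$$
PL^{n-1}\ \ge\ (n-1)\sum_{i_1}a_{i_1}\,P_1(L_{i_1}|_{X_1})^{n-2}\ -\ (n-1)\,P_1(L_0|_{X_1})^{n-2}.
$$
The restricted data $(X_1,H_1,\hat P_1,P_1,L_{i_1}|_{X_1})$ satisfies the hypotheses of Section~\ref{Clifforderror} in dimension $n-1$ with $\deg(\hat P_1|_C)=d$ (as checked in the proof of Proposition~\ref{h0llnhyper}), so the induction hypothesis bounds each $P_1(L_{i_1}|_{X_1})^{n-2}$ below by $d(n-2)!\bigl(\sum_{i_2,\ldots,i_{n-1}}a_{i_1i_2}\cdots a_{i_1\cdots i_{n-1}}-\sum_{m=1}^{n-2}B_m(X_1,H_1,\hat P_1)\bigr)$. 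Substituting, using $(n-1)\cdot d(n-2)!=d(n-1)!$, and collecting the product sum, we arrive at
$$
\frac{PL^{n-1}}{d(n-1)!}\ \ge\ \sum_{i_1,\ldots,i_{n-1}}a_{i_1}\cdots a_{i_1\cdots i_{n-1}}\ -\ \Bigl(\sum_{i_1}a_{i_1}\Bigr)\sum_{m=1}^{n-2}B_m(X_1,H_1,\hat P_1)\ -\ \frac{P_1(L_0|_{X_1})^{n-2}}{d(n-2)!}.
$$

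It then remains to absorb the two negative error terms into $\sum_{m=1}^{n-1}B_m(X,H,\hat P)$. For the first, Proposition~\ref{bmxhp} applied with $\hat P$ in place of $P$ — legitimate since $\hat P\ge L$ and $\hat P-f^*H$ is pseudo-effective, so Lemma~\ref{error1} applies to $\hat P$ — gives $\bigl(\sum_{i_1}a_{i_1}\bigr)B_m(X_1,H_1,\hat P_1)\le B_{m+1}(X,H,\hat P)$ for $1\le m\le n-2$, so summing over $m$ bounds it by $\sum_{m=2}^{n-1}B_m(X,H,\hat P)$. For the second, since $L_0|_{X_1}\le P_1\le\hat P_1$ are nef we have $P_1(L_0|_{X_1})^{n-2}\le\hat P_1^{n-1}$, while a direct computation from Definition~\ref{bmp} gives $B_1(X,H,\hat P)=2^{n-2}\hat P_1^{n-1}/d$; as $2^{n-2}(n-2)!\ge1$ the term $P_1(L_0|_{X_1})^{n-2}/(d(n-2)!)$ is at most $B_1(X,H,\hat P)$, and adding the two bounds closes the induction.

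The one genuinely delicate point — and the main obstacle — is this error bookkeeping: the thresholds $a_{i_1}$ belong to the filtration of $L$ rather than of $\hat P$, so one must check that Lemma~\ref{error1} and Proposition~\ref{bmxhp} may legitimately be invoked with the larger divisor $\hat P$, and one must recognise that the boundary term $-(n-1)P_1(L_0|_{X_1})^{n-2}$ left over from Proposition~\ref{numericalinequality}(2) has exactly the size of $B_1(X,H,\hat P)$. Everything else is a routine reorganisation of intersection numbers.
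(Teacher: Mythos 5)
Your proof is correct and follows essentially the same route as the paper's: induct on $n$, apply Proposition~\ref{numericalinequality}(2) with $\tilde P_0$ to pass from $PL^{n-1}$ to a sum of the restricted quantities $P_1(L_{i_1}|_{X_1})^{n-2}$, invoke the induction hypothesis on the restricted data $(X_1,H_1,\hat P_1,P_1,L_{i_1}|_{X_1})$, and then absorb the two error terms into $\sum_m B_m(X,H,\hat P)$ via Proposition~\ref{bmxhp} and the bound $P_1(L_0|_{X_1})^{n-2}\le\hat P_1^{n-1}$. In fact you spell out two small points that the paper leaves implicit: for the base case $n=2$, Lemma~\ref{error1} alone gives only $\sum a_{i_1}-1\le PL/P_1$, so one really does need the extra observation $P_1=P\cdot f^*H=(\deg H)\,d\ge d$; and you correctly justify that Lemma~\ref{error1} and Proposition~\ref{bmxhp} apply with $\hat P$ (which dominates $L$ and has $\hat P-f^*H$ pseudo-effective) even though the thresholds $a_{i_1}$ come from the filtration of $L$ — exactly the reduction the paper silently performs when it cites "the estimate at the end of the proof of Proposition~\ref{h0llnhyper}."
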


\begin{proof}
	The proof is by induction. When $n=2$, the corresponding result in this case is just Lemma \ref{error1}.
	
	In the following, we assume that Proposition \ref{pln-11} holds up to dimension $n-1$. Suppose that $\dim X = n$. Resume the notation in Section \ref{firststep} once again. Let $H_1$, $P_1$ and $\hat{P}_1$ be the same as in the proof of Proposition \ref{h0llnhyper}. Then by Proposition \ref{numericalinequality} (2),
	$$
	\frac{PL^{n-1}}{d(n-1)!} \ge \sum_{i_1} a_{i_1} \frac{P_1 (L_{i_1}|_{X_1})^{n-2}}{d (n-2)!} - \frac{P_1(L_0|_{X_1})^{n-2}}{d (n-2)!}.
	$$
	By the induction assumption,
	$$
	\frac{P_1 (L_{i_1}|_{X_1})^{n-2}}{d (n-2)!} \ge \sum_{i_2, \cdots, i_{n-1}}  a_{i_1i_2} \cdots a_{i_1i_2 \cdots i_{n-1}} - \sum_{m=1}^{n-2} B_m(X_1, H_1, \hat{P}_1).
	$$
	Combine the above two inequalities, and it follows that
	\begin{eqnarray*}
		\frac{PL^{n-1}}{d(n-1)!} & \ge &  \sum_{i_1, \cdots, i_{n-1}} a_{i_1} \cdots a_{i_1 \cdots i_{n-1}} - \sum_{i_1} a_{i_1} \sum_{m=1}^{n-2} B_m(X_1, H_1, \hat{P}_1) \\
		& &  - \frac{P_1(L_0|_{X_1})^{n-2}}{d (n-2)!}.
	\end{eqnarray*}
	Using the estimate at the end of the proof of Proposition \ref{h0llnhyper} once again, we deduce that
	$$
    \sum_{i_1} a_{i_1} \sum_{m=1}^{n-2} B_m(X_1, H_1, \hat{P}_1) + \frac{P_1(L_0|_{X_1})^{n-2}}{d (n-2)!} \le \sum_{m=1}^{n-1} B_m(X, H, \hat{P}).
	$$
	Thus the proof is completed.
\end{proof}

\subsection{Proof of Theorem \ref{mainwitherror}}
We first remind of Theorem \ref{mainwitherror} here for the convenience of the reader.
\begin{theorem} [Theorem \ref{mainwitherror}]
	Let the notation be as above. We have
	$$
	h^0(X, L) \le \left(\frac{1}{2n!} + \frac{n-\varepsilon}{n!d}\right) PL^{n-1} + \frac{d+2}{2} \sum_{m=1}^{n-1} B_m(X, H, \hat{P}).
	$$
	Here $\varepsilon = \frac{1}{2}$ when $d = 1$ or when $C$ is hyperelliptic, $L$ is $f$-special with $\deg(L|_C) = d-1$. Otherwise, $\varepsilon = 1$.
\end{theorem}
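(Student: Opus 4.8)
The plan is to derive Theorem \ref{mainwitherror} by combining two families of estimates that are already in hand. On one side are the pointwise comparisons between $h^0(X,L)$ and $L^n$ given by Propositions \ref{h0llnhyper}, \ref{h0llnhyper1}, \ref{h0llnnonhyper1}, \ref{h0llnnonhyper2}: each of them controls $h^0(X,L)-\tfrac{L^n}{2n!}$, up to one further non-negative intersection-number correction in the two that carry an auxiliary divisor, by $\tfrac{n-\varepsilon}{n}S$ or $\tfrac{n-1}{n}S$ plus $\tfrac d2\sum_{m=1}^{n-1}B_m(X,H,\hat P)$, where $S:=\sum_{i_1,\dots,i_{n-1}}a_{i_1}\cdots a_{i_1\cdots i_{n-1}}$ is the tree-threshold sum. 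On the other side is the lower bound $S\le \tfrac{PL^{n-1}}{d(n-1)!}+\sum_{m=1}^{n-1}B_m(X,H,\hat P)$ of Proposition \ref{pln-11}. Feeding the latter into the former turns the $S$-term into a multiple of $PL^{n-1}$, after which the leftover pieces should collapse to the asserted inequality.

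First I would set $n_0=n$ in all of those propositions --- the slack $n_0\ge n$ there was only needed for their own inductions --- and split into cases according to whether $C$ is hyperelliptic, whether $L$ is $f$-special, and whether $e:=\deg(L|_C)$ equals $0$, lies strictly between $0$ and $d$, or equals $d$; base-point-freeness of $|L|$ together with $g\ge2$ rules out $e=1$. When $e<d$ I would bring in the horizontal $\QQ$-divisor $D:=\tfrac{t}{d-e}(P-L)^H$, where $t=1$ in precisely the two situations in which the theorem claims $\varepsilon=\tfrac12$ ($d=1$, which forces $e=0$; and $C$ hyperelliptic, $L$ $f$-special, $e=d-1$) and $t=2$ otherwise; then $D\ge 0$ is horizontal, $\deg(D|_C)=t$, and $D+L\le P$. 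Applying the appropriate one among Propositions \ref{h0llnhyper}--\ref{h0llnnonhyper2} (those featuring the divisor $D$ when $C$ is hyperelliptic with $L$ $f$-special, or $C$ is non-hyperelliptic with $e=0$; the $D$-free ones otherwise), and recalling that in the former the correction carries denominator $2(n-1)!\,n_0=2\cdot n!$, one arrives either at
$$h^0(X,L)-\frac{L^n}{2n!}-\frac{L^{n-1}D}{2\,n!}\le\frac{n-\varepsilon}{n}S+\frac d2\sum_{m=1}^{n-1}B_m(X,H,\hat P),$$
or, in the $D$-free cases (where then $\varepsilon=1$), at
$$h^0(X,L)-\frac{L^n}{2n!}\le\frac{n-1}{n}S+\frac d2\sum_{m=1}^{n-1}B_m(X,H,\hat P).$$

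The remaining step is bookkeeping: substitute Proposition \ref{pln-11} to eliminate $S$; use that $L$ is nef while $P-L$ (resp.\ $P-L-D$) is effective, so that $L^n\le PL^{n-1}$ and $L^{n-1}(L+D)\le L^{n-1}P=PL^{n-1}$, whence $\tfrac{L^n}{2n!}+\tfrac{L^{n-1}D}{2\,n!}=\tfrac{L^{n-1}(L+D)}{2\,n!}\le\tfrac{PL^{n-1}}{2n!}$; and finally invoke $\tfrac{n-\varepsilon}{n}\le1$ to merge the two $B_m$-sums into $\tfrac{d+2}{2}\sum_{m=1}^{n-1}B_m(X,H,\hat P)$. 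This yields exactly $h^0(X,L)\le\big(\tfrac1{2n!}+\tfrac{n-\varepsilon}{n!\,d}\big)PL^{n-1}+\tfrac{d+2}{2}\sum_{m=1}^{n-1}B_m(X,H,\hat P)$. The single genuinely load-bearing observation is the packaging $\tfrac{L^n}{2n!}+\tfrac{L^{n-1}D}{2\,n!}\le\tfrac{PL^{n-1}}{2n!}$: it is what keeps the auxiliary divisor $D$ --- indispensable for the sharper value $\varepsilon=\tfrac12$ --- from inflating the leading coefficient $\tfrac1{2n!}$, and it is what forces the choice $n_0=n$. The only care then needed is to check, case by case, that a horizontal $D$ with $\deg(D|_C)\in\{1,2\}$ is always compatible with $D+L\le P$, and this is exactly what shapes the $\varepsilon$-dichotomy in the statement.
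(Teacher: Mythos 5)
Your treatment of the regime $\deg(L|_C) < d$ is essentially the paper's Case 1 and is sound: pick the relevant proposition with $n_0 = n$, construct the auxiliary horizontal $D$ from $(P-L)^H$ when it is needed, feed in Proposition \ref{pln-11} to convert the threshold-sum $S$ into a multiple of $PL^{n-1}$, and absorb $\tfrac{L^n}{2n!}+\tfrac{L^{n-1}D}{2n!}\le\tfrac{PL^{n-1}}{2n!}$ using $L+D\le P$; the normalization $D=\tfrac{t}{d-e}(P-L)^H$ is a cosmetic variant of the paper's unnormalized choice $D=(P-L)^H$ but produces the same $\varepsilon$.

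The gap is that the case $\deg(L|_C)=d$ is never handled. You list it in your case split, but then restrict the construction of $D$ to ``when $e<d$'' and never come back to $e=d$. In that case $(P-L)|_C=0$, so no horizontal $D\ge 0$ with $\deg(D|_C)>0$ and $D+L\le P$ exists, and Proposition \ref{h0llnhyper} (which demands such a $D$ in its hypothesis) as well as Propositions \ref{h0llnnonhyper1} and \ref{h0llnnonhyper2} (which require $\deg(L|_C)=0$ and $0<\deg(L|_C)<d$, respectively) are simply inapplicable. Only the subcase ``$C$ hyperelliptic and $L$ not $f$-special'' would survive via Proposition \ref{h0llnhyper1}, leaving the subcases ``$C$ hyperelliptic, $L$ $f$-special, $e=d$'' and ``$C$ non-hyperelliptic, $e=d$'' unproved. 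The paper resolves $\deg(L|_C)=d$ by a dedicated induction on $n$ (its Case 2): the base $n=2$ invokes the relative Noether inequality of \cite{YuanZhang_RelNoether}; for $n\ge 3$ it separates the indices $i_1<\lambda_1$, where $\deg(L_{i_1}|_C)$ is still $d$ and the theorem itself is applied inductively (giving \eqref{i1<lambda1d}), from $i_1\ge\lambda_1$, where the degree has dropped and the $D$-based propositions plus Proposition \ref{pln-11} on $X_1$ apply (giving \eqref{i1>lambda1d1} and \eqref{i1>lambda1d}); the pieces are then assembled using Proposition \ref{numericalinequality'}, which you never invoke. As written, your argument proves the theorem only for divisors $L$ with $\deg(L|_C)<d$, which in particular misses the most important instance $L=P=\hat P$.
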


\begin{proof}
	We divide the proof into two cases with respect to $\deg(L|_C)$.
	
	\textbf{Case 1: $\deg(L|_C) < d$}. The result in this case is a combination of the previous results.
	
	First, let us assume that $C$ is hyperelliptic. As $\deg(L|_C) < d$, we deduce that $\deg\left((P-L)|_C\right) > 0$. Let $D$ be the horizontal part of $P-L$ with respect to $f$. By Proposition \ref{h0llnhyper}, \ref{h0llnhyper1} (taking $n_0 = n$) and \ref{pln-11}, it follows that
	\begin{eqnarray*}
		h^0(X, L) - \frac{PL^{n-1}}{2n!} & \le & h^0(X, L) - \frac{L^n}{2n!} - \frac{L^{n-1}D}{2n!} \\
		& \le & \frac{n-\varepsilon}{n} \sum_{i_1, \cdots, i_{n-1}} a_{i_1} \cdots a_{i_1 \cdots i_{n-1}} + \frac{d}{2} \sum_{m=1}^{n-1} B_m(X, H, \hat{P}) \\
		& \le & \left( \frac{n-\varepsilon}{n} \right) \frac{PL^{n-1}}{d (n-1)!} + \left(\frac{n-\varepsilon}{n} + \frac{d}{2}\right) \sum_{m=1}^{n-1} B_m(X, H, \hat{P})\\
		& \le & \left( \frac{n-\varepsilon}{n! d} \right) PL^{n-1} + \frac{d+2}{2} \sum_{m=1}^{n-1} B_m(X, H, \hat{P}).
	\end{eqnarray*}
	
	Second, assume that $C$ is non-hyperelliptic. If $\deg(L|_C) = 0$, then by Proposition \ref{h0llnnonhyper1} (taking $n_0 = n$) and \ref{pln-11}, the above proof applies here verbatim. If $\deg(L|_C) > 0$, then by Proposition \ref{h0llnnonhyper2} and \ref{pln-11}, similar to the above, we obtain that
	\begin{eqnarray*}
		h^0(X, L) - \frac{PL^{n-1}}{2n!} & \le & h^0(X, L) - \frac{L^n}{2n!} \\
		& \le & \frac{n-1}{n} \sum_{i_1, \cdots, i_{n-1}} a_{i_1} \cdots a_{i_1 \cdots i_{n-1}} + \frac{d}{2} \sum_{m=1}^{n-1} B_m(X, H, \hat{P}) \\
		& \le & \left(\frac{n-1}{n! d}\right) PL^{n-1} + \frac{d+2}{2} \sum_{m=1}^{n-1} B_m(X, H, \hat{P}).
	\end{eqnarray*}
	
	\textbf{Case 2: $\deg(L|_C) = d$}. Notice that in this case $|\hat{P}|_C|$ is base point free. In particular, $d > 1$. The proof in this case is by induction. When $n=2$, the result is implied by \cite[Theorem 1.2]{YuanZhang_RelNoether} which states that
	$$
	h^0(X, L) \le \left(\frac{1}{4} + \frac{1}{2d}\right)L^2 + \frac{d+2}{2}.
	$$
	Since $L^2 \le PL$, the result for $n=2$ is verified. In the following, we assume that Theorem \ref{mainwitherror} for $\deg(L|_C) = d$ is true up to dimension $n-1$.
	
	Suppose that $\dim X = n$. Resume the notation in Section \ref{iteratedfiltration} and Section \ref{horizontalbaseloci}. Then (\ref{rough}) still holds here. That is,
	\begin{eqnarray}
	h^0(X, L) - \frac{L^n}{2n!} & \le & \sum_{i_1} a_{i_1} \left(h^0(X_1, L_{i_1}|_{X_1}) - \frac{(L_{i_1}|_{X_1})^{n-1}}{2(n-1)!} \right) - \frac{(L_0|_{X_1})^{n-1}}{2(n-1)!} \nonumber \\
	& & - \sum_{j=1}^{q} \sum_{i_1 = \lambda_j}^{\lambda_{j+1} - 1} \frac{a_{i_1}(L_i|_{X_1})^{n-2}(Z_{\lambda_j}^H|_{X_1})}{2(n-2)!n}. \label{roughd}
	\end{eqnarray}
	
	Let $\lambda_1$ be the smallest index such that $\deg(Z_{\lambda_1}|_C) > 0$. When $i_1 < \lambda_1$, $\deg(L_{i_1}|_C) = \deg(L|_C) = d$. Thus by the induction assumption, we obtain
	\begin{eqnarray}
	S'_{i_1} & := & h^0(X_1, L_{i_1}|_{X_1}) - \frac{(L_{i_1}|_{X_1})^{n-1}}{2(n-1)!}  \nonumber \\
	& \le &  \frac{(n-2)(L_{i_1}|_{X_1})^{n-1}}{(n-1)!d} + \frac{d+2}{2} \sum_{m=1}^{n-2} B_m(X_1, H_1, \hat{P}_1) \label{i1<lambda1d} \\
	& \le &
	\frac{(L_{i_1}|_{X_1})^{n-1}}{(n-2)!d} + \frac{d+2}{2} \sum_{m=1}^{n-2} B_m(X_1, H_1, \hat{P}_1). \nonumber
	\end{eqnarray}
	Here $H_1$ and $\hat{P}_1$ are the same as those in the proof of Proposition \ref{h0llnhyper}.
	
	When $i_1 \ge \lambda_1$, $\deg(L_{i_1}|_C) < d$. Similar to previous proofs, we know that $\lambda_j \le i_1 \le \lambda_{j+1}-1$ for some $1 \le j \le q$. By Proposition \ref{h0llnhyper},  \ref{h0llnhyper1}, \ref{h0llnnonhyper1} and \ref{h0llnnonhyper2}, we obtain
	\begin{eqnarray}
		S'_{i_1} & := & h^0(X_1, L_{i_1}|_{X_1}) - \frac{(L_{i_1}|_{X_1})^{n-1}}{2(n-1)!} - \frac{(L_{i_1}|_{X_1})^{n-2}(Z^H_{\lambda_j}|_{X_1})}{2(n-2)!n} \nonumber \\
		& \le & \frac{n-1}{n} \sum_{i_2, \cdots, i_{n-1}} a_{i_1i_2} \cdots a_{i_1i_2 \cdots i_{n-1}} + \frac{d}{2} \sum_{m=1}^{n-2} B_m(X_1, H_1, \hat{P}_1). \label{i1>lambda1d1}
	\end{eqnarray}
	The reason why we use $\frac{n-1}{n}$ in (\ref{i1>lambda1d1}) instead of $\frac{n-\varepsilon}{n}$ is the following: if $C$ is hyperelliptic, then as we have seen before, either $L_{i_1}|_{X_1}$ is not $f_0|_{X_1}$-special, or $\deg(Z^H_{\lambda_j}|_C) \ge 2$ for any $1 \le j \le q$. Therefore, we can always take $\varepsilon = 1$ when applying Proposition \ref{h0llnhyper} or \ref{h0llnhyper1}. If $C$ is non-hyperelliptic, then $\deg(Z^H_{\lambda_q}|_C) \ge 2$ so that we can take $\varepsilon = 1$ when applying Proposition \ref{h0llnnonhyper1}. Notice that $\deg(L'_{\lambda_1-1}|_C) = d$ and $\hat{P}_1 \ge L'_{\lambda_1-1}|_{X_1} \ge L_{i_1}|_{X_1}$. By Proposition \ref{pln-11}, we deduce that
	$$
	\frac{(L'_{\lambda_1-1}|_{X_1})(L_{i_1}|_{X_1})^{n-2}}{d(n-2)!} \ge \sum_{i_2, \cdots, i_{n-1}} a_{i_1i_2} \cdots a_{i_1i_2 \cdots i_{n-1}} - \sum_{m=1}^{n-2} B_m(X_1, H_1, \hat{P}_1),
	$$
	which, together with (\ref{i1>lambda1d1}), implies that 
	\begin{equation}
		S'_{i_1} \le \left(\frac{n-1}{n}\right) \frac{(L'_{\lambda_1-1}|_{X_1})(L_{i_1}|_{X_1})^{n-2}}{d(n-2)!} + \frac{d+2}{2} \sum_{m=1}^{n-2} B_m(X_1, H_1, \hat{P}_1). \label{i1>lambda1d}
	\end{equation}
	
	Combine (\ref{roughd}), (\ref{i1<lambda1d}) and (\ref{i1>lambda1d}) together. Apply Proposition \ref{numericalinequality'}. It follows that
	\begin{eqnarray*}
		h^0(X, L) - \frac{L^n}{2n!} & \le & \sum_{i_1} a_{i_1} S'_{i_1} + \frac{(L_0|_{X_1})^{n-1}}{2(n-1)!} \\
		& \le & \frac{n-1}{n} \left(\sum_{i_1 < \lambda_1} a_{i_1} \frac{n(L_{i_1}|_{X_1})^{n-1}}{(n-1)!d} + \sum_{i_1 \ge \lambda_1} a_{i_1} \frac{(L'_{\lambda_1-1}|_{X_1})(L_{i_1}|_{X_1})^{n-2}}{d(n-2)!} \right) \\
		& & + \frac{d+2}{2} \sum_{i_1} a_{i_1} \sum_{m=1}^{n-2} B_m(X_1, H_1, \hat{P}_1) + \frac{(L_0|_{X_1})^{n-1}}{2(n-1)!}  \\
		& \le & \frac{(n-1)L^n}{n!d} + Err'(X, H, L).
	\end{eqnarray*}
	Here
	$$
	Err'(X, H, L) :=\frac{d+2}{2} \sum_{i_1} a_{i_1} \sum_{m=1}^{n-2} B_m(X_1, H_1, \hat{P}_1) + \frac{(L_0|_{X_1})^{n-1}}{2(n-1)!} + \frac{(L_0|_{X_1})^{n-1}}{d(n-2)!}.
	$$
	Using the same technique as in the proof of Proposition \ref{h0llnhyper}, it is easy to check that
	$$
	Err'(X, H, L) \le \frac{d+2}{2} \sum_{m=1}^{n-1} B_m(X, H, \hat{P}).
	$$
	Thus the proof is completed.
\end{proof}

\section{Proof of Theorem \ref{explicitclifford}} \label{proofClifford}

With Theorem \ref{mainwitherror}, we can easily prove Theorem \ref{explicitclifford}.

\begin{theorem} [Theorem \ref{explicitclifford}] \label{explicitestimate}
	Let $f: X \to Y$ be a fibration between two normal varieties $X$ and $Y$ with $\dim X = n \ge 2$ and with general fiber $C$ a smooth curve of genus $g \ge 2$. Fix a base-point-free linear system $|H|$ on $Y$ with $H^{n-1} > 0$. Let $L$ be a nef and numerically $f$-special divisor on $X$ with $d = \deg(L|_C) > 0$.
	\begin{itemize}
		\item [(1)] If $L-f^*H$ is pseudo-effective, then
		$$
		\displaystyle h^0(X, \CO_X(L)) \le \left(\frac{1}{2n!} + \frac{n-\varepsilon}{n! d}\right) L^n + \frac{d+2}{2} \sum_{m=1}^{n-1} B_m(X, H, L).
		$$
		Here $\varepsilon = \frac{1}{2}$ when $d = 1$ or when $C$ is hyperelliptic, $L$ is $f$-special, and $d$ is odd. Otherwise, $\varepsilon = 1$.
		\item [(2)] If $L-f^*H$ is not pseudo-effective, then 
		$$
		h^0(X, \CO_X(L)) \le h^0(X_H, \CO_{X_H}(L)). 
		$$
		Here $X_H = f^*H$ is viewed as a subvariety of $X$.
	\end{itemize}
\end{theorem}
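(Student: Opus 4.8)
The plan is to derive Theorem~\ref{explicitestimate} directly from Theorem~\ref{mainwitherror}, which already proves an estimate of exactly this shape but under auxiliary hypotheses (nef $\QQ$-divisors $\hat P \ge P \ge L$ with $\hat P - f^*H$ pseudo-effective, $\hat P|_C = P|_C$, and $|L|$ base-point-free); I would feed in $\hat P = P = L$ and take for the base-point-free divisor in that statement the moving part of $|L|$. Case (2) I would dispatch first, as it needs no preparation: if $L - f^*H$ is not pseudo-effective then it is not linearly equivalent to an effective divisor, so $h^0(X, \CO_X(L - f^*H)) = 0$; since $f^*H$ is an effective Cartier divisor on $X$ and $\CO_X(L)$ is torsion-free, tensoring the ideal sequence of $X_H = f^*H$ by $\CO_X(L)$ stays exact, and taking global sections of
\[
0 \to \CO_X(L - f^*H) \to \CO_X(L) \to \CO_X(L)|_{X_H} \to 0
\]
yields $h^0(X, \CO_X(L)) \le h^0(X_H, \CO_{X_H}(L))$, which is (2).

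For case (1) I would first note that if $h^0(X,\CO_X(L)) = 0$ the inequality is trivial ($L^n \ge 0$, each $B_m(X,H,L) \ge 0$), so I may assume $h^0 > 0$ and, replacing $L$ by a linearly equivalent effective divisor, that $L \ge 0$. Next, by resolving the singularities of $Y$, base changing (and taking Stein factorization), and resolving $X$ together with the base locus of $|L|$, I may assume $X$ and $Y$ smooth, $f$ as in Section~\ref{setting}, the pulled-back $H$ still base-point-free with $H^{n-1} > 0$, and the moving part $M$ of $|L|$ base-point-free; these operations leave $h^0(X,\CO_X(L))$ and the intersection numbers $L^n$ and $L^{n-i}(f^*H)^i$ unchanged, hence also each $B_m(X,H,L)$ by the explicit formula following Definition~\ref{bmp}. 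Now $\hat P = P = L$ is nef with $L - f^*H$ pseudo-effective and numerically $f$-special, $d = \deg(L|_C) > 0$, and $M \le L$ is nef with $|M|$ base-point-free, so Theorem~\ref{mainwitherror} gives
\[
h^0(X, \CO_X(L)) = h^0(X, M) \le \left(\frac{1}{2n!} + \frac{n-\varepsilon}{n!d}\right) L\, M^{n-1} + \frac{d+2}{2}\sum_{m=1}^{n-1} B_m(X, H, L).
\]
Since $L - M$ is effective and $L$, $M$ are nef, $L\,M^{n-1} \le L^n$ by the elementary intersection inequality recalled in Section~\ref{generalresult}, which turns the above into the asserted bound once the value of $\varepsilon$ is reconciled with the one in the statement.

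That reconciliation is, I expect, the only delicate point, since Theorem~\ref{mainwitherror} decides $\varepsilon$ using $\deg(M|_C)$, which may be strictly smaller than $d$. It returns $\varepsilon = \tfrac12$ precisely when $d = 1$, or when $C$ is hyperelliptic and $M$ is $f$-special with $\deg(M|_C) = d - 1$; otherwise $\varepsilon = 1$. When it returns $\varepsilon = 1$, the inequality obtained is already at least as strong as the one claimed (whose coefficient of $L^n$ is never smaller), so nothing more is needed; when it returns $\varepsilon = \tfrac12$ with $d = 1$, the claimed statement also permits $\varepsilon = \tfrac12$. In the one remaining subcase, $M|_C$ is a base-point-free special divisor on a hyperelliptic curve, hence $M|_C \sim \tfrac{d-1}{2}\, g_2^1$, so $d$ is odd; and since $L|_C \ge M|_C$ with $\deg(L|_C) = d \le 2g - 2$ (numerical $f$-speciality of $L$), a short Riemann--Roch computation on the hyperelliptic curve $C$ shows $h^1(C, L|_C) > 0$, i.e.\ $L$ is $f$-special, so again $\varepsilon = \tfrac12$ is legitimate. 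The main obstacle is therefore not the main line of the argument but exactly this bookkeeping, which guarantees that the constant $\varepsilon$ is never claimed too optimistically.
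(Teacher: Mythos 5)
Your proposal is correct and follows essentially the same route as the paper: dispatch case (2) via the ideal sheaf sequence of $X_H$, and for case (1) pass to a smooth birational model, decompose $|L'|$ into moving part $M$ plus fixed part, apply Theorem~\ref{mainwitherror} with $\hat P = P = L'$ and the base-point-free divisor equal to $M$, then use $L'M^{n-1}\le (L')^n = L^n$ and the invariance of the $B_m$'s under pullback. The one place you are actually more careful than the printed proof is the $\varepsilon$ bookkeeping, where you verify via the $g_2^1$ computation that the condition ``$M$ $f$-special with $\deg(M|_C)=d-1$'' from Theorem~\ref{mainwitherror} forces $d$ odd \emph{and} $h^1(C,L|_C)>0$, i.e.\ $L$ itself $f$-special, exactly as the statement requires.
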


\begin{proof}
	If $L-f^*H$ is not pseudo-effective, then $h^0(X, \CO_X(L-f^*H)) = 0$ and we will obtain the inequality (2).
	
	If $L-f^*H$ is pseudo-effective, in order to apply Theorem \ref{mainwitherror}, we consider the following diagram:
	$$
	\xymatrix{
		X' \ar[r]^{\pi_X}  \ar[d]_{f'} & X \ar[d]^f \\
		Y' \ar[r]^{\pi_Y} & Y 
	}
	$$
	where $\pi_X: X' \to X$ and $\pi_Y: Y' \to Y$ are respectively resolution of singularities of $X$ and $Y$, and $f'$ is the induced fibration by $f$. Let $L' = \pi^*_X L$ and $H'= \pi^*_Y H$. For simplicity, we still use $C$ to denote a general fiber of $C$. Notice that here $L'$ is a $\QQ$-divisor in general. Replacing $X'$ by an appropriate blowing up, we may assume that 
	$$
	L' = M + Z,
	$$
	where $Z \ge 0$, $|M|$ is base point free, and $h^0(X', \rounddown{L'}) = h^0(X', M)$.
	
	It is easy to see that 
	\begin{itemize}
		\item $L'-f'^*H'$ is pseudo-effective;
		\item $L'$ is numerically $f'$-special with $d = \deg(L'|_C) > 0$.
	\end{itemize}   
	Thus by Theorem \ref{mainwitherror}, we obtain
	$$
	h^0(X', M) \le \left(\frac{1}{2n!} + \frac{n-\varepsilon}{n! d}\right) L'M^{n-1} + \frac{d+2}{2} \sum_{m=1}^{n-1} B_m(X', H', L').
	$$
	Here $\varepsilon = \frac{1}{2}$ when (i): $d = 1$ or (ii): $C$ is hyperelliptic and $\deg (M|_C) = d - 1$. Otherwise $\varepsilon = 1$. In fact, (ii) implies that $d$ is odd.

	To finish the whole proof, notice that we have $h^0(X', M) = h^0(X, \CO_X(L))$, $L'M^{n-1} \le (L')^n = L^n$, and 
	\begin{eqnarray*}
		B_m(X', H', L') & = & 2^{n-2} \frac{(L')^n}{\deg(L'|_C)} \frac{(L')^{n-m} (f'^*H')^m}{(L')^{n-m+1}(f'^*H')^{m-1}} \\
		& = & 2^{n-2} \frac{L^n}{d} \frac{L^{n-m} (f^*H)^m}{L^{n-m+1}(f^*H)^{m-1}} \\
		& = & B_m(X, H, L) 
	\end{eqnarray*}
    by Definition \ref{bmp}. Hence the proof is completed.
\end{proof}

\section{Continuous rank over abelian varieties}

In this section, we study some basic properties of the continuous rank of divisors and coherent sheaves.

Suppose that $a: X \to A$ is a nontrivial morphism from a normal variety $X$ to an abelian variety $A$. For any coherent sheaf $\CE$ on $X$, similar to Definition \ref{continuousrank}, we define
$$
h^0_a(X, \CE)  := \min \{h^0(X, \CE \otimes a^* \alpha) |  \alpha \in \Pic^0(A)\}.
$$
For any divisor $L$ on $X$, we have
$$
h^0_a(X, \CO_X(L)) := \min \{h^0 (X, \CO_X(L) \otimes a^* \alpha) |  \alpha \in \Pic^0(A)\}
$$
by Definition \ref{continuousrank}. It is very easy to deduce from the above definition that for any $\alpha \in \Pic^0(A)$, we always have
$$
h^0_a(X, \CE) = h^0_a(X, \CE \otimes a^* \alpha).
$$
Moreover, if $L \ge L'$, then
$$
h^0_a(X, \CO_X(L)) \ge h^0_a(X, \CO_X(L')).
$$

For any integer $k > 0$, denote by $[k]: A \to A$ the multiplication map of $A$ by $k$. Write $X_{[k]} : = X \times_{[k]} A$. We have the following commutative diagram:
$$
\xymatrix{
	X_{[k]} \ar[r]^{\phi_{[k]}}  \ar[d]_{a_{[k]}} & X \ar[d]^a \\
	A \ar[r]^{[k]} & A 
	}
$$
Notice that if $a(X)$ generates $A$, then $X_{[k]}$ is irreducible.

We have the following proposition.

\begin{prop} \label{limith0}
	Suppose that $a: X \to A$ is a nontrivial morphism from a normal variety $X$ to an abelian variety $A$ of dimension $m$ such that $a(X)$ generates $A$. Then for any coherent sheaf $\CE$ on $X$,
	$$
	h^0_a(X, \CE) = \lim\limits_{k \to \infty} \frac{h^0(X_{[k]}, \phi_{[k]}^* \CE)}{k^{2m}}.
	$$
\end{prop}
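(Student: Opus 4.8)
The plan is to rewrite $h^0(X_{[k]},\phi_{[k]}^*\CE)$ as a finite sum of twisted cohomology groups on $X$ and then analyse the asymptotics of that sum. Since $[k]\colon A\to A$ is an isogeny and we work in characteristic zero, it is a Galois \'etale cover of degree $k^{2m}$ with abelian Galois group $A[k]=\ker[k]$, and one has the standard eigensheaf decomposition
$$
[k]_*\CO_A\;\cong\;\bigoplus_{\beta\in\Pic^0(A)[k]}\beta ,
$$
the sum running over the $k^{2m}$ line bundles $\beta\in\Pic^0(A)$ with $[k]^*\beta\cong\CO_A$. As $[k]$ is affine, base change along $a$ gives $\phi_{[k]*}\CO_{X_{[k]}}\cong a^*\bigl([k]_*\CO_A\bigr)\cong\bigoplus_{\beta}a^*\beta$; since $\phi_{[k]}$ is finite (hence affine) and flat, the projection formula then yields
$$
h^0(X_{[k]},\phi_{[k]}^*\CE)\;=\;h^0\!\bigl(X,\CE\otimes\phi_{[k]*}\CO_{X_{[k]}}\bigr)\;=\;\sum_{\beta\in\Pic^0(A)[k]}h^0\!\bigl(X,\CE\otimes a^*\beta\bigr).
$$

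Every line bundle $\beta$ occurring here lies in $\Pic^0(A)$, so by the definition of $h^0_a$ each summand is at least $h^0_a(X,\CE)$; as there are $k^{2m}$ summands, $k^{-2m}h^0(X_{[k]},\phi_{[k]}^*\CE)\ge h^0_a(X,\CE)$ for every $k$. For the reverse estimate I would use that $\alpha\mapsto h^0(X,\CE\otimes a^*\alpha)$ is upper semicontinuous on $\Pic^0(A)$: apply the semicontinuity theorem to the sheaf on $X\times\Pic^0(A)$ obtained from $\CE$ by pullback and twist by $(a\times\mathrm{id})^*\mathcal P$, where $\mathcal P$ is a Poincar\'e bundle; this sheaf is flat over $\Pic^0(A)$ and the projection $X\times\Pic^0(A)\to\Pic^0(A)$ is proper. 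On the projective variety $\Pic^0(A)$ the function therefore attains its minimum $h^0_a(X,\CE)$ on a nonempty Zariski-open set $U$ and is bounded above by its maximum $M<\infty$. Setting $V=\Pic^0(A)\setminus U$, a proper closed subvariety, and splitting the sum over $\Pic^0(A)[k]$ according to membership in $U$ or $V$, we get
$$
h^0(X_{[k]},\phi_{[k]}^*\CE)\;\le\;\bigl(k^{2m}-r_k\bigr)\,h^0_a(X,\CE)+r_k M,\qquad r_k:=\#\bigl(V\cap\Pic^0(A)[k]\bigr),
$$
hence
$$
h^0_a(X,\CE)\;\le\;\frac{h^0(X_{[k]},\phi_{[k]}^*\CE)}{k^{2m}}\;\le\;h^0_a(X,\CE)+\bigl(M-h^0_a(X,\CE)\bigr)\frac{r_k}{k^{2m}}.
$$

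What remains, and what I expect to be the only genuinely nontrivial point, is to show $r_k=o(k^{2m})$. I would prove or quote the standard fact that torsion points become asymptotically negligible in any proper closed subvariety of an abelian variety; precisely, $\#\bigl(V\cap\Pic^0(A)[k]\bigr)=O(k^{2\dim V})$, which suffices since $\dim V<m$. This is used in exactly this fashion in Pardini's proof of the Severi inequality \cite{Pardini_Severi}. One reduces to $V$ irreducible and, after replacing $\Pic^0(A)$ by its quotient by the connected stabiliser of $V$, to the case of a subvariety with finite stabiliser (each nonempty fibre of the induced map on $k$-torsion having a fixed power of $k$ elements), where a degree estimate against a fixed symmetric ample class $M$, using $[k]^*M\equiv k^2M$, gives the bound. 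Granting this and letting $k\to\infty$ in the displayed two-sided estimate, the squeeze gives $\lim_{k\to\infty}k^{-2m}h^0(X_{[k]},\phi_{[k]}^*\CE)=h^0_a(X,\CE)$, as claimed. Finally, irreducibility of $X_{[k]}$ — guaranteed here by the assumption that $a(X)$ generates $A$ — is needed only so that the left-hand side has its usual meaning; the cohomological identity above is valid regardless.
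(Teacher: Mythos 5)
Your proof is correct and follows essentially the same route as the paper's: both reduce, via the projection formula, to the identity $h^0(X_{[k]},\phi_{[k]}^*\CE)=\sum_{\alpha\in\Pic^0(A)[k]}h^0(X,\CE\otimes a^*\alpha)$, then invoke semicontinuity to place the exceptional torsion points in a proper closed subvariety and conclude by the standard density-zero estimate for torsion points in such a subvariety. You are somewhat more explicit about the affine base change step and the counting lemma $r_k=o(k^{2m})$, which the paper treats tersely, but the underlying argument is the same.
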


\begin{proof}
	By the projection formula, we know that for any $k > 0$,
	$$
	h^0(X_{[k]}, \phi_{[k]}^* \CE) = h^0(X, (\phi_{[k]})_*\phi_{[k]}^* \CE) = \sum_{\alpha \in T_k} h^0(X, \CE \otimes a^* \alpha),
	$$
	where $T_k$ is the set of all $k$-torsion elements in $\Pic^0(A)$. Consider the following subset of $T_k$:
	$$
	S_k:= \{\alpha \in T_k | h^0(X, \CE \otimes a^*\alpha) > h^0_a(X, \CE) \}.
	$$
	By the semi-continuity theorem, all these $S_k$ lie in a proper subvariety of $\Pic^0(A)$, which implies that
	$$
	\lim\limits_{k \to \infty} \frac{\# S_k}{\# T_k} = 0.
	$$
	As a result, we deduce that
	$$
	\lim\limits_{k \to \infty} \frac{h^0(X_{[k]}, \phi_{[k]}^* \CE)}{k^{2m}} = \lim\limits_{k \to \infty} \frac{\sum_{\alpha \in T_k} h^0(X, \CE \otimes a^* \alpha)}{\# T_k} = h^0_a (X, \CE).
	$$
	Thus the proof is completed.
\end{proof}

From the above proposition, we obtain the following result which says that the continuous rank behaves well under \'etale covers.
\begin{coro} \label{h0acomparison}
	Under the same assumption as Proposition \ref{limith0}, for any $k > 0$, we have
	$$
	h^0_{a_{[k]}} (X_{[k]}, \phi_{[k]}^* \CE) = k^{2m} h^0_a(X, \CE).
	$$
\end{coro}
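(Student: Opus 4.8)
The plan is to deduce this from Proposition \ref{limith0} by applying that proposition twice: once to the sheaf $\CE$ on $X$, and once to the pulled-back sheaf $\phi_{[k]}^*\CE$ on $X_{[k]}$. First I would observe that $a_{[k]}\colon X_{[k]}\to A$ is again a nontrivial morphism whose image generates $A$ (this is immediate because $a_{[k]}$ composed with $[k]$ equals $a\circ\phi_{[k]}$, and $[k]$ is surjective with finite kernel, so the image of $a_{[k]}$ cannot be contained in a proper abelian subvariety once $a(X)$ generates $A$; alternatively, $X_{[k]}$ is irreducible by the remark preceding the corollary and the composition $X_{[k]}\to A\xrightarrow{[k]}A$ has image generating $A$). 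Hence Proposition \ref{limith0} applies to the pair $(X_{[k]},a_{[k]})$ and the sheaf $\phi_{[k]}^*\CE$.

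The key identity to exploit is the compatibility of the fiber-product construction with iterated multiplication maps: for any integer $\ell>0$ one has a canonical isomorphism
$$
(X_{[k]})_{[\ell]} \;=\; X_{[k]}\times_{[\ell]}A \;\cong\; X\times_{[k\ell]}A \;=\; X_{[k\ell]},
$$
compatible with the projections to $X$ and to $A$, because $[k]\circ[\ell]=[k\ell]$ on $A$. Under this identification, the map $\phi_{[\ell]}\colon (X_{[k]})_{[\ell]}\to X_{[k]}$ followed by $\phi_{[k]}\colon X_{[k]}\to X$ becomes $\phi_{[k\ell]}\colon X_{[k\ell]}\to X$, so that the pulled-back sheaf $\phi_{[\ell]}^*(\phi_{[k]}^*\CE)$ is identified with $\phi_{[k\ell]}^*\CE$. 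Therefore
$$
h^0\bigl((X_{[k]})_{[\ell]},\,\phi_{[\ell]}^*\phi_{[k]}^*\CE\bigr) \;=\; h^0\bigl(X_{[k\ell]},\,\phi_{[k\ell]}^*\CE\bigr).
$$

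Now I would simply take limits. Applying Proposition \ref{limith0} to $(X_{[k]},a_{[k]},\phi_{[k]}^*\CE)$ gives
$$
h^0_{a_{[k]}}\bigl(X_{[k]},\phi_{[k]}^*\CE\bigr)
=\lim_{\ell\to\infty}\frac{h^0\bigl((X_{[k]})_{[\ell]},\phi_{[\ell]}^*\phi_{[k]}^*\CE\bigr)}{\ell^{2m}}
=\lim_{\ell\to\infty}\frac{h^0\bigl(X_{[k\ell]},\phi_{[k\ell]}^*\CE\bigr)}{\ell^{2m}}
=k^{2m}\lim_{\ell\to\infty}\frac{h^0\bigl(X_{[k\ell]},\phi_{[k\ell]}^*\CE\bigr)}{(k\ell)^{2m}}.
$$
Since the sequence $j\mapsto h^0(X_{[j]},\phi_{[j]}^*\CE)/j^{2m}$ converges to $h^0_a(X,\CE)$ by Proposition \ref{limith0}, its subsequence along $j=k\ell$ ($\ell\to\infty$) converges to the same limit, and the right-hand side equals $k^{2m}h^0_a(X,\CE)$, as desired. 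The only point requiring a little care — and the step I would flag as the main (minor) obstacle — is justifying the canonical isomorphism $(X_{[k]})_{[\ell]}\cong X_{[k\ell]}$ together with its compatibility with the $\phi$-maps and with irreducibility/normality of the spaces involved, so that Proposition \ref{limith0} is genuinely applicable at each stage; this is a routine fiber-product manipulation but must be stated cleanly.
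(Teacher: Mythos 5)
Your proposal is correct and follows essentially the same route as the paper's proof: both apply Proposition \ref{limith0} to the pair $(X_{[k]},a_{[k]})$ and identify the resulting sequence $h^0(X_{[k\ell]},\phi_{[k\ell]}^*\CE)/\ell^{2m}$ as $k^{2m}$ times a subsequence of the sequence converging to $h^0_a(X,\CE)$. The paper's version is simply terser, leaving the verifications that $a_{[k]}(X_{[k]})$ generates $A$ and that $(X_{[k]})_{[\ell]}\cong X_{[k\ell]}$ implicit, whereas you spell them out.
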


\begin{proof}
	By Proposition \ref{limith0}, for any fixed $k>0$, we have
	\begin{eqnarray*}
		h^0_{a_{[k]}} (X_{[k]}, \phi_{[k]}^* \CE) & = & \lim\limits_{l \to \infty} \frac{h^0(X_{[lk]}, \phi_{[lk]}^* \CE)}{l^{2m}} \\
		& = & k^{2m} \lim\limits_{l \to \infty} \frac{h^0(X_{[lk]}, \phi_{[lk]}^* \CE)}{(lk)^{2m}} = k^{2m} h^0_a(X, \CE).
	\end{eqnarray*}
	Thus the result holds.
\end{proof}

\section{Proof of main results} \label{proofmain}
In this section, we prove the main results of this paper.

\subsection{Relative Clifford inequality}
Let $f: X \to Y$ be a fibration between normal varieties $X$ and $Y$ with general fiber $C$ a smooth curve of genus $g \ge 2$. Assume that $\dim X = n \ge 2$. Let $b: Y \to A$ be a nontrivial map from $Y$ to an abelian variety. Then we have the following commutative diagram:
$$
\xymatrix{
	X \ar[rd]^a \ar[d]_f & \\
	Y \ar[r]^b & A}
$$
We have the following easy lemma.
\begin{lemma} \label{h0fh0a}
	For any coherent sheaf $\CE$ on $X$,
	$$
	h^0_f(X, \CE) \le h^0_a(X, \CE).
	$$
	The equality holds when $a$ is the Albanese map of $Y$.
\end{lemma}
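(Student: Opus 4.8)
The plan is to unwind the two definitions and observe that the minimum defining $h^0_f(X,\CE)$ is taken over a larger index set than the one defining $h^0_a(X,\CE)$, where $h^0_f$ of a coherent sheaf is defined by the same recipe as in Definition \ref{continuousrank}. Since $a=b\circ f$, functoriality of pullback gives $a^*\alpha=f^*(b^*\alpha)$ for every $\alpha\in\Pic^0(A)$. Writing $\Lambda:=b^*\!\left(\Pic^0(A)\right)\subseteq\Pic^0(Y)$ for the image of the induced homomorphism $b^*$, the family of twists $\{\CE\otimes a^*\alpha:\alpha\in\Pic^0(A)\}$ is therefore the same as $\{\CE\otimes f^*\beta:\beta\in\Lambda\}$, which is a subfamily of $\{\CE\otimes f^*\beta:\beta\in\Pic^0(Y)\}$.

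The inequality then follows immediately. Directly from the definitions,
$$
h^0_f(X,\CE)=\min_{\beta\in\Pic^0(Y)}h^0(X,\CE\otimes f^*\beta)\ \le\ \min_{\beta\in\Lambda}h^0(X,\CE\otimes f^*\beta),
$$
and the right-hand side equals $\min_{\alpha\in\Pic^0(A)}h^0(X,\CE\otimes a^*\alpha)=h^0_a(X,\CE)$, simply because a minimum over a subset can only be larger. No semicontinuity input is needed here — it is a purely set-theoretic comparison of two minima.

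For the equality assertion I would take $b=\alb_Y$, so that $A$ is the Albanese variety of $Y$, and invoke the standard fact that $\alb_Y$ induces an isomorphism $\alb_Y^*:\Pic^0(A)\to\Pic^0(Y)$ — equivalently, that $A$ and $\Pic^0(Y)$ are dual abelian varieties, with pullback realizing the identification on degree-zero classes. Then $\Lambda=\Pic^0(Y)$, the two index sets coincide, and the two minima agree. The only genuinely delicate point is this identification of $\Pic^0$ for the possibly singular normal variety $Y$ with $\Pic^0$ of its Albanese; everything else is formal. To avoid any subtlety about $\Pic^0$ of singular varieties one may work throughout on a fixed resolution of $Y$, which changes none of the groups $H^0(X,\CE\otimes f^*\beta)$ involved.
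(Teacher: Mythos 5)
Your proof is correct and follows essentially the same route as the paper: both arguments rest on the observation that, via $a^*\alpha = f^*(b^*\alpha)$, the $a$-twists form a subfamily of the $f$-twists, so the minimum defining $h^0_f$ is taken over a larger set, and for the equality case both invoke the identification $\Pic^0(A)=\Pic^0(Y)$ when $b=\alb_Y$. Your remark on the subtlety of $\Pic^0$ for a possibly singular normal $Y$ is a reasonable extra caution, but it does not change the substance of the argument.
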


\begin{proof}
	Pick any $\alpha \in \Pic^0(A)$ such that $h^0_a(X, \CE) = h^0(X, \CE \otimes a^* \alpha)$. Then $\beta :=b^* \alpha \in \Pic^0(Y)$ and
	$$
	h^0_f(X, \CE) \le h^0(X, \CE \otimes f^*\beta) = h^0(X, \CE \otimes a^* \alpha).
	$$
	Thus the proof is completed. When $a$ is the Albanese map of $Y$, $\Pic^0(A) = \Pic^0(Y)$. Thus the equality follows simply from the definition.
\end{proof}

\begin{theorem} [Theorem \ref{main}] \label{clifford} 
	Suppose that $Y$ is of maximal Albanese dimension. Let $L$ be a nef and numerically $f$-special divisor on $X$ with $d = \deg(L|_C) > 0$. Then
	$$
	h^0_f(X, \CO_X(L)) \le \left(\frac{1}{2n!} + \frac{n-\varepsilon}{n!d}\right) L^n.
	$$
	Here $\varepsilon = \frac{1}{2}$ when $d = 1$ or when $C$ is hyperelliptic, $L$ is $f$-special and $d$ is odd. Otherwise, $\varepsilon = 1$.
\end{theorem}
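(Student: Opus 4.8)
The plan is to derive Theorem~\ref{main} from the Clifford inequality over an arbitrary base (Theorem~\ref{explicitclifford}) by an induction on $n=\dim X$, combined with the limiting argument of Pardini over the Albanese variety of $Y$. Set $b=\alb_Y\colon Y\to A$ with $A=\mathrm{Alb}(Y)$ and $q=\dim A$; maximal Albanese dimension means that $b$ is generically finite onto an $(n-1)$-dimensional image, which moreover generates $A$ (so $q\ge n-1$), and $a:=b\circ f$ satisfies $h^0_f(X,\CO_X(L))=h^0_a(X,\CO_X(L))$ by Lemma~\ref{h0fh0a}. Fix a very ample divisor $\Theta$ on $A$. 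For $k>0$ form the finite \'etale cover $\phi_{[k]}\colon X_{[k]}=X\times_{[k]}A\to X$ of degree $k^{2q}$, together with the induced $f_{[k]}\colon X_{[k]}\to Y_{[k]}$, $a_{[k]}\colon X_{[k]}\to A$, $b_{[k]}\colon Y_{[k]}\to A$, for which $a_{[k]}=b_{[k]}\circ f_{[k]}$ and $a\circ\phi_{[k]}=[k]\circ a_{[k]}$; since $a(X)=b(Y)$ generates $A$, both $X_{[k]}$ and $Y_{[k]}$ are irreducible normal, and the general fiber of $f_{[k]}$ is again $C$. Put $H_k=b_{[k]}^*\Theta$, which is base point free on $Y_{[k]}$ with $H_k^{n-1}>0$. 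By Corollary~\ref{h0acomparison}, $h^0_{a_{[k]}}(X_{[k]},\phi_{[k]}^*\CO_X(L))=k^{2q}h^0_a(X,\CO_X(L))$; twisting $\phi_{[k]}^*L$ by a minimizing $\alpha_k\in\Pic^0(A)$ produces a divisor $L_k\equiv\phi_{[k]}^*L$ on $X_{[k]}$ with $h^0(X_{[k]},\CO_{X_{[k]}}(L_k))=k^{2q}h^0_a(X,\CO_X(L))$, and $L_k$ is nef, numerically $f_{[k]}$-special with $\deg(L_k|_C)=d$, and $f_{[k]}$-special exactly when $L$ is $f$-special, so the value of $\varepsilon$ is preserved.

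Apply Theorem~\ref{explicitclifford} to $(f_{[k]},L_k,|H_k|)$. From $L_k\equiv\phi_{[k]}^*L$ and $f_{[k]}^*H_k=a_{[k]}^*\Theta\equiv\tfrac1{k^2}\phi_{[k]}^*a^*\Theta$ one gets, for $0\le j\le n$,
$$
L_k^{\,n-j}(f_{[k]}^*H_k)^j\ \equiv\ k^{2q-2j}\,L^{n-j}(a^*\Theta)^j,\qquad\text{in particular}\quad L_k^n=k^{2q}L^n .
$$
In case (1)---which holds, for instance, whenever $L$ is big and $k\gg 0$, since then $L-\tfrac1{k^2}a^*\Theta$ is pseudo-effective---Theorem~\ref{explicitclifford} yields
$$
h^0(X_{[k]},\CO_{X_{[k]}}(L_k))\ \le\ \Bigl(\tfrac1{2n!}+\tfrac{n-\varepsilon}{n!d}\Bigr)L_k^n+\tfrac{d+2}{2}\sum_{m=1}^{n-1}B_m(X_{[k]},H_k,L_k).
$$
I claim each $B_m(X_{[k]},H_k,L_k)=O(k^{2q-2})$. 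If $\vol(L)=L^n>0$, this follows from Definition~\ref{bmp} and the displayed relation once one knows $L^{n-m+1}(a^*\Theta)^{m-1}>0$; this positivity holds because the restriction of the big nef divisor $L$ to the preimage under $f$ of a general complete intersection of members of $|b^*\Theta|$ is again big and nef. If $\vol(L)=0$, then being in case (1) is equivalent to $L-\tfrac1{k^2}a^*\Theta$ being pseudo-effective, and intersecting this class in turn with the nef classes $L^{n-1-j}(a^*\Theta)^j$ forces $L^{n-j}(a^*\Theta)^j=0$ for all $j$; hence $L_k^n=0$ and all $B_m(X_{[k]},H_k,L_k)$ vanish. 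Dividing the displayed inequality by $k^{2q}$ and letting $k\to\infty$ gives $h^0_a(X,\CO_X(L))\le\bigl(\tfrac1{2n!}+\tfrac{n-\varepsilon}{n!d}\bigr)L^n$.

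In case (2), $h^0(X_{[k]},\CO_{X_{[k]}}(L_k))\le h^0(W_k,\CO_{W_k}(L_k|_{W_k}))$ with $W_k=f_{[k]}^*H_k$ a general member of $|f_{[k]}^*H_k|$, equal to $f_{[k]}^{-1}(H_k')$ for a general $H_k'\in|H_k|$. If $n=2$, then $W_k$ is a disjoint union of $\deg H_k=O(k^{2q-2})$ copies of $C$, so $h^0(W_k,\CO_{W_k}(L_k|_{W_k}))=O(k^{2q-2})$; if $n\ge 3$, Bertini's theorem makes $W_k$ and $H_k'$ normal, $\dim H_k'=n-2$, and $H_k'$ again of maximal Albanese dimension, so the fibration of curves $f_{[k]}|_{W_k}\colon W_k\to H_k'$ falls under the induction hypothesis, whence $h^0(W_k,\CO_{W_k}(L_k|_{W_k}))\le(\mathrm{const})\cdot(L_k|_{W_k})^{n-1}=(\mathrm{const})\cdot L_k^{n-1}(f_{[k]}^*H_k)=O(k^{2q-2})$. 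In both cases $h^0_a(X,\CO_X(L))=O(k^{-2})$; since whenever $L$ is not big one has $L^n=\vol(L)=0$, so the assertion of Theorem~\ref{main} is simply $h^0_f(X,\CO_X(L))=0$, and since $L$ big places us in case (1) for $k\gg 0$, the induction closes. The base case $n=2$ is obtained by the same argument, using the identity $B_1(X,H,P)=1$ valid for $n=2$.

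The step I expect to be the main obstacle is the asymptotic control of the error terms: one must show that $\sum_m B_m(X_{[k]},H_k,L_k)=o(k^{2q})$, and this becomes transparent only after recognizing that case (1) of Theorem~\ref{explicitclifford} for $(f_{[k]},L_k,|H_k|)$ is equivalent to the pseudo-effectivity of $L-\tfrac1{k^2}a^*\Theta$ on $X$---a fact that simultaneously annihilates all error when $\vol(L)=0$ and, when $L$ is big, yields the positivity of the intersection numbers $L^{n-m+1}(a^*\Theta)^{m-1}$ occurring in the denominators of the $B_m$. A secondary technical point is the Bertini-type verification in case (2) that the section $H_k'\subset Y_{[k]}$ is again of maximal Albanese dimension, which is exactly what legitimizes the inductive step.
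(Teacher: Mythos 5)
Your Case (1) argument follows the paper closely: the same Pardini tower $X_{[k]}\to X$ over $\mathrm{Alb}(Y)$, an application of Theorem~\ref{explicitclifford}, the same scaling computation $L_k^{n-j}(f_{[k]}^*H_k)^j = k^{2q-2j}L^{n-j}(a^*\Theta)^j$ showing $B_m(X_{[k]},H_k,L_k)=O(k^{2q-2})$, and then divide by $k^{2q}$ and let $k\to\infty$. This part is correct. Two small remarks: you do not need the side discussion about the $\vol(L)=0$ subcase of Case~(1) --- in fact the cascade $L^{n-j}(a^*\Theta)^j=0$ eventually reaches $j=n-1$ and contradicts $L(a^*\Theta)^{n-1}=d\cdot\Theta^{n-1}\deg(b)>0$, so Case~(1) with $\vol(L)=0$ is simply empty (this is also what Proposition~\ref{ph>0} says). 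Saying instead that ``all $B_m$ vanish'' is misleading since the $B_m$ would then be $0/0$.

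The genuine gap is in Case~(2) for $n\ge 3$. You write $h^0(X_{[k]},\CO(L_k))\le h^0(W_k,\CO(L_k|_{W_k}))$ and then invoke the inductive hypothesis (Theorem~\ref{clifford} in dimension $n-1$) to conclude $h^0(W_k,\CO_{W_k}(L_k|_{W_k}))\le(\mathrm{const})\cdot(L_k|_{W_k})^{n-1}$. But Theorem~\ref{clifford} bounds the \emph{continuous rank} $h^0_{f_{[k]}|_{W_k}}(W_k,\CO_{W_k}(L_k|_{W_k}))$, which is a \emph{minimum} over twists by $\Pic^0(H_k')$, so it only gives $h^0_{f_{[k]}|_{W_k}}(W_k,\ldots)\le h^0(W_k,\ldots)$ together with the asserted bound on the left-hand side --- the wrong direction. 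The twist $\alpha_k$ you chose is a minimizer for $X_{[k]}$, not for $W_k$; and even a simultaneously generic $\alpha_k\in\Pic^0(A)$ does not obviously land in the good locus inside $\Pic^0(H_k')$, because $\Pic^0(A)\to\Pic^0(H_k')$ need not be dominant. So the inductive step as written does not close.

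The paper avoids this entirely and does not use a dimension induction in Case~(2). Instead, it replaces $L_{[k]}$ by $L'_{[k]}:=L_{[k]}+f_{[k]}^*H_{[k]}$, observes that $L'_{[k]}-f_{[k]}^*H_{[k]}$ is effective (so $L'_{[k]}$ falls under Case~(1), with the same $d$ and the same (numerical) $f$-speciality since $f_{[k]}^*H_{[k]}$ is vertical), runs the limiting argument of Case~(1) for $L'_{[k]}$ on the tower above $X_{[k]}$, and then divides by $k^{2\dim A}$ to get
$$
h^0_a(X,\CO_X(L)) \le \Bigl(\tfrac{1}{2n!}+\tfrac{n-\varepsilon}{n!d}\Bigr)\Bigl(L+\tfrac{1}{k^2}a^*\Theta\Bigr)^n,
$$
and finally lets $k\to\infty$. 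This keeps everything at the level of continuous ranks and never needs to control $h^0$ of a restriction. If you want to salvage your approach, the cleanest fix is to replace your Case~(2) induction with this shift-by-$f^*_{[k]}H_{[k]}$ device.
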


\begin{proof}
	Let $b:=\alb_Y$ be the Albanese map of $Y$. Then we have the following diagram
	$$
	\xymatrix{
		X \ar[rd]^a \ar[d]_f & \\
		Y \ar[r]^b & A}
	$$
	with $A$ the Albanese variety of $Y$ and $\dim A \ge n-1$. Since $f$ is surjective, $a(X)$ generates $A$. Hence by Lemma \ref{h0fh0a}, it suffices to prove that 
	$$
	h^0_a(X, \CO_X(L)) \le \left(\frac{1}{2n!} + \frac{n-\varepsilon}{n!d}\right) L^n.
	$$
	
	Without loss of the generality, we assume that $h^0_a(X, \CO_X(L)) > 0$. It means that $h^0(X, \CO_X(L) \otimes a^* \alpha) > 0$ for some $\alpha \in \Pic^0(A)$. Replacing $\CO_X(L)$ by $\CO(L) \otimes a^*\alpha$, we may assume that $L \ge 0$. 
	
	For any $k > 0$, consider the following commutative diagram
	$$
	\xymatrix{
		X_{[k]} \ar[r]^{\phi_{[k]}} \ar[d]_{f_{[k]}} 
		\ar@/_2.5pc/[dd]_{a_{[k]}} & X \ar[d]^f  \ar@/^2.5pc/[dd]^a \\
		Y_{[k]} \ar[r]^{\psi_{[k]}} \ar[d]_{b_{[k]}} & Y \ar[d]^b \\
		A \ar[r]^{[k]} & A}
	$$
	Here $Y_{[k]} = Y \times_{[k]} A$ and $X_{[k]} = X \times_{[k]} A$, and they are both irreducible. Notice that here $\deg \phi_{[k]} = \deg \psi_{[k]} = \deg [k] = k^{2 \dim A}$. Write
	$$
	L_{[k]} = \phi_{[k]}^* L.
	$$
	
	Fix a sufficiently ample divisor $G$ on $A$. Write
	$$
	H = b^*G \quad \mbox{and} \quad H_{[k]} = b^*_{[k]} G.
	$$
	We may assume that each $|H_{[k]}|$ is base point free. The ampleness of $G$ guarantees that $H_{[k]}^{n-1} > 0$.
	Notice that by \cite[Chapter 2, Proposition 3.5]{Birkenhake_Lange_AbelianVar}, we have
	\begin{equation}
		k^2 H_{[k]} \sim_{\mathrm{num}} \psi_{[k]}^* H. \label{numericalequivalence}
	\end{equation}
	
	If $L - f^*H$ is pseudo-effective, then $L_{[k]} - f_{[k]}^* H_{[k]}$ is also pseudo-effective. Moreover, for any $1 \le m \le n-1$, we deduce from (\ref{numericalequivalence}) that
	$$
	L_{[k]}^{n-m} (f_{[k]}^*H_{[k]})^{m} = k^{-2m} L_{[k]}^{n-m} ((\psi_{[k]} \circ f_{[k]})^*H)^{m} = k^{2 \dim A -2m}L^{n-m}(f^*H)^m.
	$$
	This implies that
	$$ 
	\frac{L_{[k]}^n}{d} \frac{L_{[k]}^{n-m}(f_{[k]}^*H_{[k]})^{m}}{L_{[k]}^{n-m+1}(f_{[k]}^*H_{[k]})^{m-1}} = k^{2\dim A-2} \frac{L^n}{d} \frac{L^{n-m}(f^*H)^m}{L^{n-m+1}(f^*H)^{m-1}}.
	$$
	Apply Theorem \ref{explicitestimate} for $(X_{[k]}, H_{[k]}, L_{[k]})$. It follows from the above equality that 
	\begin{eqnarray*}
		h^0(X_{[k]}, \CO_{X_{[k]}}(L_{[k]})) & \le & \left(\frac{1}{2n!} + \frac{n-\varepsilon}{n!d}\right) L_{[k]}^n + \\ 
		& & k^{2\dim A-2} \frac{2^{n-3}(d+2) L^n}{d} \sum_{m=1}^{n-1} \frac{L^{n-m}(f^*H)^m}{L^{n-m+1}(f^*H)^{m-1}}
	\end{eqnarray*}
	which implies that
	$$
	\frac{h^0(X_{[k]}, \CO_{X_{[k]}}(L_{[k]}))}{k^{2 \dim A}} \le \left(\frac{1}{2n!} + \frac{n-\varepsilon}{n!d}\right) L^n + o(1).
	$$
	Take the limit as $k \to \infty$. Then the proof in this case is completed.
	
	Suppose that $L - f^*H$ is not pseudo-effective. Fix an integer $k>0$ and denote $L'_{[k]} = L_{[k]} + f_{[k]}^*H_{[k]}$. Then $L'_{[k]} - f_{[k]}^*H_{[k]}$ is effective. Using the above argument verbatim, we deduce that
	$$
	h^0_{a_{[k]}}(X_{[k]}, \CO_{X_{[k]}}(L_{[k]} + f_{[k]}^*H_{[k]})) \le \left(\frac{1}{2n!} + \frac{n-\varepsilon}{n!d}\right) (L_{[k]} + f_{[k]}^*H_{[k]})^n.
	$$
	Notice that by Proposition \ref{h0acomparison},
	$$
	h^0_a(X, \CO_X(L)) = \frac{h^0_{a_{[k]}}(X_{[k]}, \CO_{X_{[k]}} (L_{[k]}))}{k^{2 \dim A}} \le \frac{h^0_{a_{[k]}}(X_{[k]}, \CO_{X_{[k]}} (L_{[k]} + f_{[k]}^*H_{[k]}))}{k^{2 \dim A}}.
	$$
	Combine the above results, and it follows from (\ref{numericalequivalence}) that
	$$
	h^0_a(X, \CO_X(L)) \le \left(\frac{1}{2n!} + \frac{n-\varepsilon}{n!d}\right) \frac{(L_{[k]} + f_{[k]}^*H_{[k]})^n}{k^{2 \dim A}} = \left(\frac{1}{2n!} + \frac{n-\varepsilon}{n!d}\right) \left(L + \frac{f^*H}{k^2}\right)^n.
	$$
	Thus the result follows after taking $k \to \infty$.
\end{proof}

\subsection{Volume inequality}
Let $X$ be an $n$-dimensional minimal variety of general type. Assume that the Albanese map $\alb_X: X \to A$ induces a fibration $f: X \to Y$ of curves of genus $g \ge 2$ with $Y$ normal of dimension $n-1$. 

\begin{theorem} [Theorem \ref{main2}] \label{volume}
	Under the above assumption, we have
	$$
	K_X^n \ge 2 n! \left(\frac{g-1}{g+n-2}\right)  \chi(X, \omega_X).
	$$
\end{theorem}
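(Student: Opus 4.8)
The plan is to apply the relative Clifford inequality of Theorem~\ref{main} to $L=K_X$ and then bound the $f$-continuous rank $h^0_f(X,\omega_X)$ from below by $\chi(X,\omega_X)$ using generic vanishing. First I would verify the hypotheses of Theorem~\ref{main}. Since $X$ is minimal, $K_X$ is a nef $\QQ$-Cartier Weil divisor. A general fibre $C$ of $f$ avoids $\mathrm{Sing}(X)$: indeed $X$ has terminal singularities, so $\mathrm{codim}\,\mathrm{Sing}(X)\ge 3$, while $f$ has one-dimensional fibres over the $(n-1)$-dimensional base $Y$, so $f(\mathrm{Sing}(X))$ is a proper subvariety of $Y$. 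Hence $C$ is a smooth curve with trivial normal bundle in $X$, and adjunction gives $K_X|_C=K_C$; thus $K_X$ is numerically $f$-special with $d=\deg(K_X|_C)=2g-2>0$. Moreover $Y$ is of maximal Albanese dimension: writing $\alb_X\colon X\to A$ and taking its Stein factorization $X\xrightarrow{f}Y\xrightarrow{g}A$, the morphism $g$ is finite onto its image, which therefore has dimension $\dim Y=n-1$, so $\alb_Y$ is generically finite. Therefore Theorem~\ref{main} applies, and since $d=2g-2\ge 2$ is even the exceptional value $\varepsilon=\tfrac12$ is never triggered, so $\varepsilon=1$ and
$$
h^0_f(X,\omega_X)=h^0_f(X,\CO_X(K_X))\le\left(\frac{1}{2n!}+\frac{n-1}{n!(2g-2)}\right)K_X^n=\frac{g+n-2}{2n!(g-1)}\,K_X^n.
$$

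Next I would establish $h^0_f(X,\omega_X)\ge\chi(X,\omega_X)$. The preliminary point is that $\mathrm{Alb}(X)=\mathrm{Alb}(Y)$: the factorization $\alb_X=g\circ f$ splits the surjection $\mathrm{Alb}(X)\twoheadrightarrow\mathrm{Alb}(Y)$, so it is an isomorphism, and hence $f^*\colon\Pic^0(Y)\to\Pic^0(X)$ is surjective. Consequently, by Lemma~\ref{h0fh0a} applied with $a=\alb_Y\circ f$,
$$
h^0_f(X,\omega_X)=\min_{\beta\in\Pic^0(X)}h^0(X,\omega_X\otimes\beta),
$$
and by upper semicontinuity this minimum is attained for $\beta$ general. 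Passing to a resolution $\mu\colon X'\to X$ (which leaves the Albanese map unchanged) and using that $X$ has canonical, hence rational, singularities, one has $\mu_*\omega_{X'}=\omega_X$ and $R^i\mu_*\omega_{X'}=0$ for $i>0$ by Grauert--Riemenschneider; therefore $\chi(X',\omega_{X'}\otimes\mu^*\beta)=\chi(X,\omega_X\otimes\beta)=\chi(X,\omega_X)$ and $h^i(X,\omega_X\otimes\beta)=h^i(X',\omega_{X'}\otimes\mu^*\beta)$ for all $i$ and all $\beta\in\Pic^0(X)$. Since $\dim\alb_X(X)=\dim Y=n-1$, the generic vanishing theorem of Green--Lazarsfeld gives $h^i(X',\omega_{X'}\otimes\beta')=0$ for $i\ge 2$ and $\beta'$ general. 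Combining, for $\beta$ general we obtain
$$
\chi(X,\omega_X)=h^0(X,\omega_X\otimes\beta)-h^1(X,\omega_X\otimes\beta)\le h^0(X,\omega_X\otimes\beta)=h^0_f(X,\omega_X).
$$

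Finally I would chain the two inequalities to get $\chi(X,\omega_X)\le\tfrac{g+n-2}{2n!(g-1)}K_X^n$, i.e.
$$
K_X^n\ge 2n!\left(\frac{g-1}{g+n-2}\right)\chi(X,\omega_X),
$$
and observe that $\tfrac{g-1}{g+n-2}\ge\tfrac12$ precisely when $g\ge n$, which gives the stated consequence $K_X^n\ge n!\,\chi(X,\omega_X)$ in that range (and when $\chi(X,\omega_X)\le 0$ the inequality is trivial since $K_X^n>0$). Almost all of the difficulty has been packed into Theorem~\ref{main}; among the remaining steps the one requiring the most care is the lower bound $h^0_f(X,\omega_X)\ge\chi(X,\omega_X)$ — specifically, making sure that the minimum defining the $f$-continuous rank ranges over \emph{all} of $\Pic^0(X)$ so that the Green--Lazarsfeld estimate (which is naturally phrased over $\Pic^0$ of the variety) can be applied, which is exactly where the identification $\mathrm{Alb}(X)=\mathrm{Alb}(Y)$ furnished by the Stein factorization enters.
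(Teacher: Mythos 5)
Your proof is correct and follows the same approach as the paper: apply Theorem~\ref{main} with $L = K_X$, then bound $h^0_f(X,\omega_X)$ from below by $\chi(X,\omega_X)$ via the Green--Lazarsfeld generic vanishing theorem. You are somewhat more careful than the paper's exposition in two places it treats as implicit --- verifying that a general fibre $C$ lies in the smooth locus so that $K_X|_C = K_C$ and $d=2g-2$ make sense, and establishing the identification $\mathrm{Alb}(X)\cong\mathrm{Alb}(Y)$ (equivalently, that $f^*\colon\Pic^0(Y)\to\Pic^0(X)$ is surjective) which is what lets the $f$-continuous rank be computed as a minimum over all of $\Pic^0(X)$ so that generic vanishing applies.
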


\begin{proof}
	Notice that $Y$ is of maximal Albanese dimension.
	Since $K_X$ is nef and $K_X|_C = K_C$, by Theorem \ref{clifford} (taking $\varepsilon = 1$ as $\deg K_C$ is always even), we obtain
	$$
	K_X^n \ge 2 n! \left(\frac{g-1}{g+n-2}\right)  h^0_{\alb_X}(X, \omega_X).
	$$
	
	On the other hand, it is known that $\omega_X$ has the generic vanishing property. More precisely, in this case, by the generic vanishing theorem  \cite{Green_Lazarsfeld_GenericVanishing} of Green-Lazarsfeld for varieties with rational singularities (see also \cite{Hacon_Kovacs_Generic_Vanishing}), for any general member $\alpha \in \Pic^0(X) = \Pic^0(A)$, 
	$$
	H^i(X, \omega_X \otimes \alpha) = 0, \quad \forall i \ge 2.
	$$
	Thus we deduce that for the above $\alpha$,
	\begin{eqnarray*}
		\chi(X, \omega_X) & = & \chi(X, \omega_X \otimes \alpha) = h^0(X, \omega_X \otimes \alpha) - h^1(X, \omega_X \otimes \alpha) \\
		& \le & h^0(X, \omega_X \otimes \alpha) = h^0_{\alb_X}(X, \omega_X).
	\end{eqnarray*}
	Therefore, the proof is completed.
\end{proof}

\subsection{Slope inequality}
In this subsection, we prove the slope inequality.
\begin{theorem} [Theorem \ref{main3}] 
	Let $f: X \to Y$ be a semi-stable fibration of curves from a normal variety $X$ of dimension $n \ge 2$ to a smooth variety $Y$ of dimension $n-1$ with a general fiber $C$ of genus $g \ge 2$. Suppose that $K_{X/Y}$ is nef. Then the inequality
	$$
	K_{X/Y}^n \ge 2 n! \left(\frac{g-1}{g+n-2}\right) \ch_{n-1}(f_* \omega_{X/Y})
	$$
	holds if there is a finite morphism $Z \to Y$ where $Z$ is either an abelian variety or a smooth toric Fano variety.
	
	Moreover, if $Y$ itself is an abelian variety, then the semi-stability assumption on $f$ can be dropped.
\end{theorem}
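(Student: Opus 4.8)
\smallskip

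The plan is to reduce, by a finite base change, to the two base cases in which $Y$ is itself an abelian variety or a smooth toric Fano variety. If $\pi\colon Z\to Y$ is the given finite morphism and $f_Z\colon X_Z\to Z$ is the base change of $f$ along $\pi$, with $h\colon X_Z\to X$ the projection, then $K_{X_Z/Z}=h^*K_{X/Y}$ is nef, $K_{X_Z/Z}^n=(\deg\pi)\,K_{X/Y}^n$, and, since $\pi$ is finite flat between smooth varieties, $f_{Z*}\omega_{X_Z/Z}=\pi^*f_*\omega_{X/Y}$, so that $\ch_{n-1}(f_{Z*}\omega_{X_Z/Z})=(\deg\pi)\,\ch_{n-1}(f_*\omega_{X/Y})$; moreover $f_Z$ is again a fibration by genus-$g$ curves, and when $f$ is semi-stable so is $f_Z$ (the fibres are unchanged and the local monodromies remain unipotent; one normalises $X_Z$ if necessary, which does not affect the two invariants above). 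Hence proving the inequality over $Z$ and dividing by $\deg\pi$ yields it over $Y$, and it suffices to treat $Y$ abelian or toric Fano.

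\smallskip

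For $Y$ an abelian variety one has $\omega_Y=\CO_Y$, so $K_{X/Y}=K_X$ and $f_*\omega_{X/Y}=f_*\omega_X=:\CE$; also $Y=\mathrm{Alb}(Y)$ is of maximal Albanese dimension, so Theorem \ref{main} applies to $L=K_X$ (with $d=2g-2$, hence $\varepsilon=1$) and gives $h^0_f(X,\omega_X)\le\frac{g+n-2}{2n!(g-1)}\,K_X^n$. It then remains to identify $h^0_f(X,\omega_X)$ with $\ch_{n-1}(\CE)$. On an abelian variety $\mathrm{td}(T_Y)=1$, so $\chi(Y,\CE\otimes\alpha)=\ch_{n-1}(\CE)$ for every $\alpha\in\Pic^0(Y)$; by the generic vanishing theorem \cite{Green_Lazarsfeld_GenericVanishing} the sheaf $\CE$ is a $GV$-sheaf, so $h^i(Y,\CE\otimes\alpha)=0$ for $i>0$ and $\alpha$ general, whence $h^0(X,\omega_X\otimes f^*\alpha)=h^0(Y,\CE\otimes\alpha)=\ch_{n-1}(\CE)$ for general $\alpha$; by upper semicontinuity this generic value equals the minimum, i.e. $h^0_f(X,\omega_X)=\ch_{n-1}(\CE)$. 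Substituting gives the theorem in this case, and in particular re-proves $\ch_{n-1}(f_*\omega_{X/Y})\ge0$.

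\smallskip

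For $Y$ a smooth toric Fano variety I would run a limiting argument over the tower of polarised covers coming from the self-maps $\delta_k\colon Y\to Y$ induced by multiplication by $k$ on the cocharacter lattice: $\delta_k$ is finite of degree $k^{n-1}$, and $\delta_k^*$ acts as multiplication by $k$ on $\Pic(Y)\otimes\QQ$, hence (since $H^\bullet(Y;\QQ)$ is generated in degree two) as multiplication by $k^j$ on $H^{2j}(Y;\QQ)$. Let $f_{(k)}\colon X_{(k)}=X\times_{Y,\delta_k}Y\to Y$ be the base change, $\phi_{(k)}\colon X_{(k)}\to X$ the projection, $L_{(k)}=K_{X_{(k)}/Y}=\phi_{(k)}^*K_{X/Y}$ (nef, numerically $f_{(k)}$-special with $d=2g-2$, so $\varepsilon=1$), and $\CE_{(k)}=f_{(k)*}\omega_{X_{(k)}/Y}=\delta_k^*\CE$. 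Fix a torus-invariant ample (hence base-point-free) divisor $H_0$ on $Y$; from $\delta_k\circ f_{(k)}=f\circ\phi_{(k)}$ and $\delta_k^*H_0=kH_0$ one gets $f_{(k)}^*H_0\equiv\tfrac1k\,\phi_{(k)}^*f^*H_0$, whence $L_{(k)}^n=k^{n-1}K_{X/Y}^n$ and $B_m(X_{(k)},H_0,L_{(k)})=k^{n-2}B_m(X,H_0,K_{X/Y})$. Applying Theorem \ref{explicitclifford} to $(X_{(k)},H_0,L_{(k)})$ — in case (2) replacing $L_{(k)}$ by $L_{(k)}+f_{(k)}^*H_0$ and passing to the limit exactly as in the proof of Theorem \ref{main} — and invoking Kollár's vanishing theorem \cite{Kollar_Higher} (applied to a resolution of $X_{(k)}$, whose singularities are toric, hence rational; since $Y$ is Fano, $-K_Y$ is ample, so $H^i(Y,\CE_{(k)})=0$ for $i>0$), together with Hirzebruch--Riemann--Roch, one obtains
\[
\frac{h^0(X_{(k)},L_{(k)})}{k^{n-1}}=\frac{\chi(Y,\CE_{(k)})}{k^{n-1}}=\ch_{n-1}(\CE)+O(k^{-1}),\qquad
\frac{h^0(X_{(k)},L_{(k)})}{k^{n-1}}\le\frac{g+n-2}{2n!(g-1)}\,K_{X/Y}^n+O(k^{-1}).
\]
Letting $k\to\infty$ yields $K_{X/Y}^n\ge 2n!\,\tfrac{g-1}{g+n-2}\,\ch_{n-1}(f_*\omega_{X/Y})$ (and again $\ch_{n-1}\ge0$).

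\smallskip

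The main obstacle is the toric Fano case, and specifically the balancing behind $B_m(X_{(k)},H_0,L_{(k)})/L_{(k)}^n=O(1/k)$: the error terms of Theorem \ref{explicitclifford} are killed in the limit precisely because the polarised self-map $\delta_k$ dilates divisor classes by $k$ while having degree $k^{n-1}$ — an arbitrary finite self-map would not suffice, and dropping the Fano hypothesis destroys the vanishing $H^{>0}(Y,\CE_{(k)})=0$ that turns $h^0$ into $\chi$. A secondary but genuine point is the base-change bookkeeping: one must check that $f_{(k)}$ (resp. $f_Z$) remains semi-stable with $\omega_{\bullet/Y}$ a line bundle, and that resolving $X_{(k)}$ to apply Kollár's theorem affects neither $h^0(X_{(k)},L_{(k)})$ nor $\chi(Y,\CE_{(k)})$; both follow from the rationality of the singularities of a semi-stable total space but deserve to be spelled out. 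Everything else is the standard limiting machinery already used for Theorem \ref{main}.
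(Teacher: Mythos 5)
Your proof is correct and follows essentially the same route as the paper: base change to reduce to $Z=Y$, apply Theorem~\ref{main} directly over an abelian base (using the $GV$ property of $f_*\omega_X$ plus Hirzebruch--Riemann--Roch with trivial Todd class), and run a limiting argument over a tower of toric self-covers combined with Koll\'ar vanishing and the fact that the cohomology of a smooth complete toric variety is generated by divisor classes. Your only real variant is using the toric multiplication-by-$k$ endomorphism $\delta_k$ directly, which makes the scaling $\delta_k^*=k^j$ on $H^{2j}$ an exact identity; the paper instead iterates a Fakhruddin polarized endomorphism $\mu$ and estimates the lower-degree HRR terms with effective-divisor comparisons --- substantively the same computation. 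One small imprecision: the rationality of the singularities of $X_{(k)}$ and $X_Z$ should be credited to Viehweg's result on semi-stable fibrations over smooth bases (which also supplies the flatness and normality you invoke), not to the singularities being ``toric''; the conclusion you draw is nonetheless the right one.
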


\begin{proof}
	Since $f$ is semi-stable and $Y$ is smooth, by \cite[Proposition 2]{Viehweg_Rational_Singularity}, $f$ is flat. Consider the following base change
	$$
	\xymatrix{
		X' \ar[r]^{\tau} \ar[d]_{f'} & X \ar[d]^{f} \\
		Z \ar[r]^{\pi} & Y
	}
	$$
	where $\pi: Z \to Y$ is as in the assumption and $X' = X \times_Y Z$. Apply \cite[Proposition 2]{Viehweg_Rational_Singularity} again. We deduce that $f'$ is also flat and semi-stable, and $X'$ is normal with rational singularities. By \cite[Theorem 3.5.1, 3.6.1]{Conrad_Duality}, we have
	$$
	\tau^* \omega_{X/Y} = \omega_{X'/Z}.
	$$
	We also have $\omega_{X'/Z} = \CO_{X'}(K_{X'/Z})$. Thus $K_{X'/Z}$ is nef. The key fact is that 
	$$
	K_{X'/Z}^{n} = \deg(\pi) K_{X/Y}^n \quad \mbox{and} \quad \ch_{n-1}(f'_* \omega_{X'/Z}) = \deg(\pi) \ch_{n-1}(f_* \omega_{X/Y}).
	$$
	It is now clear that to prove the theorem, it suffices to assume that $Z=Y$ and $f'=f$. In the following, we will assume that $Y$ is an abelian variety or a smooth toric Fano variety.
	
	\textbf{Case 1}. First, suppose that $Y$ is an abelian variety. Then $K_{X/Y} = K_X$ and $\omega_{X/Y} = \omega_X$. By Theorem \ref{clifford}, we obtain
	$$
	K_X^n \ge 2 n! \left(\frac{g-1}{g+n-2}\right)  h^0_f(X, \omega_X) = 2 n! \left(\frac{g-1}{g+n-2}\right)  h^0(Y, f_* \omega_X \otimes \alpha),
	$$
	where $\alpha$ is a general member in $\Pic^0(Y)$. On the other hand, it is know that $f_* \omega_X$ is a generic vanishing sheaf on $Y$ (see \cite{Hacon_GenericVanishing} for instance). In particular,
	$$
	H^i(Y, f_* \omega_X \otimes \alpha) = 0, \quad \forall i \ge 1.
	$$
	Therefore, combine with the Hirzebruch-Riemann-Roch theorem on abelian varieties, and we deduce that
	$$
	h^0(Y, f_* \omega_X \otimes \alpha) = \chi(Y, f_* \omega_X \otimes \alpha) = \chi(Y, f_* \omega_X) = \ch_{n-1}(f_* \omega_X).
	$$
	The proof is completed.
	
	Moreover, if the variety $Y$ in the theorem is already an abelian variety, then we no longer need to make the base change as in the beginning of the proof. Instead, we just apply Theorem \ref{clifford} directly together with the Hirzebruch-Riemann-Roch theorem to deduce the slope inequality as above. Notice that the flatness and the semi-stability are not required in the case.
	
	\textbf{Case 2}. From now on, suppose that $Y$ is a smooth toric Fano variety. Then it is well-known that $Y$ admits a nontrivial \emph{polarized endomorphism} (see \cite{Fakhruddin} for more details). To be more precise, there is a finite morphism $\mu: Y \to Y$ and an ample divisor $H$ on $Y$ such that $\mu^*H \equiv qH$ where $q>1$ is an integer.
	
	For any integer $k>0$, denote by $\mu_k: Y \to Y$ the $k^\mathrm{th}$ iteration of $\mu$. Consider the following commutative diagram:
	$$
	\xymatrix{
		X_{(k)} \ar[r]^{\nu_k} \ar[d]_{f_{(k)}} & X \ar[d]^{f} \\
		Y \ar[r]^{\mu_k} & Y
	}
	$$
	where $X_{(k)} = X \times_{\mu_k} Y$. Similar to the proof at the beginning, for any $k$, $X_{(k)}$ are normal with rational singularities. We still have
	$$
	K_{X_{(k)}/Y}^{n} = \deg(\mu_k) K_{X/Y}^n.
	$$
	
	By taking tensor powers, we may assume that $H$ is very ample. We apply Theorem \ref{explicitestimate} to the triple $(X_{(k)}, H, K_{X_{(k)}/Y})$.  Using a similar limiting argument as in the proof of Theorem \ref{clifford}, we deduce that
	$$
	\frac{h^0(X_{(k)}, \omega_{X_{(k)}/Y})}{\deg(\mu_k)} \le \left(\frac{1}{2n!} + \frac{n-1}{2(g-1)n!}\right) K_{X/Y}^n + o(1)
	$$
	when $k$ is sufficiently large. Therefore, to finish the proof, we only need to prove that 
	$$
	\frac{h^0(X_{(k)}, \omega_{X_{(k)}/Y})}{\deg(\mu_k)} = \frac{\ch_{n-1}(f_{(k)*} \omega_{X_{(k)}/Y})}{\deg(\mu_k)} + o(1).
	$$
	
	In fact, since $\omega_Y^{-1}$ is ample, by the vanishing theorem  of Koll\'ar \cite[Theorem 2.1]{Kollar_Higher} and the fact that $X_{(k)}$ has at worst rational singularities, we deduce that
	$$
	H^i(Y, f_{(k)*} \omega_{X_{(k)}/Y} ) = H^i(Y, f_{(k)*} \omega_{X_{(k)}} \otimes \omega_{Y}^{-1}) = 0, \quad \forall i > 0.
	$$
	Thus it follows from the Hirzebruch-Riemann-Roch theorem that
	$$
	h^0(X_{(k)}, \omega_{X_{(k)}/Y}) = \ch_{n-1}(f_{(k)*} \omega_{X_{(k)}/Y}) + \sum_{j=1}^{n-1} \ch_{n-1-j}(f_{(k)*} \omega_{X_{(k)}/Y}) \frac{T_j}{j!},
	$$
	where $T_j$ is the $j^{\mathrm{th}}$ Todd class of $Y$. Then we only need to prove that
	$$
	\frac{\ch_{n-1-j}(f_{(k)*} \omega_{X_{(k)}/Y}) T_j}{\deg(\mu_k)} \sim o(1)
	$$
	for any $j \ge 1$ when $k$ is sufficiently large. Recall that the Chow ring of smooth toric varieties is generated by divisor classes (see \cite[\S 12.5]{Cox_Toric} for details).  As cycle classes, we may view both $\ch_{n-1-j} (f_{(k)*}\omega_{X_{(k)}/Y})$ and $T_j$ as linear combinations of complete intersections of divisors, thus linear combinations of complete intersections of very ample divisors on $Y$. Notice that we still have
	$$
	\quad \ch_{n-1}(f_{(k)*} \omega_{X_{(k)}/Y}) = \deg(\mu_k) \ch_{n-1}(f_* \omega_{X/Y})
	$$
	here. To prove the above estimate, it is enough to prove that for any $n-1$ very ample divisors $A_1$, \ldots, $A_{n-1}$ on $Y$, 
	$$
	\frac{A_1 \cdots A_j (\mu^*_k A_{j+1}) \cdots (\mu^*_kA_{n-1})}{\deg(\mu_k)} \sim o(1).
	$$
	This is easy to check. By taking tensor powers, we may assume that $H-A_i$ is effective for $1 \le i \le j$. Then 
	\begin{eqnarray*}
		A_1 \cdots A_j (\mu^*_k A_{j+1}) \cdots (\mu^*_kA_{n-1}) & \le & H^j (\mu^*_k A_{j+1}) \cdots (\mu^*_kA_{n-1}) \\
		& = & \frac{1}{q^{kj}} (\mu_k^*H)^j (\mu^*_k A_{j+1}) \cdots (\mu^*_kA_{n-1}) \\
		& = & \frac{\deg(\mu_k)}{q^{kj}} H^jA_{j+1} \cdots A_{n-1}.
	\end{eqnarray*}
    Thus the estimate is proved and then the whole proof is completed.
\end{proof}

\section{Examples and further remarks} \label{examples}

In a communication to us, Hu \cite{Hu2016} provided an interesting construction of a $3$-fold $X$ of general type with $\vol(K_X) = \frac{26}{5} \chi(X, \omega_X)$ whose Albanese map is a fibration of curves of genus $2$ over an abelian surface. In this section (except Section \ref{unbounded}), we provide a general construction completely based on Hu's idea in dimension three. Among others, all examples in this section show the following:
\begin{enumerate}
	\item To guarantee the inequality that $K_X^n \ge n! \chi(X, \omega_X)$ in Theorem \ref{main2}, the genus $g$ of the Albanese fiber cannot be too small. In particular, if $n = 3$, then $g$ cannot be two.
	
	\item In Theorem \ref{main}, the number $\varepsilon$ cannot equal one always, and the nefness assumption on $L$ is crucial.
	
	\item The inequality $L^n \ge 2(n-1)! h^0_f(X, \CO_X(L))$ in Corollary \ref{2n-1!} is sharp.
	
	\item The ratio $K^n_X/\chi(\omega_X)$ is unbounded from above.
\end{enumerate}
Throughout this section, we assume that $g, n \ge 2$ are integers.

\subsection{The construction} \label{constructionhu} Let $A$ be an abelian variety of dimension $n-1$. Fix a very ample divisor $D$ on $A$. Let $Y = \PP(\CO_A \oplus \CO_A(-2D))$ be the $\PP^1$-bundle over $A$, and let $H$ be the effective hyperplane divisor associated to $\CO_Y(1)$. We denote by $p: Y \to A$ the canonical projection.
	
Notice that the linear system $|H+2p^*D|$ is base point free, and we have $\CO_H(H+2 p^*D) = \CO_H$. By Bertini's theorem, we can choose a smooth divisor $H' \in |(2g+1)(H+2p^*D)|$ such that $H'$ and $H$ have no intersection with each other. Let $\pi: X \to Y$ be the double cover branched along $H$ and $H'$. Then the induced map 
$f= p \circ \pi: X \to A$ is a fibration of curves of genus $g$, because a general fiber of $f$ is a double cover of $\PP^1$ ramified along $(2g+2)$ points. From the double cover formula, we have
\begin{eqnarray}
	K_X & \equiv & \pi^*\left( K_Y+(g+1)H + (2g+1)p^*D \right) \label{KX} \\
	& \equiv & \pi^*\left((g-1)H + (2g-1)p^*D \right). \nonumber
\end{eqnarray}
Here $\equiv$ means linear equivalence. Therefore, $K_X$ is ample. Since the branch locus is smooth, we conclude that $X$ is smooth.

This construction will be used throughout this section. 

\subsection{Canonical invariants} In the following, we compute the canonical invariants of $X$.

\begin{lemma} \label{chikn}
	In the above example, we have
	\begin{eqnarray*}
		\chi(X, \omega_X) & = & \frac{D^{n-1}}{(n-1)!} \sum_{i=1}^{g} (2i-1)^{n-1}, \\
		K_X^n & = & \left( (2g-1)^n - 1 \right)D^{n-1}.
	\end{eqnarray*}
\end{lemma}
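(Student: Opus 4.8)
The plan is to push every computation down to the $\PP^1$-bundle $Y=\PP(\CO_A\oplus\CO_A(-2D))$ and then to the abelian base $A$, using the standard double-cover formulas for $\pi\colon X\to Y$.

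First I would record the intersection theory on $Y$. Writing $\CE=\CO_A\oplus\CO_A(-2D)$, so that $p_*\CO_Y(H)=\CE$ with $c_1(\CE)=-2D$ and $c_2(\CE)=0$, the Grothendieck relation in $\mathrm{CH}^*(Y)$ becomes $H^2=-2\,p^*D\cdot H$, i.e. $H\cdot(H+2p^*D)=0$. This is consistent with — and fixes the sign convention for $\PP(\CE)$ via — the identity $\CO_H(H+2p^*D)=\CO_H$ recorded in the construction. Combining it with $(p^*D)^n=0$ (since $\dim A=n-1$) and the projection formula $p_*(H\cdot p^*\gamma)=\gamma$ gives $H^a\cdot(p^*D)^{n-a}=(-2)^{a-1}D^{n-1}$ for $1\le a\le n$, the $a=0$ product being zero. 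For $K_X^n$: by \eqref{KX} one has $K_X=\pi^*\big((g-1)H+(2g-1)p^*D\big)$ and $\deg\pi=2$, so $K_X^n=2\big((g-1)H+(2g-1)p^*D\big)^n$ is evaluated on $Y$. Expanding by the binomial theorem and substituting the intersection numbers above, all terms share a common factor and the sum completes to a full binomial expansion:
\[
\sum_{a=0}^{n}\binom{n}{a}\big(-2(g-1)\big)^{a}(2g-1)^{n-a}=\big(2g-1-2(g-1)\big)^{n}=1 .
\]
Subtracting the excluded $a=0$ term $(2g-1)^n$ yields $\big((g-1)H+(2g-1)p^*D\big)^n=\tfrac12\big((2g-1)^n-1\big)D^{n-1}$, hence $K_X^n=\big((2g-1)^n-1\big)D^{n-1}$.

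For the Euler characteristic I would use $\chi(X,\omega_X)=(-1)^n\chi(X,\CO_X)$ together with $\pi_*\CO_X=\CO_Y\oplus\CO_Y(-L_0)$, where $L_0:=(g+1)H+(2g+1)p^*D$, so that $\chi(X,\CO_X)=\chi(Y,\CO_Y)+\chi(Y,\CO_Y(-L_0))$. The first summand is $\chi(A,\CO_A)=0$ because $A$ is abelian. For the second I would pass to $A$ via Leray: $p_*\CO_Y(-L_0)=0$, and relative Serre duality on the $\PP^1$-bundle, $R^1p_*\CO_Y(-mH)\cong\big(p_*(\CO_Y((m-2)H)\otimes p^*\det\CE)\big)^\vee$, gives $R^1p_*\CO_Y(-(g+1)H)=S^{g-1}\CE^\vee\otimes(\det\CE)^{-1}=\bigoplus_{j=1}^{g}\CO_A(2jD)$; twisting by $\CO_A(-(2g+1)D)$ this becomes $R^1p_*\CO_Y(-L_0)=\bigoplus_{i=1}^{g}\CO_A(-(2i-1)D)$. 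Using $\chi(A,\CO_A(E))=E^{n-1}/(n-1)!$ on the abelian variety $A$, one gets $\chi(Y,\CO_Y(-L_0))=-\sum_{i=1}^{g}\chi(A,\CO_A(-(2i-1)D))=(-1)^n\frac{D^{n-1}}{(n-1)!}\sum_{i=1}^{g}(2i-1)^{n-1}$, and the $(-1)^n$ from Serre duality cancels, leaving $\chi(X,\omega_X)=\frac{D^{n-1}}{(n-1)!}\sum_{i=1}^{g}(2i-1)^{n-1}$.

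The computation is essentially routine once set up; the main obstacle is bookkeeping of conventions — the grading and sign convention for $\PP(\CE)$ (Grothendieck versus classical projectivization), which must be chosen so that $p_*\CO_Y(H)=\CE$ and the tautological relation matches $\CO_H(H+2p^*D)=\CO_H$, and the precise twists picked up through the double cover and through the relative duality isomorphism for $R^1p_*$. Once these are pinned down, the remaining work is the binomial identity displayed above and an application of Riemann–Roch on $A$.
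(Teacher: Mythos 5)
Your proof is correct, and the overall strategy — push everything from $X$ down to $Y$ via the double-cover decomposition and then from $Y$ down to $A$, using the tautological relation $H\cdot(H+2p^*D)=0$ to reduce intersection numbers to $D^{n-1}$ and a binomial identity to collapse the sum — is the same as the paper's. The one place you take a slightly longer route is in the $\chi$-computation: you pass through $\chi(X,\CO_X)$, decompose $\pi_*\CO_X=\CO_Y\oplus\CO_Y(-L_0)$, and then invoke relative Serre duality to identify $R^1p_*\CO_Y(-L_0)$ with $\bigoplus_{i=1}^{g}\CO_A(-(2i-1)D)$ before applying Riemann--Roch on $A$; the paper instead decomposes $\pi_*\omega_X=\omega_Y\oplus\CO_Y((g-1)H+(2g-1)p^*D)$ and pushes the positive summand forward directly as $S^{g-1}\CE\otimes\CO_A((2g-1)D)=\bigoplus_{i=1}^{g}\CO_A((2i-1)D)$, avoiding any appeal to duality on the fibers of $p$. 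Both are standard and yield the identical answer; the paper's version is a little shorter and less sensitive to the sign/grading conventions you flag. For $K_X^n$ the two computations are equivalent up to which variable is eliminated via the tautological relation (you substitute $H^a(p^*D)^{n-a}=(-2)^{a-1}D^{n-1}$, the paper substitutes $p^*D\mapsto-\tfrac12 H$ on the $H$-part), and the binomial telescoping is the same.
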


\begin{proof}
	We first compute $\chi(X, \omega_X)$. By the projection formula, we have
	\begin{eqnarray*}
		\pi_* \omega_X & = & \omega_Y \oplus \left( \omega_Y \otimes \CO_Y \left( (g+1)H + (2g+1) p^*D \right)\right) \\
		& = & \omega_Y \oplus \CO_Y \left((g-1)H + (2g-1)p^*D \right).
	\end{eqnarray*}
	It is easy to see that 
	$$
	\chi(Y, \omega_Y) = (-1)^n \chi(Y, \CO_Y) = (-1)^n \chi(A, \CO_A) = 0.
	$$
	Therefore, it follows that
	\begin{eqnarray*}
		\chi(X, \omega_X) & = & \chi \left(Y, \CO_Y \left((g-1)H + (2g-1)p^*D \right)\right) \\
		& = & \chi \left(A, \mathrm{Sym}^{g-1} \left(\CO_A \oplus \CO_A(-2D) \right) \otimes \CO_A((2g-1)D) \right) \\
		& = & \sum_{i=1}^{g} \chi(A, \CO_A((2i-1)D)) \\
		& = & \frac{D^{n-1}}{(n-1)!} \sum_{i=1}^{g} (2i-1)^{n-1}.
	\end{eqnarray*}
	Hence the equality for $\chi(X, \omega_X)$ is verified. 
	
	To compute $K_X^n$, from \eqref{KX}, we have
	$$
	K_X^n = 2 \left((g-1)H + (2g-1)p^*D\right)^n.
	$$
	Using the fact that $\CO_H(H) = \CO_H(-2p^*D)$ and $H^{n} = (-2D)^{n-1}$, we deduce that
	\begin{eqnarray*}
		\left((g-1)H + (2g-1)p^*D\right)^n & = & \sum_{k=0}^{n} {n \choose k} \left( (g-1) H\right)^{n-k} \left((2g-1)p^*D\right)^k \\
		& = & \sum_{k=0}^{n-1} {n \choose k} \left( (g-1) H\right)^{n-k} \left(- \left(g-\frac{1}{2}\right) H\right)^k \\
		& = & \frac{(-H)^n}{2^n} -  \left(g-\frac{1}{2}\right)^n (-H)^n \\
		& = & \left( \frac{(2g-1)^n - 1}{2} \right) D^{n-1}.
	\end{eqnarray*}
	Hence the proof is completed by combining the above two equalities.
\end{proof}

\begin{prop}
	In the above example, if $n=2$, then
	$$
	K_X^2 = 4 \left(\frac{g-1}{g}\right) \chi(X, \omega_X).
	$$
\end{prop}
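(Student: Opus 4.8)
The plan is to specialize Lemma~\ref{chikn} to the case $n = 2$ and then carry out the (elementary) arithmetic simplification. First I would record that when $n = 2$ the base $A$ is an elliptic curve, so $D^{n-1}$ is simply the positive degree of $D$, and the two formulas of Lemma~\ref{chikn} become
\[
\chi(X, \omega_X) = D \sum_{i=1}^{g} (2i-1), \qquad K_X^2 = \big((2g-1)^2 - 1\big)\, D.
\]

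Next I would invoke the closed form $\sum_{i=1}^{g}(2i-1) = g^2$ for the sum of the first $g$ odd integers to rewrite the first expression as $\chi(X, \omega_X) = g^2 D$, and the factorization $(2g-1)^2 - 1 = 4g^2 - 4g = 4g(g-1)$ to rewrite the second as $K_X^2 = 4g(g-1)\, D$. Dividing one by the other yields
\[
K_X^2 = 4g(g-1)\, D = \frac{4(g-1)}{g}\, \big(g^2 D\big) = 4\left(\frac{g-1}{g}\right)\chi(X, \omega_X),
\]
which is the asserted identity.

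I do not expect any genuine obstacle: granting Lemma~\ref{chikn}, the proposition is a one-line consequence whose only inputs are the closed form for the sum of the first $g$ odd integers and a trivial factorization. The point of isolating this statement is rather its significance: it produces, for every $g \ge 2$, a minimal surface with hyperelliptic Albanese fibration of genus $g$ attaining equality in the Horikawa--Persson inequality \eqref{HPX}, and hence shows that \eqref{HPX} --- equivalently, the $n = 2$ case of Theorem~\ref{main2} --- is sharp.
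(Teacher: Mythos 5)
Your proposal is correct and follows essentially the same route as the paper: specialize Lemma~\ref{chikn} to $n=2$, compute $\chi(X,\omega_X)=g^2\deg D$ and $K_X^2=4g(g-1)\deg D$, and compare. The paper simply states these two values and concludes; your version adds the (elementary) intermediate arithmetic, which is fine.
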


\begin{proof}
	Notice that when $n=2$, we have $\chi(X, \omega_X) = g^2 \deg D$ and $K_X^2 = 4g(g-1) \deg D$ by Lemma \ref{chikn}. Hence the result follows.
\end{proof}

In other words, the inequality \eqref{HPX} becomes an equality for the example here. Unfortunately, when $n > 2$, the relation between $K_X^n$ and $\chi(X, \omega_X)$ from this construction will be gradually away from some very plausible expectations.

\begin{prop}
	In the above example, if $n \ge 3$ and $g \le \frac{n+1}{2}$, then 
	$$
	K_X^n < n! \chi(X, \omega_X).
	$$
\end{prop}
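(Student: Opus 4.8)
The plan is to convert the claim into a one-line numerical estimate by feeding in the closed formulas of Lemma \ref{chikn} and then exploiting the hypothesis $g\le\frac{n+1}{2}$ in its sharp form $2g-1\le n$. So the first step is to substitute: by Lemma \ref{chikn} we have $\chi(X,\omega_X)=\frac{D^{n-1}}{(n-1)!}\sum_{i=1}^{g}(2i-1)^{n-1}$, hence $n!\,\chi(X,\omega_X)=n\,D^{n-1}\sum_{i=1}^{g}(2i-1)^{n-1}$, while $K_X^n=\bigl((2g-1)^n-1\bigr)D^{n-1}$. Since $D$ is very ample, $D^{n-1}>0$, so the inequality $K_X^n<n!\,\chi(X,\omega_X)$ is equivalent to the purely arithmetic statement $(2g-1)^n-1<n\sum_{i=1}^{g}(2i-1)^{n-1}$.

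The second step is to prove this arithmetic inequality. I would simply discard all but the top summand on the right: every term $(2i-1)^{n-1}$ is positive, so $n\sum_{i=1}^{g}(2i-1)^{n-1}\ge n(2g-1)^{n-1}$. Setting $m:=2g-1$, the hypothesis $g\le\frac{n+1}{2}$ says exactly $m\le n$, whence $m^n=m\cdot m^{n-1}\le n\,m^{n-1}$, and therefore $m^n-1<m^n\le n\,m^{n-1}\le n\sum_{i=1}^{g}(2i-1)^{n-1}$. Chaining these inequalities finishes the proof.

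I do not expect a genuine obstacle here: all the real content sits in Lemma \ref{chikn}, and once its two formulas are available the proposition reduces to the trivial observation $m^n-1<m^n\le n\,m^{n-1}$ for $m\le n$. The only point worth keeping an eye on is that strictness is maintained even in the boundary case $m=n$ (which happens precisely when $n$ is odd and $g=\frac{n+1}{2}$), but this is automatic because of the harmless $-1$ in $K_X^n=(m^n-1)D^{n-1}$: there $m^n=n\,m^{n-1}$, yet $m^n-1$ is still strictly smaller.
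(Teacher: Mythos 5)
Your proof is correct and takes essentially the same route as the paper: reduce via Lemma \ref{chikn} to the arithmetic inequality $(2g-1)^n - 1 < n\sum_{i=1}^g(2i-1)^{n-1}$ under $2g-1 \le n$, which the paper labels "clear" and you spell out by dropping all but the top summand and using $m^n \le n\,m^{n-1}$ for $m \le n$.
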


\begin{proof}
	By Lemma \ref{chikn}, we only need to check that
	$$
	(2g-1)^n - 1 < n \sum_{i=1}^{g}(2i-1)^{n-1}
	$$
	when $n \ge 2g-1$, and this is clear.
\end{proof}

\begin{remark}
	If we set $g=2$ and $n=3$, then we obtain the  example by Hu in \cite{Hu2016}. From these examples, we also discover that the inequality 
	$$
	K_X^n \ge 2n! \left(\frac{g-1}{g}\right) \chi(X, \omega_X)
	$$
	does not hold under the setting of Theorem \ref{main2} for any $n > 2$.
\end{remark}

\subsection{Remark on the number $\varepsilon$ in Theorem \ref{main}}  \label{epsilon}
Go back to the construction in Section \ref{constructionhu}. As $H$ is contained in the branch locus, we see that $\pi^*H = 2M$ where $M$ is a section of $f: X \to A$. Moreover, as 
$$
2f^*D + 2M = \pi^*(H + 2p^*D),
$$
we deduce that the divisor $f^*D + M$ is nef, which we will denote by $L$.

\begin{prop} \label{surfaceepsilon}
	With the above notation, and let $n = 2$. Then 
	$$
	L^2 = \deg D
	$$
	For any integer $0 < d \le 2g-2$, we have
	$$
	h^0_f(X, \CO_X(dL)) = \left\{ 
	\begin{array}{ll} 
	{\displaystyle \frac{(d+1)^2}{4} \deg D}, & \text{for d odd} \\
	{\displaystyle \frac{d(d+2)}{4} \deg D}, & \text{for d even}.
	\end{array} 
	\right.
	$$
	In particular, when $d$ is odd, 
	$$
	h^0_f(X, \CO_X(dL)) > \left(\frac{1}{4} + \frac{1}{2d}\right) (dL)^2.
	$$
\end{prop}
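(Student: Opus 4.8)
The plan is to compute both quantities by pushing everything down to the abelian variety $A$ (here $n=2$, so $A$ is an elliptic curve and $\deg D$ is the degree of $D$ on it) through the two finite maps $\pi\colon X\to Y$ and $p\colon Y\to A$, and to evaluate the resulting line bundles on $A$.

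First I would record the geometry of $L$ on a general fibre and the intersection numbers on $X$. Since $H$ lies in the branch locus, $\pi^{*}H=2M$ with $M$ a section of $f$, and $M$ meets a general fibre $C$ in a ramification point $w$ of the hyperelliptic map $C\to\PP^{1}$; thus $L|_{C}\equiv w$, $\deg(L|_{C})=1$, and since $w$ is a Weierstrass point, $g^{1}_{2}\equiv 2w$ and $K_{C}\equiv(2g-2)w$. Hence for $1\le d\le 2g-2$ both $dL|_{C}\equiv dw$ and $K_{C}-dL|_{C}\equiv(2g-2-d)w$ are effective, so $dL$ is numerically $f$-special (in fact $f$-special) with $\deg(dL|_{C})=d$, which is what makes the statement meaningful. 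For the self-intersection, $\CO_{H}(H)=\CO_{H}(-2p^{*}D)$ gives $H^{2}=-2\deg D$, hence $M^{2}=\tfrac14(\pi^{*}H)^{2}=-\deg D$; together with $M\cdot f^{*}D=\deg D$ and $(f^{*}D)^{2}=0$ this yields $L^{2}=(M+f^{*}D)^{2}=\deg D$.

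Next I would reduce the continuous rank to $A$. From $2f^{*}D+2M=\pi^{*}(H+2p^{*}D)$ we get $2L=\pi^{*}(H+2p^{*}D)$, so for $\alpha\in\Pic^{0}(A)$: if $d=2k$ then $dL=\pi^{*}(k(H+2p^{*}D))$, while if $d=2k+1$ then $dL=\pi^{*}(kH+(2k+1)p^{*}D)+M$. By the projection formula this expresses $h^{0}(X,dL\otimes f^{*}\alpha)$ as an $h^{0}$ on $Y$ of $\CO_{Y}(\,\cdot\,)\otimes p^{*}\alpha$ tensored with $\pi_{*}\CO_{X}$ (even case) or $\pi_{*}\CO_{X}(M)$ (odd case). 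Since the branch divisor is $H+H'\equiv 2\bigl((g+1)H+(2g+1)p^{*}D\bigr)$, one has $\pi_{*}\CO_{X}=\CO_{Y}\oplus\CO_{Y}\bigl(-(g+1)H-(2g+1)p^{*}D\bigr)$, and a local computation at the ramification divisor $M$ (where $\CO_{X}(M)$ is locally generated by $1/t$ for an anti-invariant coordinate $t$) shows that twisting by $M$ fixes the invariant summand and twists the anti-invariant summand by $\CO_{Y}(H)$, so $\pi_{*}\CO_{X}(M)=\CO_{Y}\oplus\CO_{Y}\bigl(-gH-(2g+1)p^{*}D\bigr)$. Then, using $p_{*}\CO_{Y}(mH)=\bigoplus_{i=0}^{m}\CO_{A}(-2iD)$ for $m\ge0$ (and $0$ for $m<0$), the hypothesis $d\le 2g-2$ forces the ``second summand'' contribution to involve $\CO_{Y}(mH+\cdots)$ with $m\le-2$, hence to vanish; the surviving part is a direct sum $\bigoplus_{j}\CO_{A}(c_{j}D)\otimes\alpha$ with positive exponents $c_{j}\in\{2,4,\dots,2k\}$ (even case) or $c_{j}\in\{1,3,\dots,2k+1\}$ (odd case), plus one extra copy of $\CO_{A}\otimes\alpha$ in the even case. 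Each $\CO_{A}(c_{j}D)$ is ample, so $h^{0}(A,\CO_{A}(c_{j}D)\otimes\alpha)=\chi(\CO_{A}(c_{j}D))=c_{j}\deg D$ for every $\alpha$ by Mumford vanishing, while $h^{0}(A,\CO_{A}\otimes\alpha)=0$ for $\alpha\neq0$. Thus for general $\alpha$ we obtain $\sum_{i=1}^{k}2i\,\deg D=\tfrac{d(d+2)}{4}\deg D$ when $d=2k$, and $\sum_{j=1}^{k+1}(2j-1)\deg D=\tfrac{(d+1)^{2}}{4}\deg D$ when $d=2k+1$. Since $\alpha\mapsto h^{0}(X,dL\otimes f^{*}\alpha)$ is upper semicontinuous, its minimum equals its generic value, which gives the two displayed formulas; and for $d$ odd, $\bigl(\tfrac14+\tfrac1{2d}\bigr)(dL)^{2}=\tfrac{d(d+2)}{4}\deg D<\tfrac{(d+1)^{2}}{4}\deg D=h^{0}_{f}(X,\CO_{X}(dL))$, which is the asserted strict inequality.

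The hard part will be the second step, specifically pinning down the exact twist of the anti-invariant summand of $\pi_{*}\CO_{X}$ by the ramification divisor $M$ — equivalently, coping with the fact that $M$ is only \emph{half} of the pullback $\pi^{*}H$, so that odd multiples $dL$ are genuinely not pulled back from $Y$ and the two parities must be treated separately. Everything else is routine bookkeeping: symmetric powers on the $\PP^{1}$-bundle $Y$, vanishing of $p_{*}$ for negative twists, and Mumford vanishing for ample line bundles on the abelian variety $A$.
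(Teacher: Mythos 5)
Your proposal is correct and follows essentially the same route as the paper: push $dL$ down through $\pi$ and then $p$ to a direct sum of line bundles on the elliptic curve $A$, kill the negative‑twist summands using $d\le 2g-2$, and evaluate the remaining ample pieces via vanishing. The one small difference is in the odd‑$d$ case — the paper argues geometrically that the hyperelliptic structure of the fibres forces $M$ into the fixed part so that $dL$ becomes a pullback from $Y$, whereas you compute $\pi_*\CO_X(M)$ directly by a local analysis at the ramification divisor — but both land on the same sum $\sum_j c_j\deg D$ on $A$, so this is a cosmetic rather than substantive divergence.
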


\begin{proof}
	The first formula is straightforward, because we simply have
	$$
	L^2 = (f^*D+M)^2 = \frac{1}{2} (H+2p^*D)^2 = \deg D.
	$$
	
	Let $C$ be a general fiber of $f$. Then $C$ is hyperelliptic and $2M|_C$ is a $g_2^1$ on $C$. Suppose that $d$ is odd. Then we deduce that $M$ has to be contained in the fixed part $|\CO_X(dL) \otimes f^* \alpha|$ for any $\alpha \in \Pic^0(A)$. By the projection formula, if we let $d-1 = 2e$ and $\alpha$ be general, then
	\begin{eqnarray*}
		h^0_f(X, \CO_X(dL)) & = & h^0_f(X, \CO_X(df^*D + (d-1)M)) \\
		& = & h^0_p(X, \CO_Y(dp^*D + eH)) \\
		& = & \sum_{i=0}^{e} h^0(A, \CO_A((d-2i)D) \otimes \alpha) \\
		& = & \sum_{i=0}^{e} (d-2i) \deg D = \frac{(d+1)^2}{4} \deg D.
	\end{eqnarray*}
	If $d$ is even, similar to the above calculation, we have
	$$
	h^0_f(X, \CO_X(dL)) = \sum_{i=0}^{d/2} h^0(A, \CO_A((d-2i)D) \otimes \alpha) = \frac{d(d+2)}{4} \deg D.
	$$
	The proof is completed.
\end{proof}

\begin{remark}
	Proposition \ref{surfaceepsilon} suggests that in Theorem \ref{main}, we cannot take $\varepsilon = 1$ as long as $d = \deg (L|_C)$ is odd. It also suggests that the inequality in Theorem \ref{main} for surfaces is sharp for any even $d$.
\end{remark}

It is not difficult to generalize the above calculation to any dimension.

\begin{prop} \label{nepsilon}
	With the above notation, and let $n > 2$. Then
	$$
	L^n = D^{n-1}.
	$$
	For any integer $0 < d \le 2g-2$, we have
	$$
	h^0_f(X, \CO_X(dL)) = \left\{ 
	\begin{array}{ll} 
	{\displaystyle \frac{D^{n-1}}{(n-1)!} \sum_{i=0}^{(d-1)/2} (d-2i)^{n-1}}, & \text{for d odd} \\
	{\displaystyle \frac{D^{n-1}}{(n-1)!} \sum_{i=0}^{d/2} (d-2i)^{n-1}}, & \text{for d even}.
	\end{array} 
	\right.
	$$
\end{prop}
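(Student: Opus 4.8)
The plan is to run the proof of Proposition~\ref{surfaceepsilon} in the higher-dimensional setting, the only structural change being that the curve computations on the (elliptic) base $A$ there are replaced by cohomology of ample line bundles on the $(n-1)$-dimensional abelian variety $A$. Throughout I keep the notation of Section~\ref{constructionhu} and Section~\ref{epsilon}: $p\colon Y\to A$ the $\PP^1$-bundle, $\pi\colon X\to Y$ the double cover, $f=p\circ\pi$, $\pi^*H=2M$, $L=f^*D+M$.

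For the first assertion I would use that $2L=f^*(2D)+2M=\pi^*(H+2p^*D)$, so that the projection formula for the degree-$2$ map $\pi$ gives $L^n=2^{1-n}(H+2p^*D)^n$, reducing the claim to an intersection number on $Y$. Exactly as in the displayed computation inside the proof of Lemma~\ref{chikn}, I would expand $(H+2p^*D)^n$ by the binomial theorem and use the relations $H\cdot(H+2p^*D)=0$ (equivalently $\CO_H(H)=\CO_H(-2p^*D)$), $H^n=(-2)^{n-1}D^{n-1}$, and $(p^*D)^n=0$. A short rearrangement of binomial coefficients gives $(H+2p^*D)^n=2^{n-1}D^{n-1}$, hence $L^n=D^{n-1}$.

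For the continuous rank I would first note that, by the projection formula, $h^0_f(X,\CO_X(dL))=\min_{\alpha\in\Pic^0(A)}h^0(A,f_*\CO_X(dL)\otimes\alpha)$, and then compute $f_*\CO_X(dL)=p_*\pi_*\CO_X(dL)$. Set $\mathcal L=\CO_Y((g+1)H+(2g+1)p^*D)$, the line bundle with $\mathcal L^{\otimes2}=\CO_Y(H+H')$ defining the cover, so $\pi_*\CO_X=\CO_Y\oplus\mathcal L^{-1}$. When $d=2e$ is even one has $dL=\pi^*(eH+dp^*D)$, so $\pi_*\CO_X(dL)=\CO_Y(eH+dp^*D)\otimes(\CO_Y\oplus\mathcal L^{-1})$; the $\mathcal L^{-1}$-summand has $H$-degree $e-g-1\le-2$ (this is where $d\le 2g-2$ enters), hence dies under $p_*$, while $p_*\CO_Y(eH+dp^*D)=\mathrm{Sym}^e(\CO_A\oplus\CO_A(-2D))\otimes\CO_A(dD)=\bigoplus_{j=0}^{e}\CO_A((d-2j)D)$. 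When $d=2e+1$ is odd one writes $dL=\pi^*(eH+dp^*D)+M$, so $\pi_*\CO_X(dL)=\CO_Y(eH+dp^*D)\otimes\pi_*\CO_X(M)$, and the extra ingredient is $\pi_*\CO_X(M)$; this is the one genuinely non-routine point, since $\CO_X(M)$ is not pulled back from $Y$ (only $\CO_X(2M)=\pi^*\CO_Y(H)$ is). To compute it I would use that $M$ is the reduced preimage of the branch component $H$ and is fixed pointwise by the covering involution $\iota$, so $\pi_*\CO_X(M)$ splits into $\iota$-eigensheaves; a local model $z^2=t$ with $t$ a local equation of $H$ identifies the result as $\pi_*\CO_X(M)=\CO_Y\oplus\mathcal L^{-1}\otimes\CO_Y(H)=\CO_Y\oplus\CO_Y(-gH-(2g+1)p^*D)$. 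Tensoring by $\CO_Y(eH+dp^*D)$ and applying $p_*$ again kills the negative-$H$-degree summand ($e-g\le-2$), leaving $f_*\CO_X(dL)=\bigoplus_{j=0}^{(d-1)/2}\CO_A((d-2j)D)$.

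In either parity $f_*\CO_X(dL)$ is thus an explicit direct sum of line bundles $\CO_A(kD)$ with $k=d,d-2,d-4,\dots$. Since $D$ is very ample, each $\CO_A(kD)\otimes\alpha$ with $k\ge1$ is ample for every $\alpha\in\Pic^0(A)$, so its higher cohomology vanishes and $h^0(A,\CO_A(kD)\otimes\alpha)=\chi(A,\CO_A(kD))=k^{n-1}D^{n-1}/(n-1)!$, independently of $\alpha$; the summand $\CO_A$ occurring when $d$ is even (the term $j=d/2$) contributes $0$ for $\alpha$ general. Summing over the relevant $j$ yields the two displayed formulas, the convention $0^{n-1}=0$ absorbing the $j=d/2$ term in the even case. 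The only step demanding real care is the identification of $\pi_*\CO_X(M)$ via the eigensheaf decomposition of the double cover; everything else is bookkeeping with symmetric powers on the $\PP^1$-bundle and Kodaira vanishing on $A$.
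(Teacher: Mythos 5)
Your proof is correct, and the overall strategy — compute $L^n$ from the relation $H\cdot(H+2p^*D)=0$ exactly as in Lemma~\ref{chikn}, then compute $f_*\CO_X(dL)$ by pushing forward through the double cover $\pi$ and the $\PP^1$-bundle $p$ and invoking vanishing on $A$ — is the paper's. The only point at which you diverge is the odd-$d$ case. The paper (in the proof of Proposition~\ref{surfaceepsilon}, to which the proof of Proposition~\ref{nepsilon} defers) observes geometrically that since $C$ is hyperelliptic with $2M|_C$ a $g^1_2$, the divisor $M$ lies in the fixed part of $|\CO_X(dL)\otimes f^*\alpha|$ for every $\alpha$ when $d$ is odd, reducing to $\CO_X(dL-M)=\pi^*\CO_Y(eH+dp^*D)$ and then pushing forward as in the even case. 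You instead compute $\pi_*\CO_X(M)=\CO_Y\oplus\mathcal L^{-1}\otimes\CO_Y(H)$ directly by the eigensheaf decomposition of the double cover and the local model $z^2=t$; that identity is correct, and tensoring with $\CO_Y(eH+dp^*D)$ gives a second summand of $H$-degree $e-g\le-2$, which indeed has no $H^0$ after $p_*$. Both routes isolate the same invariant summand $\CO_Y(eH+dp^*D)$, so the discrepancy is only in where the base-locus/anti-invariant bookkeeping is done — geometrically via the hyperelliptic pencil in the paper, sheaf-theoretically in your version. Everything else (the symmetric-power computation of $p_*$, the use of Kodaira vanishing and Riemann–Roch on $A$, and the observation that the $j=d/2$ summand $\CO_A\otimes\alpha$ has no sections for general $\alpha$) matches the paper.
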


\begin{proof}
	Let $\alpha$ be a general element in $\Pic^0(A)$. Similar to the proof in Proposition \ref{surfaceepsilon}, when $d$ is odd, we have 
	\begin{eqnarray*}
		h^0_f(X, \CO_X(dL)) & = & \sum_{i=0}^{(d-1)/2} h^0(A, \CO_A((d-2i)D) \otimes \alpha) \\
		& = & \frac{D^{n-1}}{(n-1)!} \sum_{i=0}^{(d-1)/2} (d-2i)^{n-1}.
	\end{eqnarray*}
	In the last step of the above, we have intrinsically used the Kodaira vanishing theorem. A similar calculation will give the formula for $d$ even.
	
	On the other hand, since
	$(H + 2p^*D)^n = 2^{n-1}D^{n-1}$, we have
	$$
	L^n = 2 \left(\frac{H + 2p^*D}{2} \right)^{n} = D^{n-1}.
	$$
	Hence the proof is completed.
\end{proof}

\begin{remark}
	In Proposition \ref{nepsilon}, when $d=1$, we have
	$$
	h^0_f(X, \CO_X(L)) = \left(\frac{1}{2n!} + \frac{n - \frac{1}{2}}{n!}\right) L^n.
	$$
	This means that at least for this particular case, we have to take $\varepsilon = \frac{1}{2}$ in Theorem \ref{main}. In the case when $d=2$, we get
	$$
	(2L)^n = 2(n-1)! h^0_f(X, \CO_X(2L)).
	$$
	This implies that the inequality in Corollary \ref{2n-1!} is sharp.
\end{remark}

\begin{remark} \label{abigness}
	From Proposition \ref{nepsilon}, it is easy to see that
	$$
	h^0_f(X, \CO_X(2L)) = h^0_f(X, \CO_X(2f^*D)).
	$$
	In other words, it implies that for any general (in fact, any nonzero) $\alpha \in \Pic^0(A)$, we always have the following decomposition of linear systems
	$$
	|\CO_X(2L) \otimes f^* \alpha| = |\CO_X(2f^*D) \otimes f^* \alpha| + 2M 
	$$
	on $X$, where $2M$ is the fixed part of $|\CO_X(2L) \otimes f^* \alpha|$. In particular, the moving part of $|\CO_X(2L) \otimes f^* \alpha|$ is not $f$-big. Notice that in this case, the linear system $|2L|$ itself is even base point free!
\end{remark}

\subsection{Remark on the nefness assumption in Theorem \ref{main}} \label{nefnessremark}

We keep using the setting in Section \ref{epsilon}. That is, let $\pi^*H = 2M$ where $M$ is a section of $f: X \to A$. Now we consider another divisor
$$
L = f^*D + dM
$$
where $2 \le d \le 2g-2$. From \eqref{KX}, we deduce that $K_X - L$ is (linear equivalent to) an effective divisor. Moreover, $L$ is not nef, because $2L = \pi^*(2p^*D + dH)$ and $2p^*D + dH$ is not nef on $Y$. Also, it is easy to see that for any integer $m > 0$,
$$
|2mL| = |2m(f^*D + M)| + 2(d-1)mM,
$$
where $2(d-1)mM$ is the fixed part of $|2mL|$. This implies that 
$$
\vol(L) = \vol(f^*D + M) = D^{n-1} = (n-1)! h^0_f(X, \CO_X(L))
$$
by Proposition \ref{nepsilon}. 

We may consider another divisor
$$
L' = 2f^*D + dM.
$$
If $3 \le d \le 2g-2$ (a priori, $g > 2$), then $K_X-L'$ is also effective but $L'$ is not nef. Similar to the above example, we deduce that
$$
\vol(L') = \vol(2f^*D + 2M) = 2^nD^{n-1} = 2(n-1)! h^0_f(X, \CO_X(L')).
$$

In both examples, the divisor $L$ and $L'$ on $X$ violate the inequality in Theorem \ref{main} due to their non-nefness.

\subsection{Unboundedness of $K^n_X/\chi(\omega_X)$ from above} \label{unbounded}

In this subsection, let $Y = Z \times \mathbb{P}^1$ be the trivial $\mathbb{P}^1$-bundle over $Z$. Instead of an abelian variety, here we assume that $Z$ is a normal and minimal variety of general type and of maximal Albanese dimension with $\dim Z = n-1$. Denote by $p_1$ and $p_2$ the two natural projections from $Y$ to $Z$ and $\mathbb{P}^1$, respectively.

Pick any nef and big divisor $L_1$ on $Z$ with $|L_1|$ base point free, and also pick any effective divisor $L_2$ on $\mathbb{P}^1$ of degree $g+1$. Let $\pi: X \to Y$ be the double cover branched along an divisor $H$ on $Y$, where $H \in |2p_1^*L_1 + 2p_2^* L_2|$. Similar to the previous example, it is easy to show that $X$ is of general type.

We claim that $p_2 \circ \pi: X \to Z$ is just the fibration of genus $g$ curves induced by the Albanese map of $X$. To show this, we only need to prove that
$$
h^1(X, \CO_X) = h^1(Z, \CO_Z).
$$
This is straightforward, because by the projection formula, we have
$$
h^1(X, \CO_X) = h^1(Y, \CO_Y) + h^1(Y, \CO_Y(-p_1^*L_1 - p_2^*L_2)).
$$
Since $h^1(Y, \CO_Y) = h^1(Z, \CO_Z)$ and $h^1(Y, \CO_Y(-p_1^*L_1 - p_2^*L_2)) = 0$ by the Kawamata-Viehweg vanishing theorem, our claim holds.

Similar to the previous calculation, it is easy to show that
$$
K^n_X = 2 (K_Y+p_1^*L_1 + p_2^*L_2)^n = 2n(g-1)(K_Z+L_1)^{n-1}.
$$
In the meantime, we also have
\begin{eqnarray*}
	\chi(X, \omega_X) & = & \chi(X, \omega_Y) + \chi(Y, \omega_Y \otimes \CO_Y(p_1^*L_1 + p_2^*L_2)) \\
	& = & - \chi(Z, \omega_Z) + g \chi(Z, \omega_Z \otimes \CO_Z(L_1) ) \\
	& \le & g \chi(Z, \omega_Z \otimes \CO_Z(L_1) ) .
\end{eqnarray*}
Here we use the fact that $\chi(Z, \omega_Z) \ge 0$ (see \cite{Ein_Lazarsfeld} for example). However, by a very recent result of Barja-Pardini-Stoppino \cite[Example 8.4 and 8.5]{Barja_Pardini_Stoppino}, when $n-1 \ge 3$, the ratio $(K_Z+L_1)^{n-1} / \chi(Z, \omega_Z \otimes \CO_Z(L_1) )$ can be arbitrarily large.

\section{Proof of Corollary \ref{2n-1!}} \label{proofcoro}

In this section, we will give a proof of Corollary \ref{2n-1!}. Resume the notation in Theorem \ref{main}. Suppose from now that
$$
L^n = 2(n-1)! h^0_f(X, \CO_X(L)) > 0.
$$ 
Then $d = 2$ by Theorem \ref{main}. In the following, we only need to prove that in this case, $Y$ is birational to an abelian variety. 

\subsection{Set up} Similar to the proof of Theorem \ref{clifford}, we consider the following commutative diagram
$$
\xymatrix{
	X_{[k]} \ar[r]^{\phi_{[k]}} \ar[d]_{f_{[k]}} 
	\ar@/_2.5pc/[dd]_{a_{[k]}} & X \ar[d]^f  \ar@/^2.5pc/[dd]^a \\
	Y_{[k]} \ar[r]^{\psi_{[k]}} \ar[d]_{b_{[k]}} & Y \ar[d]^b \\
	A \ar[r]^{[k]} & A}
$$
where every notation is exactly the same as that in the proof of Theorem \ref{clifford}. Write $L_{[k]} = \phi^*_{[k]} L$. By Corollary \ref{h0acomparison} and Lemma \ref{h0fh0a}, we have
\begin{eqnarray*}
	L_{[k]}^n = k^{2 \dim A}L^n & = & 2k^{2 \dim A} (n-1)! h^0_f(X, \CO_X(L)) \\
	& = & 2(n-1)! h^0_{a_{[k]}}(X_{[k]}, \CO_{X_{[k]}}(L_{[k]})).
\end{eqnarray*}

Since $L^n > 0$, we may assume that $L_{[k]}$ is effective by replacing $\CO_{X_{[k]}}(L_{[k]})$ by its tensor with $a^*_{[k]} \alpha$, where $\alpha \in \Pic^0(A)$ is general. Notice that we have 
$$
h^0_{a_{[k]}}(X_{[k]}, \CO_{X_{[k]}}(L_{[k]})) = h^0(X_{[k]}, \CO_{X_{[k]}}(L_{[k]})) \sim k^{2 \dim A}.
$$

Let $G \ge 0$ be a sufficiently ample divisor on $A$ and let $H_{[k]} = b^*_{[k]}G$. As the main result in this step, we claim that 
$$
h^0(X_{[k]}, \CO_{X_{[k]}} (L_{[k]} - f^*_{[k]} H_{[k]} )) > 0
$$
when $k$ is sufficiently large.

To prove this, it is enough to prove that 
$$
h^0(Y_{[k]}, \CO_{Y_{[k]}} (H_{[k]})) \sim o(k^{2 \dim A}).
$$
In fact, it is easy to see that the complete linear system $|H_{[k]}|$ defines a generically finite morphism on $Y_{[k]}$, since the sub-linear system $b^*_{[k]}|G|$ has already defined a generically finite morphism. Therefore, we have
$$
h^0(Y_{[k]}, \CO_{Y_{[k]}} (H_{[k]})) - (n-1) \le H^{n-1}_{[k]} =  G^{n-1} \deg (b_{[k]}) =  G^{n-1} \deg (b).
$$
Thus the proof of the claim is completed. 

Consider the rational map
$$
\phi_{L_{[k]}}: X_{[k]} \dashrightarrow \PP^{h^0(X_{[k]}, \CO_{X_{[k]}}(L_{[k]})) - 1}
$$
defined by the complete linear system $|L_{[k]}|$. Then the above claim implies that $a_{[k]}$ must factor through $\phi_{L_{[k]}}$. In particular, 
$$
\dim \phi_{L_{[k]}} (X_{[k]}) \ge n-1.
$$
The proof will be divided to two cases subject to this dimension.

In the following, by abuse of the notation, we denote also by $C$ a general fiber of $f_{[k]}$, because it is isomorphic to a general fiber of $f$.

\subsection{Case I} In this subsection, we assume that $\dim \phi_{L_{[k]}} (X_{[k]}) = n - 1$ for any $k$ sufficiently large. 

Since $a_{[k]}$ factors through $\phi_{L_{[k]}}$, we deduce that via Stein factorization, $\phi_{L_{[k]}}$ and $a_{[k]}$ must factor through the same map with connected fibers, which is nothing but $f_{[k]}$. By \cite[Theorem 3.3]{Barja_Severi}, choosing an integer $k$ sufficiently large and blowing up $X_{[k]}$ and $Y_{[k]}$ if necessary, we can write
$$
L_{[k]} = f^*_{[k]} M + Z
$$
such that for any general $\alpha \in \Pic^0(A)$, the line bundle $\CO_{Y_{[k]}}(M) \otimes b^*_{[k]} \alpha$ is globally generated and
$$
h^0_{a_{[k]}}(X_{[k]}, \CO_{X_{[k]}}(L_{[k]})) = h^0(Y_{[k]}, \CO_{Y_{[k]}}(M) \otimes b^*_{[k]} \alpha) = h^0_{b_{[k]}}(Y_{[k]}, \CO_{Y_{[k]}}(M)).
$$

Since $b_{[k]}$ is generically finite, by \cite[Main Theorem (c)]{Barja_Severi}, we have 
$$
M^{n-1} \ge (n-1)! h^0_{b_{[k]}}(Y_{[k]}, \CO_{Y_{[k]}}(M)).
$$
On the other hand, notice that $\deg(Z|_C) = d = 2$. We deduce that 
\begin{eqnarray*}
	2 (n-1)! h^0_{a_{[k]}}(X_{[k]}, \CO_{X_{[k]}}(L_{[k]})) = L_{[k]}^n \ge L_{[k]}(f^*_{[k]}M)^{n-1} = (f^*_{[k]}M)^{n-1} Z = 2 M^{n-1},
\end{eqnarray*}
i.e.,
$$ 
(n-1)! h^0_{b_{[k]}}(Y_{[k]}, \CO_{Y_{[k]}}(M)) \ge M^{n-1}.
$$
As a result, we obtain that
$$
M^{n-1} = (n-1)! h^0_{b_{[k]}}(Y_{[k]}, \CO_{Y_{[k]}}(M)).
$$
Finally, the claim in the previous subsection actually implies that $M$ is also big, because $H_{[k]}$ is big and $M-H_{[k]}$ is effective. By a very recent result \cite[Theorem 7.1]{Barja_Pardini_Stoppino} of Barja-Pardini-Stoppino, we know that $b_{[k]}$ is birational. So is $b$. Therefore, $Y$ is birational to an abelian variety.

\subsection{Case II} In this subsection, we assume that $\dim \phi_{L_{[k]}} (X_{[k]}) = n$ for some $k$. 

In this case, $\phi_{L_{[k]}}(C)$ must have dimension one. Notice that $\deg(L_{[k]}|_C) = 2$ and $g(C) \ge 2$. We conclude that $C$ is hyperelliptic and $\phi_{L_{[k]}}$ induces a double cover from $C$ to $\PP^1$. In particular, $\deg (\phi_{L_{[k]}}) \ge 2$.

Let $\Sigma \subseteq \PP^{h^0(X_{[k]}, \CO_{X_{[k]}}(L_{[k]})) - 1}$ be the image of $X$ under $\phi_{L_{[k]}}$. By factoring through the Stein factorization, we may assume that $\Sigma$ is normal. Replacing $\Sigma$ by a resolution of singularities and then blowing up $X_{[k]}$ accordingly, we may assume further that $\Sigma$ is smooth and $\phi_{L_{[k]}}$ is a morphism. Moreover, we can write
$$
|L_{[k]}| = |M| + Z,
$$
where the movable part $|M|$ of $|L_{[k]}|$ is base point free and $M = \phi_{L_{[k]}}^*N$ for a divisor $N$ on $\Sigma$. 

Recall that $a_{[k]}$ factors through $\phi_{L_{[k]}}$. We write $c: \Sigma \to A$ such that $a_{[k]} = c \circ \phi_{L_{[k]}}$. Then $c^*: \Pic^0(A) \to \Pic^0(\Sigma)$ is injective. Similar to the previous case, we may also assume that up to tensoring with a general $\alpha \in \Pic^0(a)$, we have
$$
h^0_c(\Sigma, \CO_{\Sigma}(N)) = h^0(X_{[k]}, \CO_{X_{[k]}}(M) \otimes a_{[k]}^* \alpha) = h^0(X_{[k]}, \CO_{X_{[k]}}(L_{[k]})).
$$
Thus it follows that 
$$
2 (n-1)! h^0_c(\Sigma, \CO_{\Sigma}(N)) = 2(n-1)! h^0(X_{[k]}, \CO_{X_{[k]}}(L_{[k]}))= L_{[k]}^n \ge M^n \ge 2N^n,
$$
i.e.,
$$
N^n \le (n-1)! h^0_c(\Sigma, \CO_{\Sigma}(N)).
$$

By Bertini's theorem, we choose a smooth subvariety $W \in |N|$ of dimension $n-1$ on $\Sigma$. Then $c^*:\Pic^0(A) \to \Pic^0(W)$ is also injective. In fact, we have proved that $\Sigma$ is birational to a $\PP^1$-bundle and $W$ has degree one when restricted on a general fiber of this $\PP^1$-bundle. With all these notation, we consider a similar commutative diagram as follows:
$$
\xymatrix{
	W_m \ar[r]^{p_m} \ar@{^{(}->}[d] 
	\ar@/_2.5pc/[dd]_{\iota_m} & W \ar@{^{(}->}[d] \ar@/^2.5pc/[dd]^{\iota} \\
	\Sigma_m \ar[r]^{q_m} \ar[d]_{c_m} & \Sigma \ar[d]^c \\
	A \ar[r]^{[m]} & A}
$$
where $[m]: A \to A$ is the multiplication map by $m$, $\Sigma_m = \Sigma \times_{[m]} A$ and $W_m = W \times_{[m]} A$. Let $N_m = q_m^* N$. Notice that $W_m \in |N_m|$ is also smooth and irreducible. By \cite[Main Theorem (c)]{Barja_Severi} again, we deduce that for any $m$,
\begin{eqnarray*}
	N_m^n = (N_m|_{W_m})^{n-1} & \ge & (n-1)! h^0_{\iota_m}(W_m, \CO_{W_m}(N_m)) \\
	& \ge & (n-1)! \left( h^0_{c_m}(\Sigma_m, \CO_{\Sigma_m}(N_m)) - 1 \right).
\end{eqnarray*}
Apply Proposition \ref{h0acomparison} as well as the previous result. Then the above inequality is equivalent to
\begin{eqnarray*}
	(n-1)! h^0_c(\Sigma, \CO_{\Sigma}(N)) \ge  (N|_W)^n & \ge & (n-1)! h^0_\iota(W, \CO_{W}(N)) \\ 
	& \ge & (n-1)! \left( h^0_c(\Sigma, \CO_{\Sigma}(N)) - \frac{1}{m^{2 \dim A}} \right).
\end{eqnarray*}
Take the limit as $m \to \infty$. We deduce that all inequalities above should be equalities. In particular,
$$
N^n = (N_W)^{n-1} = (n-1)! h^0_\iota(W, \CO_{W}(N)) = (n-1)! h^0_c(\Sigma, \CO_{\Sigma}(N)).
$$
By \cite[Theorem 7.1]{Barja_Pardini_Stoppino} again, we deduce that $\iota$ is birational. Meanwhile, all equalities force $\deg (\phi_{L_{[k]}}) = 2$. This means that $\phi_{L_{[k]}}$ has to separate any two general fibers of $f_{[k]}$, because $\phi_{L_{[k]}}$ has already induced a double cover on each general fiber. As a result, we deduce that not only $a_{[k]}$, but also $f_{[k]}$ should factor through $\phi_{L_{[k]}}$, i.e., we have the following commutative diagram
$$
\xymatrix{
	& & X_{[k]} \ar[d]^{f_{[k]}} \ar[ld]_{\phi_{L_{[k]}}} \ar@/^2.5pc/[dd]^{a_{[k]}}\\
	W \ar@{^{(}->}[r] \ar[rrd]_{\iota} & \Sigma \ar[r] \ar[rd]^{c} & Y_{[k]} \ar[d]^{b_{[k]}} \\
	& & A
}
$$
As $\iota$ is birational, we know that $b_{[k]}$ is also birational, so is $b$. This completes the whole proof of Corollary \ref{2n-1!}.

\subsection{Remark on the case $d=1$} Similar to Corollary \ref{2n-1!}, we also have the following result. 

\begin{coro} \label{n-1!}
	Under the setting in Theorem \ref{main}, we have a sharp inequality
	$$
	L^n \ge (n-1)! h^0_f(X, \CO_X(L)).
	$$
	
	Suppose that the equality holds for $L$ and $h^0_f(X, \CO_X(L)) > 0$ (or equivalently, $L$ is big). Then $d=1$ and $Y$ is birational to an abelian variety of dimension $n-1$.
\end{coro}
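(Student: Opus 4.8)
The plan is to obtain the inequality directly from Theorem~\ref{main} and Corollary~\ref{2n-1!}, and then to read off the equality case from a streamlined version of the argument in Section~\ref{proofcoro}. For the inequality: if $d\ge 2$, then Corollary~\ref{2n-1!}(i) already gives $L^n\ge 2(n-1)!\,h^0_f(X,\CO_X(L))\ge (n-1)!\,h^0_f(X,\CO_X(L))$; and if $d=1$, then Theorem~\ref{main} forces $\varepsilon=\tfrac12$, so that its right-hand side equals $\bigl(\frac{1}{2n!}+\frac{n-1/2}{n!}\bigr)L^n=\frac{n}{n!}L^n=\frac{1}{(n-1)!}L^n$, which is precisely the asserted bound. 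Sharpness is witnessed by the examples of Section~\ref{epsilon}: for the nef divisor $L=f^*D+M$ with $\deg(L|_C)=1$ over the abelian base, Proposition~\ref{nepsilon} (and Proposition~\ref{surfaceepsilon} when $n=2$) gives $h^0_f(X,\CO_X(L))=\frac{1}{(n-1)!}L^n$.

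Now suppose $L^n=(n-1)!\,h^0_f(X,\CO_X(L))>0$. I would first show $d=1$: if $d\ge2$, then Corollary~\ref{2n-1!}(i) yields the strict inequality $L^n\ge 2(n-1)!\,h^0_f(X,\CO_X(L))>(n-1)!\,h^0_f(X,\CO_X(L))$, a contradiction. With $d=1$ in hand, I would set up exactly as in Section~\ref{proofcoro}: let $A$ be the Albanese variety of $Y$, $a=\alb_Y\circ f$, and form the tower $X_{[k]}\to Y_{[k]}\to A$ attached to the multiplication maps $[k]\colon A\to A$, with $L_{[k]}=\phi_{[k]}^*L$. By Corollary~\ref{h0acomparison} and Lemma~\ref{h0fh0a} one has $L_{[k]}^n=k^{2\dim A}L^n=(n-1)!\,h^0_{a_{[k]}}(X_{[k]},\CO_{X_{[k]}}(L_{[k]}))$, and after twisting by a general element of $\Pic^0(A)$ we may take $L_{[k]}$ effective with $h^0_{a_{[k]}}(X_{[k]},\CO_{X_{[k]}}(L_{[k]}))=h^0(X_{[k]},\CO_{X_{[k]}}(L_{[k]}))$. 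The same estimate on $h^0(Y_{[k]},\CO_{Y_{[k]}}(H_{[k]}))$ as there (with $H_{[k]}=b_{[k]}^*G$ for $G$ sufficiently ample on $A$) shows $L_{[k]}-f_{[k]}^*H_{[k]}$ is effective for $k\gg0$, so the morphism $\phi_{L_{[k]}}$ defined by $|L_{[k]}|$ factors $a_{[k]}$ and $\dim\phi_{L_{[k]}}(X_{[k]})\ge n-1$.

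The point where the $d=1$ case becomes simpler than Section~\ref{proofcoro} is this: since $d=\deg(L|_C)=1$ and $g(C)\ge2$, the divisor $L|_C$ is effective of degree one on a curve of positive genus, hence $h^0(C,L|_C)=1$; therefore $\phi_{L_{[k]}}$ contracts the general fibre $C$ of $f_{[k]}$, giving $\dim\phi_{L_{[k]}}(X_{[k]})\le n-1$. Thus $\dim\phi_{L_{[k]}}(X_{[k]})=n-1$ and $\phi_{L_{[k]}}$ factors through $f_{[k]}$ --- we are always in the situation of ``Case I'' of Section~\ref{proofcoro}, and no analogue of its ``Case II'' (hyperelliptic $C$, with $\phi_{L_{[k]}}$ inducing a double cover of $\PP^1$ on each fibre) can occur. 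I would then follow that case verbatim: \cite[Theorem 3.3]{Barja_Severi} lets one write $L_{[k]}=f_{[k]}^*M+Z$ (after blow-ups) with $\CO_{Y_{[k]}}(M)\otimes b_{[k]}^*\alpha$ globally generated for general $\alpha$ and $h^0_{a_{[k]}}(X_{[k]},\CO_{X_{[k]}}(L_{[k]}))=h^0_{b_{[k]}}(Y_{[k]},\CO_{Y_{[k]}}(M))$; $b_{[k]}$ is generically finite, so \cite[Main Theorem (c)]{Barja_Severi} gives $M^{n-1}\ge(n-1)!\,h^0_{b_{[k]}}(Y_{[k]},\CO_{Y_{[k]}}(M))$; and $\deg(Z|_C)=d=1$, whence
\[
L_{[k]}^n\ \ge\ L_{[k]}(f_{[k]}^*M)^{n-1}\ =\ (f_{[k]}^*M)^{n-1}Z\ =\ M^{n-1}.
\]
Comparing with $L_{[k]}^n=(n-1)!\,h^0_{a_{[k]}}(X_{[k]},\CO_{X_{[k]}}(L_{[k]}))=(n-1)!\,h^0_{b_{[k]}}(Y_{[k]},\CO_{Y_{[k]}}(M))$ forces equality throughout, so $M^{n-1}=(n-1)!\,h^0_{b_{[k]}}(Y_{[k]},\CO_{Y_{[k]}}(M))$ with $M$ big (as $M-H_{[k]}$ is effective and $H_{[k]}$ big). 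Then \cite[Theorem 7.1]{Barja_Pardini_Stoppino} gives that $b_{[k]}$ is birational, hence so is $\alb_Y$; since $\alb_Y(Y)$ generates $A$ it must equal $A$, so $\dim A=\dim Y=n-1$ and $Y$ is birational to an abelian variety of dimension $n-1$. I expect the only genuinely new point beyond Section~\ref{proofcoro} to be the structural observation $h^0(C,L|_C)=1$ that collapses the case division there to its first case; everything afterwards is a transcription of that case with $\deg(Z|_C)=1$ in place of $2$, resting on \cite[Theorem 7.1]{Barja_Pardini_Stoppino} as the essential external input.
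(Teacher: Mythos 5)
Your proof is correct and follows the paper's intended argument: the inequality is read off from Theorem~\ref{main} (or via Corollary~\ref{2n-1!}(i) for $d\ge 2$), $d=1$ is forced in the equality case, and the characterization is obtained by rerunning Case I of the proof of Corollary~\ref{2n-1!} with $\deg(Z|_C)=1$; your observation that $h^0(C,L|_C)=1$ when $d=1$ is precisely what pins $\dim\phi_{L_{[k]}}(X_{[k]})=n-1$ and eliminates the analogue of Case II, which is the one detail the paper leaves to the reader.
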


The proof is almost the same as Case I in the proof of Corollary \ref{2n-1!}. The inequality itself is directly by Theorem \ref{main}. If the equality holds, then $d=1$ is straightforward, and it implies that $\dim \phi_{L_{[k]}} (X_{[k]}) = n-1$ when $k$ is sufficiently large. Therefore, the proof of Case I applies here almost identically, and the only essential modification we need to make is to set $\deg(Z|_C) = d = 1$ accordingly. We leave the proof to the interested reader.

\section{A question of Reid's type for irregular varieties} \label{question}

We would like to end this paper by raising the following question regarding the geography of irregular varieties of general type and of \emph{non-maximal} Albanese dimension. 

\begin{ques} \label{finalquestion}
	Let $n > 1$ be an integer. For any fixed integer $0 < k < n$, is there a sequence of rational numbers $\{a_{k, m}\}_{m \in \mathbb{N}}$ with the following three properties?
	\begin{itemize}
		\item[(i)] $0 < a_{k, 1} < a_{k, 2} < \cdots$;
		\item[(ii)] $\lim_{m \to \infty} a_{k, m} = 2(k+1)!$;
		\item[(iii)] For every smooth $n$-dimensional variety $X$ of general type and of Albanese dimension $k$, if $\vol(K_X) < a_{k, m} \chi(X, \omega_X)$, then a general Albanese fiber of $X$ has volume at most $m$.
	\end{itemize}
    %A variant condition of (iii) is the following:
    %\begin{itemize}
    	%\item[(iii')] For every smooth $n$-dimensional variety $X$ of general type and of Albanese dimension $k$, if $\vol(K_X) < a_{k, m} h^0_{\alb_X}(X, \omega_X)$, then a general Albanese fiber of $X$ has volume at most $m$.
    %\end{itemize}
\end{ques}

It is clear that Question \ref{finalquestion} is an analogue of Reid's conjecture in \cite{Reid_Quadrics} for irregular varieties in any dimension. The main difference is that we can add in here the information about the Albanese dimension of $X$ to make the question look a bit more elaborate. 

The following is a list of results known to us towards this question by far:
\begin{enumerate}
	\item The inequality \eqref{HPX} by Horikawa \cite{Horikawa_V}, Persson \cite{Persson_Chern} and Xiao \cite{Xiao_Slope} provides an affirmative answer for $n=2$ (where $k=1$);
	
	\item Theorem \ref{main4} is an affirmative answer for $n=3$ (where $k=1$ or $2$);
	
	\item Theorem \ref{main2} is an affirmative answer when $k=n-1$.
\end{enumerate}
It should be interesting to investigate this question in more cases.

%References

\bibliography{Relative_Clifford_inequality_v11}
\bibliographystyle{amsplain}

\end{document}